\numberwithin{equation}{section}
\newtheorem{theorem}{Theorem}[section]
\newtheorem{corollary}[theorem]{Corollary}
\newtheorem{lemma}[theorem]{Lemma}
\theoremstyle{definition}
\newtheorem{definition}[theorem]{Definition}
\newtheorem{quest}[theorem]{Question}
\newtheorem{prob}[theorem]{Problem}
\theoremstyle{definition}
\newtheorem{example}[theorem]{Example}
\newtheorem{remark}[theorem]{Remark}
\DeclareMathOperator{\Ann}{Ann}
\DeclareMathOperator{\Ass}{Ass}
\DeclareMathOperator{\depth}{depth}
\DeclareMathOperator{\Min}{Min}
\DeclareMathOperator{\reg}{reg}
\DeclareMathOperator{\link}{link}
\DeclareMathOperator{\supp}{supp}
\DeclareMathOperator{\starop}{star}
\DeclareMathOperator{\ord}{ord}
\newcommand{\ZZ}{{\mathbb Z}}
\newcommand{\NN}{{\mathbb N}}
\newcommand{\RR}{{\mathbb R}}
\def\kk{{\Bbbk}}
\def\mm{{\mathfrak m}}
\def\pp{{\mathfrak p}}
\def\qq{{\mathfrak q}}
\def\a{{\bold a}}
\def\b{{\bold b}}
\def\c{{\bold c}}
\def\e{{\bold e}}
\def\0{{\bold 0}}
\def\x{{\bold x}}
\def\F{\mathcal{F}}
\def\G{\mathcal{G}}
\begin{document}
	
	\title{Relative Hochster--Takayama formula and Cohen--Macaulay monomial ideal quotients}
	
	\author{T\`ai Huy H\`a}
	\address{Tulane University \\ Mathematics Department \\
		6823 St. Charles Ave. \\ New Orleans, LA 70118, USA}
	\email{tha@tulane.edu}
	
	\author{Nguyen Cong Minh}
	\address{Faculty of Mathematics and Informatics \\ Hanoi University of Science and Technology \\
		1 Dai Co Viet \\ Hanoi, Vietnam}
	\email{minh.nguyencong@hust.edu.vn}

	\keywords{monomial ideals, quotients, Cohen-Macaulay, Hochster formula, Takayama formula, degree complex, relative simplicial complex}
	\subjclass[2020]{13F55, 55U10}
	
	\begin{abstract}
		Hochster's and Takayama's formulas describes the multigraded components of local cohomology modules of monomial ideals in terms of simplicial complexes. In this paper, we develop a relative version of these formulas for quotients $I/J$ of monomial ideals, expressing the multigraded pieces of local cohomology modules of $I/J$ as reduced relative (co)homology of pairs of degree complexes. As an application, we obtain a relative Reisner criterion characterizing Cohen-Macaulay monomial ideal quotients. We further apply this relative Hochster--Takayama framework to modules arising from symbolic power filtrations, including symbolic quotients $I^{(t)}/I^{(t+1)}$ and symbolic-ordinary discrepancy module $I^{(t)}/I^t$. In particular, for a squarefree monomial ideal $I$, we give a precise classification of when $I^{(t)}/I^{(t+1)}$ is Cohen-Macaulay for all or,  equivalently, for some $t \ge 2$. When $I$ is the edge ideal of a graph, we characterize the Cohen-Macaulayness of $I^{(t)}/I^t$ for all or, equivalently, for some sufficiently large $t$, and analyze the behavior of its dimension function.
	\end{abstract}
	
	\maketitle
	
	
	
\section{Introduction} \label{sec.intro}

The interplay between combinatorial structures and algebraic properties is a recurring theme in the study of monomial ideals. For squarefree monomial ideals $J \subseteq I$, the quotient $I/J$ admits a powerful combinatorial description via the \textit{relative simplicial complex} $(\Delta,\Gamma)$, where $\Delta$ and $\Gamma$ are Stanley-Reisner complexes associated to $I$ and $J$. This connection is established through the \textit{relative Stanley-Reisner theory}, initially developed in \cite{Stanley} and further extended in \cite{AS2016} in their studies of the upper bound conjectures and theorems. 
This theory reveals a deep correspondence: analogous to the familiar Stanley-Reisner correspondence for single squarefree monomial ideals, the Hilbert series of $I/J$ can be directly understood from the $f$-vector (or $h$-vector) of the relative complex $(\Delta,\Gamma)$. Moreover, a relative Hochster's formula exists, providing a method to compute the multi-graded pieces of local cohomology modules of $I/J$.

While relative Stanley-Reisner theory provides a comprehensive framework for quotients of squarefree monomial ideals, a more general approach is needed for arbitrary monomial ideal quotients. Takayama \cite{T2005} introduced \textit{degree complexes} and a generalized Hochster's formula (cf. \cite{Hochster}) to compute the multi-graded pieces of local cohomology modules for any monomial ideal. Our primary goal in this paper is to bridge these two theories to formulate and study \emph{relative degree complexes} associated to an arbitrary monomial ideal quotient $I/J$. 

We will prove a \textit{relative Hochster--Takayama formula} for relative complexes, extending Takayama's work to general monomial ideal quotients. Leaving unexplained terminology until later, our first result is stated as follows.

\medskip

\noindent \textbf{Theorem \ref{HochsterTakayama}.} Let $J\subseteq I$ be monomial ideals in $S = \kk[x_1, \dots, x_n]$. For each $i\in\ZZ$ and $\a\in\ZZ^n$, there is an isomorphism of $\kk$-vector spaces
$$H_{\mm}^{i}(I/J)_{\a}\cong \widetilde{H}^{i-|G_\a|-1}((\Delta_\a(J),\Delta_\a(I)); \kk),$$
and $H_{\mm}^{i}(I/J)_{\a}=(0)$ if $G_\a \not\in \Delta(\sqrt{I})$. (Here, $(\Delta_\a(J),\Delta_\a(I))$ represent the relative degree complex at the multidegree $\a \in \ZZ^n$ of the pair $J \subseteq I$.)
 
\medskip

We also prove a functorial result showing that the isomorphisms of $\kk$-vector spaces in the relative Hochster--Takayama formula are compatible with multiplication by variables on local cohomology modules. In particular, in \Cref{MainLem}, we show that 	for $\a, \b \in \NN^n$, the multiplication by a monomial $\x^\b$ induces a commutative diagram
\[
\begin{tikzcd}
	H^i_{\mathfrak m}(M)_\a
	\arrow[r,"{\cdot \x^\b}"]
	\arrow[d,"\cong"']
	&
	H^i_{\mathfrak m}(M)_{\a+\b}
	\arrow[d,"\cong"]
	\\
	\widetilde H^{\,i-|G_\a|-1}\!
	\big(
	\Delta_\a(J),\,\Delta_\a(I)
	\big)
	\arrow[r]
	&
	\widetilde H^{\,i-|G_\a|-1}\!
	\big(
	\Delta_{\a+\b}(J),\,\Delta_{\a+\b}(I)
	\big),
\end{tikzcd}
\]
where the vertical maps are given by the relative Hochster--Takayama
formula, and the bottom horizontal map is induced by the natural inclusions $\Delta_{\a+\b}(J)\ \subseteq\ \Delta_\a(J) \text{ and }
\Delta_{\a+\b}(I)\ \subseteq\ \Delta_\a(I).$

As a first application of this framework, we establish a \emph{Relative Reisner Criterion}, in \Cref{thm:relative-reisner}, characterizing the Cohen-Macaulayness of monomial ideal quotients in terms of the vanishing of reduced relative homology of links in the degree complexes. This result generalizes the classical Reisner criterion (see \cite{Reisner}) and applies uniformly to a broad class of monomial ideal quotients.
In another application, we apply this theory to study modules arising from \emph{symbolic power filtrations}. Symbolic powers of ideals carry important geometric information but are often harder to understand than ordinary powers. It has been an active research program to understand symbolic powers and their relationship to ordinary powers (cf. \cite{ELS, HHT2007, HH} and references thereafter). For a squarefree monomial ideal $I$, we study both the \emph{symbolic quotients} $I^{(t)}/I^{(t+1)}$ and 
\emph{symbolic-ordinary discrepancy modules} $I^{(t)}/I^t$.

The study of the Cohen-Macaulayness of symbolic powers $I^{(t)}$, for $t \ge 1$, has a rich history. For instance, in \cite{MT2011, V2011}, when $I$ is a squarefree monomial ideal, this property was elegantly characterized by the matroidal nature of the Stanley-Reisner complex associated $I$. We extend this matroidal characterization of  \cite{MT2011, V2011} to show that the Cohen-Macaulayness for some (or equivalently all) symbolic quotient $I^{(t)}/I^{(t+1)}$ forces the same strong combinatorial constraint on the underlying simplicial complex. 
	
	\medskip
	
	\noindent\textbf{Theorem \ref{Main1}.} Let $\Delta$ be a simplicial complex on $[n]$ and let $I=I_{\Delta}$ be its Stanley--Reisner ideal.
	Then the following are equivalent:
	\begin{enumerate}
		\item[(1)] $I^{(t)}/I^{(t+1)}$ is Cohen-Macaulay for all $t\ge1$;
		\item[(2)] $I^{(t)}/I^{(t+1)}$ is Cohen-Macaulay for some $t\ge2$;
		\item[(3)] $\Delta$ is a matroid.
	\end{enumerate}
	Moreover, if $\dim S/I \ge2$, then the above are further equivalent to:
	\begin{enumerate}
		\item[(4)] $I^{(t)}/I^{(t+1)}$ satisfies Serre's condition $(S_2)$ for all $t\ge1$;
		\item[(5)] $I^{(t)}/I^{(t+1)}$ satisfies Serre's condition $(S_2)$ for some $t\ge2$.
	\end{enumerate}
	If $\dim S/I \le1$, then $I^{(t)}/I^{(t+1)}$ satisfies $(S_2)$ trivially for all $t\ge1$.
	
	\medskip
	
	A central theme of \Cref{Main1} is rigidity: the existence
	of a single relative degree complex with disconnected $0$-th homology forces
	depth $1$ and rules out higher Serre conditions, leading to sharp classification
	and collapse results for Cohen-Macaulay, $(S_2)$, and generalized
	Cohen-Macaulay properties; see also \Cref{rmk:gCMandB}.
	
We continue to focus on symbolic-ordinary discrepancy modules $I^{(t)}/I^t$. The function $t \mapsto \dim I^{(t)}/I^t$ is known to be asymptotically constant (see \cite{HPV2008}). We focus on edge ideals of graphs and show that this dimension function is non-decreasing (see Theorem \ref{dim-graph}). For unicyclic and perfect graphs we illustrate precisely where this function stabilizes (see Theorems \ref{thm.unicyclic} and \ref{thm.perfect}). We further give a complete classification for the Cohen-Macaulay property of $I^{(t)}/I^t$ for all or, equivalently, sufficiently large $t$.

\medskip
	
	\noindent\textbf{Theorem \ref{thm.CMEdge}.}  Let $G$ be a graph that is not bipartite and let $I = I(G)$ be its edge ideal. The following are equivalent:
	\begin{enumerate}
		\item $I^{(t)}/I^{t}$ is Cohen-Macaulay for all $t \ge 1$;
		\item $I^{(t)}/I^{t}$ is Cohen-Macaulay for some $t\ge \max\{t(G),c(G)\}+3$;
		\item For all $t \ge 1$, either $I^{(t)}/I^t = (0)$ or $\dim(I^{(t)}/I^t) = 0$;
		\item For any induced odd cycle $C$ in $G$, we have $N[C] = [n]$.
	\end{enumerate}

	\medskip
	
	The study of dimension of Cohen-Macaulayness of symbolic-ordinary discrepancy modules (Theorems \ref{dim-graph}, \ref{thm.unicyclic}, \ref{thm.perfect} and \ref{thm.CMEdge}) is based on investigating modules of the form $S/\sqrt{I:x^\a}$ which, for $\a \in I \setminus J$, is a component of the annihilator of $I/J$.
	
	We remark that while the relative Hochster--Takayama framework robustly captures vanishing properties of local cohomology, and hence Cohen-Macaulay and generalized Cohen-Macaulay conditions, finer properties such as Buchsbaumness or sequentially C-Mness depend on the full multigraded module structure of local cohomology and do not admit comparably concise formulations in terms of degree complexes alone. Accordingly, we have restricted our attention to those properties that can be systematically detected within the functorial relative framework developed in this paper.
	
	\medskip
	
\noindent\textbf{Outlines of the paper.}	In the next section, we collect basic definitions and properties used in the paper. In \Cref{sec.HT}, we provide the Relative Hochster--Takayama formula, \Cref{HochsterTakayama}, and prove a functoriality result, \Cref{MainLem}, for monomial ideal quotients. In \Cref{sec.monIdeals} we restate relative Hochster--Takayama formula for quotients of squarefree monomial ideals, highlighting that, in this case, relative degree complexes are relative complexes of restricted complexes; see Theorem \ref{thm:SR-relative-HT}. We obtain the Relative Reisner's Criterion (\Cref{thm:relative-reisner}), and a criterion for generalized Cohen-Macaulayness (Theorems \ref{thm:gCM-criterion}) in \Cref{sec.gCMB}. \Cref{sec.sym/sym} is devoted to investigating the Cohen-Macaulayness of symbolic quotients of the form $I^{(t)}/I^{(t+1)}$, encapsulating in \Cref{Main1}. The last two sections, \Cref{sec.dim} and \Cref{sec.sym/ord}, focus on symbolic-ordinary discrepancy modules of the form $I^{(t)}/I^t$, when $I$ is the edge ideal of a graph. We prove that the dimension of the symbolic-ordinary discrepancy modules is an increasing function in \Cref{dim-graph}. We classify graphs for which one (equivalently, all) symbolic-ordinary discrepancy module is Cohen-Macaulay in \Cref{thm.CMEdge}.
	
	
	\section{Preliminaries} \label{sec.prel}
	
	This section provides essential definitions and background concepts for the paper. For unexplained terminology, we refer the reader to \cite{BrunsHerzog, MS2005, Stanley}.
	
	\subsection{Simplicial Complexes and Homology}
	
	A \textit{simplicial complex} $\Delta$ on a finite set $[n]=\{1,2,...,n\}$ is formally defined as a collection of subsets of $[n]$ such that if $F\in\Delta$ and $F'\subseteq F$, then $F'\in\Delta$. For convenience in later discussions, the condition that $\{i\}\in\Delta$ for all $i=1,2,...,n$ is not assumed. 
	Set $\dim F=|F|-1$, where $|F|$ means the cardinality of $F$, and $\dim\Delta=\max\{\dim F~\big|~ F\in\Delta\}$, which is called the \emph{dimensio}n of $\Delta$. When $[n]$ is equipped with a linear order, say $<$, $\Delta$ is called an \textit{oriented} simplicial complex. In such a case, we denote $F=\{i_{1},...,i_{r}\}$ for $F\in\Delta$ with the order sequence $i_{1}<...<i_{r}$. 
	
	Let $F$ be a face of a simplicial complex $\Delta$. The \emph{link} of $F$ in $\Delta$, denoted by $\link_\Delta F$ is the simplicial complex whose faces are given by $\{G \subseteq [n] ~\big|~ G \cap F = \varnothing, G \cup F \in \Delta\}$. The \emph{star} center at $F$ in $\Delta$, denoted by $\starop_\Delta F$, is the simplicial complex whose faces are given by $\{\tau \cup \sigma ~\big|~ \tau \subseteq F, \sigma \in \link_\Delta F\}.$
	
	Let $\Delta$ be an oriented simplicial complex with $\dim\Delta=d$. We denote by $\mathcal{C}(\Delta)_{\bullet}$ the \textit{augmented oriented chain complex} of $\Delta$, where 
	$$C_{t}=\bigoplus_{\dim F=t}\mathbb{Z}F \text{ and } \partial F=\sum_{j=0}^{t}(-1)^{j}F_{j}, \text{ for all }  \ F\in\Delta.$$
	(Here, $F_{j}=\{i_{0},\cdots,\hat{i}_{j},\cdots,i_{t}\}$ if $F=\{i_{0},...,i_{t}\}$.)
	
	A \textit{relative simplicial complex} is a pair $(\Delta,\Gamma)$ of simplicial complexes for which $\Gamma\subseteq\Delta$ is a simplicial subcomplex. The faces of $(\Delta,\Gamma)$ are the elements $\{F\in\Delta ~\big|~ F\notin\Gamma\}$. Therefore, an ordinary simplicial complex can be viewed as a relative simplicial complex with $\Gamma=\varnothing$. The \emph{dimension} of a relative simplicial complex $\Phi = (\Delta,\Gamma)$ is defined by $\dim \Phi=\max \{\dim(F) ~\big|~ F\in(\Delta,\Gamma)\}$. If $\Delta$ is an oriented simplex complex and $\Gamma$ is an oriented subcomplex of $\Delta$, then $\mathcal{C}(\Gamma)_{\bullet}$ is a subcomplex of $\mathcal{C}(\Delta)_{\bullet}$, which yields the \textit{quotient complex} $\mathcal{C}(\Delta)_{\bullet}/\mathcal{C}(\Gamma)_{\bullet}$.
	
	The $i$-th \textit{reduced relative simplicial homology module} of $(\Delta, \Gamma)$ is defined as $$\widetilde{H}_{i}((\Delta,\Gamma);\kk)=H_{i}(\mathcal{C}(\Delta)_{\bullet}/\mathcal{C}(\Gamma)_{\bullet}\otimes \kk),$$ and the $i$-th \textit{reduced relative simplicial cohomology module} of $(\Delta, \Gamma)$ is $$\widetilde{H}^{i}((\Delta,\Gamma);\kk)=H^{i}(\operatorname{Hom}_{\mathbb{Z}}(\mathcal{C}(\Delta)_{\bullet}/\mathcal{C}(\Gamma)_{\bullet};\kk)).$$
	For simplicity of notation, we will often omit $\kk$ and write $\widetilde{H}_{i}(\Delta,\Gamma)$ or $\widetilde{H}^{i}(\Delta,\Gamma)$ for the reduced relative simplicial homology and cohomology of a relative simplicial complex $(\Delta,\Gamma)$.
	
	\subsection{Degree Complexes and Symbolic Powers of Monomial Ideals}
	
	Throughout the paper, $\kk$ is an arbitrary field, $S = \kk[x_1, \dots, x_n]$ is a polynomial ring over $\kk$. 	
	The $t$-th \emph{symbolic power} of an ideal $I \subseteq S$ is defined as
	$$I^{(t)} = \bigcap_{\pp \in \Ass(I)} \left(I^tS_\pp \cap S\right).$$
	
	When $I = I_\Delta$ is the Stanley-Reisner ideal associated to a simplicial complex $\Delta$ over $[n]$, its symbolic powers have a better combinatorial understanding. More precisely, let $\F(\Delta)$ denote the set of facets in $\Delta$ and, for $F \in \Delta$, set $P_F = (x_i ~\big|~ i \not\in F).$ In this case, the primary decomposition of $I$ and its symbolic powers are given by
	$$I = \bigcap_{F \in \F(\Delta)} P_F \text{ and } I^{(t)} = \bigcap_{F \in \F(\Delta)} P_F^t.$$
	
	For $\a = (a_1, \cdots, a_n) \in \ZZ^n$, set $G_{\a}=\{i ~\big|~ |a_{i}<0\}$. The \textit{degree complex} $\Delta_{\a}(I)$, of a monomial ideal $I \subseteq S$ at multi-degree $\a$ (as defined in \cite{T2005}), consists of all $F\subseteq[n]$ such that:
	\begin{enumerate}
		\item $F\cap G_{\a}=\varnothing$
		\item For every minimal generator $x^{\b}$ of $I$, there exists an index $i\notin F\cup G_{\a}$ with $b_{i}>a_{i}$.
	\end{enumerate}
	It is important to note that $\Delta_{\a}(I)$ may be either the empty set or $\{\varnothing\}$ and its vertex set may be a proper subset of $[n]$.
	
	The \textit{Takayama's formula} (see \cite{MT2011, T2005}), which generalizes Hochster's formula, is stated as follows. 
	
	\begin{theorem}[Takayama] \label{thm.Takayama}
		Let $I$ be a monomial ideal in $S$. For each $i\in\mathbb{Z}$ and $\a \in \mathbb{Z}^{n}$, there is an isomorphism of $\kk$-vector spaces 
		$$H_{\mathfrak{m}}^{i}(S/I)_{\a}\cong\widetilde{H}^{i-|G_{\a}|-1}(\Delta_{\a}(I);\kk),$$
		and $H_{\mathfrak{m}}^{i}(S/I)_{\a}=(0)$ if $G_{\a}\notin\Delta(\sqrt{I})$.
	\end{theorem}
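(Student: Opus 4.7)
The plan is to compute the local cohomology $H^i_{\mathfrak{m}}(S/I)$ via the $\mathbb{Z}^n$-graded \v{C}ech complex $\check{C}^\bullet(\x; S/I)$ on $x_1, \dots, x_n$, and then to identify its multigraded strand at $\a$ with a shifted reduced cochain complex of the degree complex $\Delta_{\a}(I)$. First, for each $\sigma \subseteq [n]$, I would describe the multigraded piece $(S_{x_\sigma})_{\a}$: it is one-dimensional, spanned by $\x^{\a}$, precisely when $G_{\a} \subseteq \sigma$ (so that no positivity is required on the exponents indexed by $\sigma$), and zero otherwise. Next, the class of $\x^{\a}$ becomes zero in $(S/I)_{x_\sigma}$ precisely when $\x^{\a} \in IS_{x_\sigma}$, equivalently when some minimal generator $\x^{\b}$ of $I$ divides $\x^{\a} x_\sigma^k$ for $k \gg 0$, i.e., $b_i \le a_i$ for all $i \notin \sigma$.

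Combining these, $(S/I)_{x_\sigma,\a}$ is one-dimensional exactly when $\sigma \supseteq G_{\a}$ and, for every minimal generator $\x^{\b}$ of $I$, there exists $i \notin \sigma$ with $b_i > a_i$. The substitution $F := \sigma \setminus G_{\a}$ then yields a bijection between such $\sigma$ with $|\sigma| = i$ and faces $F \in \Delta_{\a}(I)$ of dimension $i - |G_{\a}| - 1$, matching exactly the two defining conditions of the degree complex. I would then verify that, under this bijection and a total order on $[n]$ placing the indices of $G_{\a}$ first, the \v{C}ech coboundary corresponds, up to a global sign absorbed by the shift, to the oriented reduced simplicial coboundary of $\Delta_{\a}(I)$. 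This produces an isomorphism of cochain complexes
$$\check{C}^\bullet(\x; S/I)_{\a} \;\cong\; \widetilde{C}^{\,\bullet - |G_{\a}| - 1}(\Delta_{\a}(I); \kk),$$
whence the claimed cohomological isomorphism follows since $H^i_{\mathfrak{m}}(-)$ is computed by $\check{C}^\bullet(\x; -)$.

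For the vanishing assertion, if $G_{\a} \notin \Delta(\sqrt{I})$ then $\prod_{i \in G_{\a}} x_i \in \sqrt{I}$, so some power of this monomial lies in $I$. Choosing a minimal generator $\x^{\c}$ of $I$ dividing that power forces $\supp(\c) \subseteq G_{\a}$, and hence $c_i = 0 \le a_i$ for every $i \notin G_{\a}$. Consequently, condition (2) in the definition of $\Delta_{\a}(I)$ fails for every candidate face $F$ (including $F = \varnothing$), so $\Delta_{\a}(I)$ has no faces at all; its reduced cochain complex is therefore zero, and combined with the first part this gives $H^i_{\mathfrak{m}}(S/I)_{\a} = 0$.

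The main obstacle is the chain-level identification: matching the \v{C}ech differential with the oriented simplicial coboundary requires a careful choice of orientation on $[n]$ and honest bookkeeping of the signs introduced by the shift $|G_{\a}| + 1$. While each individual step is standard, upgrading the dimension-by-dimension bijection $\sigma \leftrightarrow F$ to a chain map compatible with both coboundaries, rather than merely equating Betti numbers, is the technical heart of the argument.
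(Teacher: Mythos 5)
Your proposal is correct and follows the standard route: the paper states Theorem \ref{thm.Takayama} without proof, citing Takayama, but the chain-level identification you construct, namely $\check{\bf C}^\bullet(\x;S/I)_{\a}[|G_{\a}|+1]\cong\widetilde C^{\bullet}(\Delta_{\a}(I);\kk)$, is precisely the isomorphism (\ref{eq.CheckComp}) that the paper itself invokes from Takayama's work in the proof of Theorem \ref{HochsterTakayama}. Your treatment of the vanishing case (showing $\Delta_{\a}(I)$ is the void complex when $G_{\a}\notin\Delta(\sqrt I)$, so the strand is identically zero) is also the standard and correct argument.
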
 
	
	Takayama's formula allows one to recover Reisner's criterion (see \cite{Reisner}) for Cohen-Macaulay monomial ideals.
	
	\begin{theorem}[Reisner's Criterion] \label{thm.Reisner}
		Let $I \subseteq S$ be a monomial ideal. The following are equivalent:
		\begin{enumerate}
			\item $S/I$ is Cohen-Macaulay of dimension $d$; and
			\item For every $\a \in \ZZ^n$,
			$$\widetilde{H}^j(\left(\Delta_\a(I);\kk\right) = 0 \text{ for all } j < d-|G_{\a}|-1.$$
		\end{enumerate}
	\end{theorem}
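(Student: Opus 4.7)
The plan is to deduce Reisner's criterion directly from Takayama's formula (\Cref{thm.Takayama}) together with the standard local-cohomology characterization of Cohen--Macaulayness. Throughout, we take $d=\dim S/I$, which is the dimension appearing in both (1) and (2).

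First I would recall that for a finitely generated graded $S$-module $M$, Cohen--Macaulayness is equivalent to the vanishing of $H^i_{\mathfrak m}(M)$ in every cohomological degree below $\dim M$. Indeed, $\depth M\le \dim M$ always, with equality exactly when $M$ is Cohen--Macaulay, and by Grothendieck's vanishing/non-vanishing theorems, $H^i_{\mathfrak m}(M)=0$ for $i>\dim M$, while $H^{\dim M}_{\mathfrak m}(M)\ne 0$. Hence (1) is equivalent to
\[
H^i_{\mathfrak m}(S/I)=0\quad\text{for all}\quad i<d.
\]

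Next, because $I$ is a monomial ideal, $S/I$ carries a natural $\ZZ^n$-grading and each local cohomology module $H^i_{\mathfrak m}(S/I)$ inherits a $\ZZ^n$-grading. Vanishing of $H^i_{\mathfrak m}(S/I)$ is therefore equivalent to vanishing of every multigraded component $H^i_{\mathfrak m}(S/I)_{\a}$ for $\a\in\ZZ^n$. Applying Takayama's formula to each such component, the condition $H^i_{\mathfrak m}(S/I)_\a = 0$ translates to
\[
\widetilde{H}^{\,i-|G_\a|-1}(\Delta_\a(I);\kk)=0.
\]

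Finally, reindexing by $j=i-|G_\a|-1$, so that the constraint $i<d$ becomes $j<d-|G_\a|-1$, I obtain exactly condition (2). Conversely, running the reindexing backwards shows that (2) implies the vanishing of $H^i_{\mathfrak m}(S/I)_\a$ for all $\a$ and all $i<d$, hence of $H^i_{\mathfrak m}(S/I)$ for $i<d$, which is (1). The only subtle bookkeeping point, and the mild obstacle worth flagging, is the dimension compatibility: one must check that (2) cannot hold with $d$ strictly larger than $\dim S/I$. This follows from Grothendieck's non-vanishing theorem, which guarantees some multidegree $\a$ with $H^{\dim S/I}_{\mathfrak m}(S/I)_{\a}\ne 0$; via Takayama's formula this produces a nonzero $\widetilde{H}^{j}(\Delta_\a(I);\kk)$ with $j=\dim S/I - |G_\a|-1$, forcing $d\le \dim S/I$ in (2) and hence $d=\dim S/I$ under the Cohen--Macaulay hypothesis.
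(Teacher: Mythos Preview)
Your proof is correct and follows precisely the approach the paper indicates: the paper does not actually supply its own proof of \Cref{thm.Reisner}, but states it as background and remarks that ``Takayama's formula allows one to recover Reisner's criterion.'' Your argument carries this out, and it mirrors the paper's proof of the relative analogue (\Cref{thm:relative-reisner-implication}), which uses the same reindexing $j=i-|G_\a|-1$ to pass between local-cohomology vanishing and simplicial-cohomology vanishing. Your final paragraph on dimension compatibility is a harmless extra, since you already fixed $d=\dim S/I$ at the outset.
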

	
\subsection{Edge Ideals of Graphs and Other Conventions}

By a \emph{graph} over the vertex set $[n]$ we always mean a \emph{simple} (i.e., no loops nor multiple edges) and \emph{connected} graph with vertices $\{1, \dots, n\}$. We shall use $E(G)$ to denote the edge set of a graph $G$ (and sometimes use $V(G)$ to denote the vertex set of $G$ if it is not transparent from the context). For a vertex $x \in V(G)$ or, more generally, a subset $F \subseteq V(G)$, let $N(x)$ and $N(F)$ denote the sets of vertices that are adjacent to $x$ or to at least a vertex in $F$, respectively. Set $N[x] = N(x) \cup \{x\}$ and $N[F] = N(F) \cup F$.

The \emph{edge ideal} of a graph $G$ (originally defined in \cite{SVV1994}) over the vertices $[n]$ is given by
$$I(G) = \left(x_ix_j ~\big|~ \{i,j\} \in E(G)\right) \subseteq S = \kk[x_1, \dots, x_n].$$
For any vector $\a=(a_{1},\cdots,a_{n})\in\mathbb{Z}^{n}_{\ge 0}$, we write $x^\a$ for the monomial $x_1^{a_1} \cdots x_n^{a_n}$. For a subset $F \subseteq [n]$, set $x_F = \prod_{i \in F} x_i$. 

We use $\{\e_1, \dots, \e_n\}$ to denote the standard basis of $\RR^n$, and set $\e_F = \sum_{i \in F} \e_i$ for any subset $F \subseteq [n]$.
By convention, the dimension of the zero module is $-1$. 
	
	
\section{Relative Degree Complexes and Hochster--Takayama Formula} \label{sec.HT}
	
	In this section, we establish the Hochster--Takayama formula for monomial ideal quotients and corresponding relative simplicial complexes. We start by identifying the relative version of degree complexes.

\begin{lemma}\label{Key0}
	If $J\subseteq I$ be monomial ideals in $S$. Then $(\Delta_\a(J),\Delta_\a(I))$ is a relative simplicial complex for any $\a\in\ZZ^n$.
\end{lemma}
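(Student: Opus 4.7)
The goal is to verify the two requirements in the definition of a relative simplicial complex: that both $\Delta_\a(J)$ and $\Delta_\a(I)$ are ordinary simplicial complexes on $[n]$, and that the containment $\Delta_\a(I)\subseteq\Delta_\a(J)$ holds. The first requirement is essentially a sanity check on Takayama's definition, while the second is where the hypothesis $J\subseteq I$ enters; I expect this comparison step to be the only real content of the lemma.

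First, I will check that for any monomial ideal $L\subseteq S$, the set $\Delta_\a(L)$ is closed under taking subsets. Fix $F\in\Delta_\a(L)$ and $F'\subseteq F$. Condition (1) is immediate since $F'\cap G_\a\subseteq F\cap G_\a=\varnothing$. For condition (2), given any minimal generator $x^\b$ of $L$, the index $i\notin F\cup G_\a$ with $b_i>a_i$ provided by $F\in\Delta_\a(L)$ also lies outside $F'\cup G_\a$, because $F'\cup G_\a\subseteq F\cup G_\a$. Applying this to $L=J$ and $L=I$ shows that both $\Delta_\a(J)$ and $\Delta_\a(I)$ are simplicial complexes.

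Next, I will establish $\Delta_\a(I)\subseteq\Delta_\a(J)$. Let $F\in\Delta_\a(I)$; we must verify conditions (1) and (2) in the definition of $\Delta_\a(J)$. Condition (1), namely $F\cap G_\a=\varnothing$, is inherited verbatim from membership in $\Delta_\a(I)$. For (2), take an arbitrary minimal generator $x^\c$ of $J$. Since $J\subseteq I$, the monomial $x^\c$ belongs to $I$, hence is divisible by some minimal generator $x^\b$ of $I$; that is, $b_i\le c_i$ for all $i\in[n]$. By hypothesis $F\in\Delta_\a(I)$, so there exists $i\notin F\cup G_\a$ with $b_i>a_i$, and for this same $i$ we have $c_i\ge b_i>a_i$. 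This exhibits the required index for $x^\c$, so $F\in\Delta_\a(J)$.

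Combining the two parts shows that $\Delta_\a(I)$ is a subcomplex of $\Delta_\a(J)$, so $(\Delta_\a(J),\Delta_\a(I))$ satisfies the definition of a relative simplicial complex. The main (and only nontrivial) obstacle is the divisibility argument in the last paragraph, which depends crucially on the fact that one compares minimal generators of $J$ against minimal generators of $I$ via divisibility rather than against themselves; once this is in place, the rest is purely formal.
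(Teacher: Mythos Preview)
Your proof is correct. It differs from the paper's argument in that you work uniformly for all $\a\in\ZZ^n$ straight from Takayama's defining conditions, whereas the paper splits into two cases: for $\a\in\NN^n$ it invokes an external reference (essentially the identification of $\Delta_\a(L)$ with the Stanley--Reisner complex of $\sqrt{L:x^\a}$, so that $\sqrt{J:x^\a}\subseteq\sqrt{I:x^\a}$ yields the inclusion of complexes), and for $\a\notin\NN^n$ it reduces to the nonnegative case via the link identity $\Delta_\a(L)=\link_{\Delta_{\a^+}(L)}G_\a$. Your divisibility argument---passing from a minimal generator $x^\c$ of $J$ to a minimal generator $x^\b$ of $I$ dividing it, then transporting the witness index---is both self-contained and avoids the case split entirely; the paper's route, by contrast, foregrounds the colon-ideal and link descriptions of degree complexes that are reused later in the paper.
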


\begin{proof} For any $\a\in\NN^n$, since $J\subseteq I$, we have $\sqrt{J: x^\a}\subseteq \sqrt{I: x^\a}$. By \cite[Lemma 2.19]{M+2022}, $(\Delta_\a(J),\Delta_\a(I))$ is a relative simplicial complex. For $\a\in\ZZ^n\setminus \NN^n$, let $F=\{i~|~a_i<0\}$ and put $\a^+\in\NN^n$ for $(\a^+)_i=\a_i$ if $i\not\in F$ and $(\a^+)_i=0$, otherwise. One can see that $\Delta_\a(I)=\link_{\Delta_{\a^+}(I)}F$ and $\Delta_\a(J)=\link_{\Delta_{\a^+}(J)}F$. This implies that $(\Delta_\a(J),\Delta_\a(I))$ is a relative simplicial complex, since $(\Delta_{\a^+}(J),\Delta_{\a^+}(I))$ is a relative simplicial complex.
\end{proof}
	
\medskip

\noindent\textbf{Relative Hochster--Takayama formula} Our first main result, the Hochster--Takayama formula for monomial ideal quotients, is stated as follows.

\begin{theorem}[Relative Hochster--Takayama]\label{HochsterTakayama}
	Let $J\subsetneq I$ be monomial ideals in $S$. For each $i\in\ZZ$ and $\a\in\ZZ^n$, there is an isomorphism of $\kk$-vector spaces
	$$H_{\mm}^{i}(I/J)_{\a}\cong \widetilde{H}^{i-|G_\a|-1}((\Delta_\a(J),\Delta_\a(I)); \kk),$$
	and $H_{\mm}^{i}(I/J)_{\a}=(0)$ if $G_\a \not\in \Delta(\sqrt{J})$.
\end{theorem}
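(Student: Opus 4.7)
The plan is to derive the formula from Takayama's theorem (\Cref{thm.Takayama}) applied to $S/I$ and $S/J$ via a comparison of long exact sequences and the five lemma. I begin with the short exact sequence of multigraded $S$-modules
$$0\to I/J \to S/J \to S/I \to 0.$$
Taking local cohomology with respect to $\mm$ and extracting the multidegree $\a$ strand yields a long exact sequence of $\kk$-vector spaces
$$\cdots \to H^{i-1}_\mm(S/I)_\a \to H^i_\mm(I/J)_\a \to H^i_\mm(S/J)_\a \to H^i_\mm(S/I)_\a \to \cdots.$$

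On the combinatorial side, \Cref{Key0} shows that the pair $(\Delta_\a(J),\Delta_\a(I))$ is a relative simplicial complex, i.e., $\Delta_\a(I)\subseteq \Delta_\a(J)$. This produces a short exact sequence of augmented oriented cochain complexes
$$0 \to \widetilde{C}^\bullet(\Delta_\a(J),\Delta_\a(I);\kk) \to \widetilde{C}^\bullet(\Delta_\a(J);\kk) \to \widetilde{C}^\bullet(\Delta_\a(I);\kk) \to 0,$$
and hence the long exact sequence of reduced relative cohomology of the pair. With the shift $j=i-|G_\a|-1$, Takayama's formula gives isomorphisms $H^i_\mm(S/J)_\a \cong \widetilde H^j(\Delta_\a(J))$ and $H^i_\mm(S/I)_\a \cong \widetilde H^j(\Delta_\a(I))$, so the two long exact sequences match up degree by degree.

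The main technical issue --- and the expected principal obstacle --- is to verify that Takayama's isomorphisms assemble into a commutative ladder between these two long exact sequences, i.e., that they are natural with respect to the quotient map $S/J \twoheadrightarrow S/I$. The cleanest way is to work at the level of the \v{C}ech (or Ishida) complex: $H^\bullet_\mm(-)_\a$ is the cohomology of the functorial complex $(\check{C}^\bullet_\mm(-))_\a$, and Takayama's proof identifies the multidegree $\a$ strand of this complex on $S/I$ (up to the shift by $|G_\a|+1$) with $\widetilde{C}^\bullet(\Delta_\a(I);\kk)$ via the natural monomial basis. Since the surjection $S/J\twoheadrightarrow S/I$ is induced by a monomial-level map compatible with the combinatorial inclusion $\Delta_\a(I)\hookrightarrow \Delta_\a(J)$, the required squares commute, and the five lemma then yields the desired isomorphism $H^i_\mm(I/J)_\a \cong \widetilde H^{i-|G_\a|-1}(\Delta_\a(J),\Delta_\a(I))$.

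For the vanishing statement, observe that $J\subseteq I$ implies $\sqrt J\subseteq \sqrt I$ and hence $\Delta(\sqrt I)\subseteq \Delta(\sqrt J)$; so if $G_\a \notin \Delta(\sqrt J)$, then also $G_\a\notin \Delta(\sqrt I)$. The vanishing part of \Cref{thm.Takayama} applied to both $S/I$ and $S/J$ then gives $H^i_\mm(S/I)_\a = H^i_\mm(S/J)_\a = 0$ for every $i$, and exactness forces $H^i_\mm(I/J)_\a = 0$. A natural alternative that would sidestep the naturality check is to imitate Takayama's original argument directly for $I/J$, constructing an explicit quasi-isomorphism from $(\check{C}^\bullet_\mm(I/J))_\a$ to a shift of $\widetilde{C}^\bullet(\Delta_\a(J),\Delta_\a(I);\kk)$ by tracking which monomials of $I/J$ survive in each localization $(I/J)_{x_F}$ at multidegree $\a$; this is more self-contained but essentially repeats Takayama's bookkeeping in the relative setting.
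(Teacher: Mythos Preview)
Your approach is essentially the paper's: both tensor the short exact sequence $0\to I/J\to S/J\to S/I\to 0$ with the \v{C}ech complex, invoke Takayama's chain-level identification $(\check{C}^\bullet\otimes S/K)_\a[|G_\a|+1]\cong \widetilde C^\bullet(\Delta_\a(K);\kk)$, and check that the surjection $S/J\to S/I$ corresponds to the restriction induced by $\Delta_\a(I)\subseteq\Delta_\a(J)$. The paper then simply reads off the kernel complex as $\widetilde C^\bullet(\Delta_\a(J),\Delta_\a(I);\kk)$ and takes cohomology; your detour through the five lemma is harmless but unnecessary once that chain-level identification of the kernel is in hand, and the vanishing argument is identical to the paper's.
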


\begin{proof}
	Let $\Check{\bf C}^\bullet=\Check{\bf C}^\bullet(x_1,\dots,x_n)$ be the $\ZZ^n$-graded \v{C}ech complex with respect to
	$\mm=(x_1,\dots,x_n)$. 
	Note that, for any $\ZZ^n$-graded $S$-module $N$,
	\[
	H^i_{\mm}(N)\cong H^i(\Check{\bf C}^\bullet\otimes_S N)
	\quad\text{and}\quad
	H^i_{\mm}(N)_\a\cong H^i\!\big((\Check{\bf C}^\bullet\otimes_S N)_\a\big).
	\]
	
	By Takayama's work \cite{T2005}, for any monomial ideal $K \subseteq S$, we have the following isomorphism of cochain complexes:
	\begin{align}
	(\Check{\bf C}^\bullet\otimes_S S/K)_\a[|G_\a|+1]\ \cong\ \widetilde C^\bullet(\Delta_\a(K);\kk).\label{eq.CheckComp}
	\end{align}

	Consider the short exact sequence of $\ZZ^n$-graded modules
	\[
	0\to I/J\to S/J\to S/I\to 0.
	\]
	Localization is exact, so tensoring with $\Check{\bf C}^\bullet$ yields a short exact sequence of cochain complexes,
	and taking the $\a$-graded strand preserves exactness:
	\[
	0\to (\Check{\bf C}^\bullet\otimes_S I/J)_\a \to (\Check{\bf C}^\bullet\otimes_S S/J)_\a
	\to (\Check{\bf C}^\bullet\otimes_S S/I)_\a \to 0.
	\]
	Furthermore, since $J\subseteq I$, we have $\Delta_\a(I)\subseteq \Delta_\a(J)$. Thus, the map
	$(\Check{\bf C}^\bullet\otimes_S S/J)_\a\to(\Check{\bf C}^\bullet\otimes_S S/I)_\a$
	is the restriction map induced by $\Delta_\a(I)\subseteq\Delta_\a(J)$.
	Together with (\ref{eq.CheckComp}), this implies that the kernel complex identifies (after shifting by $|G_\a|+1$) with the reduced relative cochain complex
	$\widetilde C^\bullet(\Delta_\a(J),\Delta_\a(I);\kk)$. Hence,
	\[
	(\Check{\bf C}^\bullet\otimes_S I/J)_\a[|G_\a|+1]\ \cong\ \widetilde C^\bullet(\Delta_\a(J),\Delta_\a(I);\kk).
	\]
	Taking cohomology gives
	\[
	H^i_{\mm}(I/J)_\a \ \cong\ \widetilde H^{\,i-|G_\a|-1}\!\big((\Delta_\a(J),\Delta_\a(I));\kk\big).
	\]
	
Finally, if $G_\a\notin\Delta(\sqrt J)$, then Takayama's vanishing result (see \cite{T2005}) implies $H^i_{\mm}(S/J)_\a=0$ for all $i$.
	Since $\sqrt J\subseteq \sqrt I$, also $G_\a\notin\Delta(\sqrt I)$, so $H^i_{\mm}(S/I)_\a=0$ for all $i$.
	The long exact sequence in local cohomology induced by
	$0\to I/J\to S/J\to S/I\to 0$ then yields $H^i_{\mm}(I/J)_\a=0$ for all $i$.
\end{proof}

The following Reisner-type implication is a direct consequence of the Relative Hochster--Takayama formula in Theorem \ref{HochsterTakayama} (see also \Cref{thm.Reisner}). 

\begin{corollary}\label{thm:relative-reisner-implication}
	Let $d=\dim(I/J)$. 
	Assume that for every $\mathbf a\in\ZZ^n$,
	\[
	\widetilde H^j\!\left(\Delta_{\mathbf a}(J),\Delta_{\mathbf a}(I);\kk\right)=0
	\quad\text{for all } j< d-|G_{\mathbf a}|-1.
	\]
	Then $I/J$ is Cohen-Macaulay of dimension $d$.
\end{corollary}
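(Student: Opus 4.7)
The plan is to deduce the corollary directly from the Relative Hochster--Takayama formula (Theorem~\ref{HochsterTakayama}) and the standard characterization of Cohen--Macaulayness via vanishing of local cohomology: a finitely generated graded module $M$ of dimension $d$ is Cohen--Macaulay if and only if $H^i_{\mm}(M)=0$ for all $i<d$. Since $d=\dim(I/J)$ is already fixed, it suffices to verify this vanishing, because one always has $\depth(I/J)\le\dim(I/J)$.

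First I would fix $i<d$ and an arbitrary multidegree $\mathbf a\in\ZZ^n$, and invoke Theorem~\ref{HochsterTakayama} to obtain
\[
H^i_{\mm}(I/J)_{\mathbf a}\ \cong\ \widetilde H^{\,i-|G_{\mathbf a}|-1}\!\bigl(\Delta_{\mathbf a}(J),\Delta_{\mathbf a}(I);\kk\bigr).
\]
Setting $j:=i-|G_{\mathbf a}|-1$, the strict inequality $i<d$ is equivalent to $j<d-|G_{\mathbf a}|-1$, so the vanishing hypothesis applies and gives $\widetilde H^{j}(\Delta_{\mathbf a}(J),\Delta_{\mathbf a}(I);\kk)=0$, hence $H^i_{\mm}(I/J)_{\mathbf a}=0$.

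I would then handle the degenerate case $G_{\mathbf a}\notin\Delta(\sqrt J)$ separately: here the second clause of Theorem~\ref{HochsterTakayama} directly yields $H^i_{\mm}(I/J)_{\mathbf a}=0$ for all $i$, so no additional argument is needed. Since $H^i_{\mm}(I/J)=\bigoplus_{\mathbf a\in\ZZ^n}H^i_{\mm}(I/J)_{\mathbf a}$ in the $\ZZ^n$-graded category, combining these two cases shows $H^i_{\mm}(I/J)=0$ for every $i<d$. This gives $\depth(I/J)\ge d=\dim(I/J)$, and therefore $I/J$ is Cohen--Macaulay of dimension $d$.

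There is no real obstacle here: the result is essentially a bookkeeping translation of the hypothesis under the isomorphism supplied by Theorem~\ref{HochsterTakayama}, with the only minor point being the verification that the index shift $j=i-|G_{\mathbf a}|-1$ matches the hypothesis uniformly across all $\mathbf a$. Note that the converse direction (the full Relative Reisner Criterion of Theorem~\ref{thm:relative-reisner}) would require an additional dimension-counting argument to produce a nonvanishing class at the critical index, but that is not needed for the one-sided implication stated here.
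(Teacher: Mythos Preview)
Your proposal is correct and follows essentially the same argument as the paper: fix $i<d$ and $\mathbf a\in\ZZ^n$, apply Theorem~\ref{HochsterTakayama}, and observe that $i-|G_{\mathbf a}|-1<d-|G_{\mathbf a}|-1$ so the hypothesis kills the relevant relative cohomology. The separate treatment of the case $G_{\mathbf a}\notin\Delta(\sqrt J)$ and the remark about the $\ZZ^n$-graded decomposition are harmless extra details the paper omits, but the core reasoning is identical.
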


\begin{proof}
	Fix $i<d$ and $\mathbf a\in\ZZ^n$.
	By Theorem~\ref{HochsterTakayama},
	\[
	H^i_{\mm}(I/J)_{\mathbf a}
	\;\cong\;
	\widetilde H^{\,i-|G_{\mathbf a}|-1}
	\!\left(\Delta_{\mathbf a}(J),\Delta_{\mathbf a}(I);\kk\right).
	\]
	Since $i-|G_{\mathbf a}|-1<d-|G_{\mathbf a}|-1$, the right-hand side vanishes
	by hypothesis. Hence $H^i_{\mm}(I/J)=0$ for all $i<d$, and therefore $I/J$
	is Cohen-Macaulay.
\end{proof}

\medskip

Theorem \ref{HochsterTakayama} further gives the following rigidity result and an immediate corollary.

\begin{theorem}[Rigidity]\label{thm:rigidity}
	Suppose that $\dim I/J \ge 2$.
	If there exists $\mathbf a$ such that
	\[
	\widetilde H^0(\Delta_{\mathbf a}(J),\Delta_{\mathbf a}(I)) \neq 0,
	\]
	then $\depth I/J \le 1$.
\end{theorem}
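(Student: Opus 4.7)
The plan is to invoke the Relative Hochster--Takayama formula (\Cref{HochsterTakayama}) with the choice $i = |G_{\mathbf a}|+1$, which turns the hypothesized non-vanishing of $\widetilde H^0(\Delta_{\mathbf a}(J),\Delta_{\mathbf a}(I))$ into the non-vanishing of $H^{|G_{\mathbf a}|+1}_{\mathfrak m}(I/J)_{\mathbf a}$. The hypothesis $\dim I/J \ge 2$ guarantees that the ensuing depth bound is meaningful (for $\dim I/J \le 1$ the conclusion $\depth(I/J) \le 1$ is automatic). In the principal case $\mathbf a \in \NN^n$, so $|G_{\mathbf a}|=0$, the formula immediately gives $H^1_{\mathfrak m}(I/J) \neq 0$, and since depth equals the smallest index with non-vanishing local cohomology we conclude $\depth(I/J) \le 1$.

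For general $\mathbf a$ with $|G_{\mathbf a}|\ge 1$, the strategy is to reduce to the $\NN^n$ case by peeling off negative coordinates one at a time. Using the link identification $\Delta_{\mathbf a}(K) = \link_{\Delta_{\mathbf a^+}(K)} G_{\mathbf a}$ from the proof of \Cref{Key0}, for any $v \in G_{\mathbf a}$ the pair $(\Delta_{\mathbf a}(J),\Delta_{\mathbf a}(I))$ is the link pair at $v$ inside $(\Delta_{\mathbf a'}(J),\Delta_{\mathbf a'}(I))$, where $\mathbf a'$ is obtained from $\mathbf a$ by raising its $v$-coordinate to $0$. A relative Mayer--Vietoris argument on the decomposition $\Delta_{\mathbf a'}(K) = \starop_{\Delta_{\mathbf a'}(K)} v \cup (\Delta_{\mathbf a'}(K) \setminus v)$, noting that the star pair is a pair of cones with common apex $v$ and hence has vanishing relative cohomology, should transfer the non-vanishing of $\widetilde H^0$ to a non-vanishing at $\mathbf a'$. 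Iterating this brings the witness into $\NN^n$, reducing to the principal case.

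The main obstacle is controlling the cohomological degree in this reduction: \emph{a priori} the nonzero class in $\widetilde H^0$ of the link pair could be shifted into $\widetilde H^1$ at $\mathbf a'$ via the Mayer--Vietoris connecting homomorphism rather than remaining in degree $0$. Verifying that the class does not get shifted --- either through direct analysis of the connecting map, or by exploiting the functoriality of \Cref{MainLem} to identify the propagated class as arising from multiplication by $x^{\e_v}$ in local cohomology --- is where the main subtlety lies.
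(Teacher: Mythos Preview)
Your ``principal case'' is exactly the paper's proof: the paper applies the Relative Hochster--Takayama formula once to conclude $H^1_{\mm}(I/J)_{\mathbf a}\neq 0$, hence $\depth I/J\le 1$. That one line is the entire argument, and it is only valid when $|G_{\mathbf a}|=0$, i.e.\ $\mathbf a\in\NN^n$. The paper does not treat general $\mathbf a\in\ZZ^n$ at all.

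Your reduction attempt for $G_{\mathbf a}\neq\varnothing$ is not only beyond the paper---it cannot succeed, because the statement is \emph{false} for general $\mathbf a$. Take $S=\kk[x_1,x_2,x_3]$, $I=(x_2,x_3)$, $J=(x_2x_3)$. From the short exact sequence $0\to I/J\to S/J\to S/I\to 0$ and the Cohen--Macaulayness of $S/J$ (dimension~$2$) and $S/I$ (dimension~$1$), the long exact sequence gives $H^0_{\mm}(I/J)=H^1_{\mm}(I/J)=0$, so $\depth I/J=2$. Yet for $\mathbf a=(-1,0,0)$ one has $G_{\mathbf a}=\{1\}$, $\Delta_{\mathbf a}(J)=\{\varnothing,\{2\},\{3\}\}$, $\Delta_{\mathbf a}(I)=\{\varnothing\}$, and $\widetilde H^0(\Delta_{\mathbf a}(J),\Delta_{\mathbf a}(I))\cong\kk^2\neq 0$. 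Thus the ``main obstacle'' you flagged---the degree shift under Mayer--Vietoris---is not a removable subtlety but a genuine obstruction: the nonzero class really does live in $H^{|G_{\mathbf a}|+1}_{\mm}$, not $H^1_{\mm}$. Read the theorem (and its use in \Cref{cor:S2}) with the implicit convention $\mathbf a\in\NN^n$; under that reading your principal case is the complete and correct proof.
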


\begin{proof}
	By the Relative Hochster--Takayama formula,
	$H^1_{\mm}(I/J)_{\mathbf a} \neq 0$, so $\depth I/J \le 1$.
\end{proof}

\begin{corollary}[$(S_2)$ collapse]\label{cor:S2}
	If $\dim(I/J)\ge 2$ and some
	$\widetilde H^0(\Delta_{\mathbf a}(J),\Delta_{\mathbf a}(I)) \neq 0$,
	then $I/J$ fails $(S_2)$.
\end{corollary}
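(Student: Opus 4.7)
The plan is to deduce this corollary directly from Theorem~\ref{thm:rigidity}, using the standard local characterization of Serre's condition $(S_2)$ at the homogeneous maximal ideal $\mm$. Recall that a finitely generated $\ZZ^n$-graded $S$-module $M$ satisfies $(S_2)$ if and only if $\depth M_{\mathfrak p}\ge \min\{2,\dim M_{\mathfrak p}\}$ for every $\mathfrak p\in\Spec S$. In particular, testing this condition at $\mathfrak p=\mm$ gives the necessary inequality $\depth M\ge \min\{2,\dim M\}$.

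First, I would apply Theorem~\ref{thm:rigidity} to the hypothesis: since $\dim(I/J)\ge 2$ and some $\widetilde H^0(\Delta_{\mathbf a}(J),\Delta_{\mathbf a}(I))$ is nonzero, we immediately obtain $\depth(I/J)\le 1$. Next, because $\dim(I/J)\ge 2$, we have $\min\{2,\dim(I/J)\}=2$, so the inequality $\depth(I/J)\ge \min\{2,\dim(I/J)\}$ required by $(S_2)$ at $\mm$ fails. Therefore $I/J$ does not satisfy $(S_2)$.

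The argument is essentially a one-line consequence of Theorem~\ref{thm:rigidity} together with the definition of $(S_2)$; there is no genuine obstacle. The only point meriting a brief word is the reduction to the single prime $\mm$: one should observe that $(S_2)$ is a universal condition over all primes, so its failure at $\mm$ suffices to defeat the property globally, and no local-to-global argument in the other direction is required.
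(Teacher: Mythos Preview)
Your proposal is correct and matches the paper's approach exactly: the paper simply states that the assertion is a direct consequence of Theorem~\ref{thm:rigidity}, and your argument spells out this consequence by testing $(S_2)$ at $\mm$.
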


\begin{proof}
	The assertion is a direct consequence of Theorem \ref{thm:rigidity}.
\end{proof}

\medskip

\noindent\textbf{Functoriality} In parallel to the Hochster--Takayama formula, we obtain the following functoriality result. We shall need a lemma.

\begin{lemma}\label{lem:gradedpiece-dim1}
	Let $J\subseteq I$ be monomial ideals in $S$, and set $M:=I/J$.
	Fix $F\subseteq[n]$ and $\a = (a_1, \dots, a_n) \in\ZZ^n$. Write $\a^-:=\sum_{i\in G_\a}(-a_i)\e_i\in\NN^n \text{ and } \a^+:=\a+\a^-\in\NN^n.$
	The following statements hold:
	\begin{enumerate}
		\item[(1)] If $F\not\supseteq G_\a$, then $(M_{x_F})_\a=0$.
		\item[(2)] If $F\supseteq G_\a$, then $(M_{x_F})_\a$ is either $0$ or a $1$-dimensional
		$k$-vector space. More precisely,
		\[
		(M_{x_F})_\a\neq 0
		\quad\Longleftrightarrow\quad
		x^{\a^+}\in I S_{x_F}\setminus J S_{x_F},
		\]
		and in this case $(M_{x_F})_\a$ is generated by the class of $\frac{x^{\a^+}}{x^{\a^-}}\ \in\ M_{x_F}.$
	\end{enumerate}
\end{lemma}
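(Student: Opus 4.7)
The approach is to reduce everything to a direct computation inside the localization $S_{x_F}$. Since $M = I/J$ and localization is exact, we have $M_{x_F} \cong IS_{x_F}/JS_{x_F}$ as $\ZZ^n$-graded $S_{x_F}$-modules; hence it suffices to identify the $\a$-graded piece of $S_{x_F}$ and then see how $IS_{x_F}$ and $JS_{x_F}$ cut into it.

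A homogeneous element of $(S_{x_F})_\a$ has the form $s/x_F^N$ with $s \in S$ a monomial of multidegree $\a + N\e_F \in \NN^n$; such an $N \ge 0$ exists iff $a_i \ge 0$ for every $i \notin F$, which is the same as $G_\a \subseteq F$. If this fails, then $(S_{x_F})_\a = 0$, so $(M_{x_F})_\a = 0$, proving part (1). Otherwise, for any sufficiently large $N$ the only monomial of multidegree $\a + N\e_F$ is $x^{\a+N\e_F}$, so $(S_{x_F})_\a$ is the one-dimensional $\kk$-vector space spanned by $x^{\a + N\e_F}/x_F^N$, which is independent of the chosen $N$.

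For part (2), assume $G_\a \subseteq F$. Then $\supp(\a^-) = G_\a \subseteq F$, so $x^{\a^-}$ divides a power of $x_F$ and is a unit in $S_{x_F}$; this lets me rewrite the generator of $(S_{x_F})_\a$ as $x^{\a+N\e_F}/x_F^N = x^\a = x^{\a^+}/x^{\a^-}$. Since $(IS_{x_F})_\a$ and $(JS_{x_F})_\a$ are $\kk$-subspaces of a one-dimensional space, each is either $0$ or all of $(S_{x_F})_\a$, and $(IS_{x_F})_\a = (S_{x_F})_\a$ iff $x^\a \in IS_{x_F}$, which by the unit property is equivalent to $x^{\a^+} \in IS_{x_F}$; the analogous statement holds for $J$. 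Combining these facts, $(M_{x_F})_\a$ is at most one-dimensional, is nonzero precisely when $x^{\a^+} \in IS_{x_F} \setminus JS_{x_F}$, and in that case is generated by the class of $x^{\a^+}/x^{\a^-}$. The only point needing care is checking that $x^{\a^-}$ really is a unit under the hypothesis $G_\a \subseteq F$ (this is where the containment is used); once that is in hand, the rest is a dimension count inside a one-dimensional ambient space, so I do not anticipate a substantial obstacle.
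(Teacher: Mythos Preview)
Your proof is correct and follows essentially the same approach as the paper's: both reduce to showing that $(S_{x_F})_\a$ is one-dimensional (you via the representation $s/x_F^N$ with $N\gg0$, the paper via a direct Laurent-monomial computation), then use that $x^{\a^-}$ is a unit in $S_{x_F}$ when $G_\a\subseteq F$ to identify the generator as $x^{\a^+}/x^{\a^-}$ and conclude by a dimension count. The arguments differ only in cosmetic presentation.
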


\begin{proof}
	We work in the $\ZZ^n$-graded localization $S_{x_F}=S[x_i^{-1}\mid i\in F]$.
	Observe that a Laurent monomial $x^\gamma$ lies in $S_{x_F}$ if and only if $\gamma_i\ge 0$ for all $i\notin F$.
	Hence
	\[
	(S_{x_F})_\a\neq 0 \quad\Longleftrightarrow\quad a_i\ge 0 \ \text{ for all } i\notin F
	\quad\Longleftrightarrow\quad G_\a\subseteq F.
	\]
	If $G_\a\subseteq F$, then $\supp(\a^-)\subseteq G_\a\subseteq F$, so $x^{\a^-}$ is a unit in $S_{x_F}$
	and the element $x^{\a^+}/x^{\a^-}$ is a Laurent monomial of degree $\a$.
	
	We now show that $(S_{x_F})_\a$ is $1$-dimensional over $k$, spanned by $x^{\a^+}/x^{\a^-}$.
	Let $x^\alpha/x^\beta$ be any Laurent monomial in $S_{x_F}$ of degree $\a$, where
	$\alpha,\beta\in\NN^n$ and $\supp(\beta)\subseteq F$. The degree condition means $\alpha-\beta=\a$.
	For each $i\in G_\a$ we have $a_i<0$, hence $\alpha_i=\beta_i+a_i\ge 0$ forces $\beta_i\ge -a_i$.
	Thus $\beta\ge \a^-$ componentwise. Consequently $\beta-\a^-\in\NN^n$, and then
	\[
	\alpha-\a^+=\alpha-(\a+\a^-)= (\beta+\a)-(\a+\a^-)=\beta-\a^-\in\NN^n.
	\]
	Set $\delta:=\beta-\a^-=\alpha-\a^+\in\NN^n$. Then
	\[
	\frac{x^\alpha}{x^\beta}
	=
	\frac{x^{\a^+}x^\delta}{x^{\a^-}x^\delta}
	=
	\frac{x^{\a^+}}{x^{\a^-}}.
	\]
	So there is \emph{exactly one} Laurent monomial in $S_{x_F}$ of degree $\a$.
	Therefore $(S_{x_F})_\a$ is $1$-dimensional over $k$, spanned by $x^{\a^+}/x^{\a^-}$.
	
	\smallskip
(1) If $F\not\supseteq G_\a$, then $(S_{x_F})_\a=0$ by our arguments above. Thus,
	$(I S_{x_F})_\a=(J S_{x_F})_\a=0$ and, therefore, $(M_{x_F})_\a=0$.
	
(2) Assume that $F\supseteq G_\a$. As we have shown, in this case, $(S_{x_F})_\a$ is $1$-dimensional and spanned by
	$x^{\a^+}/x^{\a^-}$. Since $I S_{x_F}$ and $J S_{x_F}$ are monomial ideals in $S_{x_F}$, they
	are $\ZZ^n$-graded, so $(I S_{x_F})_\a$ and $(J S_{x_F})_\a$ are $k$-subspaces of $(S_{x_F})_\a$.
	Because $(S_{x_F})_\a$ is $1$-dimensional, each of these graded pieces is either $0$ or all of
	$(S_{x_F})_\a$.
	
	Hence $(M_{x_F})_\a=(I S_{x_F}/J S_{x_F})_\a$ is either $0$ or $1$-dimensional, and it is nonzero
	if and only if
	\[
	\frac{x^{\a^+}}{x^{\a^-}}\in I S_{x_F}
	\quad\text{and}\quad
	\frac{x^{\a^+}}{x^{\a^-}}\notin J S_{x_F}.
	\]
	Since $x^{\a^-}$ is a unit in $S_{x_F}$, for any ideal $K$ we have
	\[
	\frac{x^{\a^+}}{x^{\a^-}}\in K S_{x_F}
	\quad\Longleftrightarrow\quad
	x^{\a^+}\in K S_{x_F}.
	\]
	Applying this with $K=I$ and $K=J$ yields
	\[
	(M_{x_F})_\a\neq 0
	\quad\Longleftrightarrow\quad
	x^{\a^+}\in I S_{x_F}\setminus J S_{x_F}.
	\]
	When this holds, the class of $x^{\a^+}/x^{\a^-}$ spans the (necessarily $1$-dimensional)
	vector space $(M_{x_F})_\a$, completing the proof.
\end{proof}

Functoriality for one ideal was proved in \cite{MN2011}. Our next result gives functoriality for relative degree complexes and local cohomology modules of monomial ideal quotients.

\begin{theorem}[Functoriality]\label{MainLem}
	Let $J\subseteq I$ be monomial ideals in $S$ and let $M = I/J$.
	Fix $i\in\mathbb Z_{\ge 0}$, and let $\a\in\mathbb Z^n$ and $\b\in\mathbb N^n$ be such that $G_{\a+\b} = G_\a$.
	Then multiplication by $\x^\b$ induces a commutative diagram
	\[
	\begin{tikzcd}
		H^i_{\mathfrak m}(M)_\a
		\arrow[r,"{\cdot \x^\b}"]
		\arrow[d,"\cong"']
		&
		H^i_{\mathfrak m}(M)_{\a+\b}
		\arrow[d,"\cong"]
		\\
		\widetilde H^{\,i-|G_\a|-1}\!
		\big(
		\Delta_\a(J),\,\Delta_\a(I)
		\big)
		\arrow[r]
		&
		\widetilde H^{\,i-|G_\a|-1}\!
		\big(
		\Delta_{\a+\b}(J),\,\Delta_{\a+\b}(I)
		\big),
	\end{tikzcd}
	\]
	where the vertical isomorphisms are given by the relative Hochster--Takayama
	formula, and the bottom horizontal map is induced by the natural inclusions
	\[
	\Delta_{\a+\b}(J)\ \subseteq\ \Delta_\a(J)
	\text{ and }
	\Delta_{\a+\b}(I)\ \subseteq\ \Delta_\a(I).
	\]
	In particular, under the hypothesis $G_{\a+\b}=G_\a$, the multiplication map
	\[
	\cdot \x^\b:\ H^i_{\mathfrak m}(M)_\a \longrightarrow H^i_{\mathfrak m}(M)_{\a+\b}
	\]
	is zero if and only if the induced map on relative reduced cohomology is zero.
\end{theorem}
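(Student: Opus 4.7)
The plan is to lift the whole square to the level of the $\ZZ^n$-graded \v{C}ech cochain complex, establish commutativity there summand by summand, and then pass to cohomology. From the proof of \Cref{HochsterTakayama} one already has a functorial identification of cochain complexes $(\check{C}^\bullet\otimes_S M)_\a[|G_\a|+1] \cong \widetilde C^\bullet\!\big((\Delta_\a(J),\Delta_\a(I));\kk\big)$, and likewise in degree $\a+\b$. Under the hypothesis $G_{\a+\b}=G_\a$, the homological shifts on the two sides agree, so the task reduces to exhibiting commutativity between the \v{C}ech-level multiplication $\cdot\x^\b\colon (\check{C}^\bullet\otimes_S M)_\a \to (\check{C}^\bullet\otimes_S M)_{\a+\b}$ and the cochain-level restriction map induced by $\Delta_{\a+\b}(K)\subseteq \Delta_\a(K)$ for $K\in\{I,J\}$ (which is well-defined since $\b\in\NN^n$ preserves the witness condition in the definition of the degree complex).

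I would then analyze this multiplication summand by summand. On each \v{C}ech term $M_{x_F}$, the operator $\cdot\x^\b$ is ordinary module multiplication, so I invoke \Cref{lem:gradedpiece-dim1} as follows: when $F\supseteq G_\a$ and both $(M_{x_F})_\a$ and $(M_{x_F})_{\a+\b}$ are nonzero, they are one-dimensional, generated respectively by $x^{\a^+}/x^{\a^-}$ and $x^{(\a+\b)^+}/x^{(\a+\b)^-}$. Multiplying the first generator by $\x^\b$ produces a Laurent monomial of multidegree $\a+\b$ in $(M_{x_F})_{\a+\b}$, which by one-dimensionality must equal the canonical target generator; hence on identified $\kk$-lines $\cdot\x^\b$ acts as the identity, matching the cochain restriction map exactly. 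When the target graded piece vanishes, both the multiplication and the restriction land in $0$, so compatibility is automatic.

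The main subtlety---and the genuine role of the hypothesis $G_{\a+\b}=G_\a$---is the bookkeeping linking \v{C}ech summands to simplicial faces. Under the relative Takayama identification, the summand $(M_{x_F})_\a$ corresponds to the $\kk$-coefficient at the face $F\setminus G_\a$ of $(\Delta_\a(J),\Delta_\a(I))$, and the analogous statement in degree $\a+\b$ uses $G_{\a+\b}$ in place of $G_\a$; the hypothesis forces these two face labels to coincide and equalizes the cohomological shifts $|G_\a|+1=|G_{\a+\b}|+1$. Without it, one would compare faces of different cardinalities and the diagram would not even be shape-compatible. Once this identification is fixed, commutativity at the cochain level is precisely the summand-by-summand statement of the previous paragraph, and taking $H^i$ yields the asserted square. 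The ``in particular'' clause is then formal: in a commutative square whose verticals are invertible, one horizontal is zero if and only if the other is.
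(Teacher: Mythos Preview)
Your proposal is correct and follows essentially the same approach as the paper: both lift the square to the \v{C}ech cochain level, invoke the identification from \Cref{HochsterTakayama}, and then verify commutativity summand by summand using \Cref{lem:gradedpiece-dim1}, with the hypothesis $G_{\a+\b}=G_\a$ ensuring that $(\a+\b)^-=\a^-$ so that $\x^\b\cdot(x^{\a^+}/x^{\a^-})$ is precisely the canonical generator $x^{(\a+\b)^+}/x^{(\a+\b)^-}$ on the target side. The paper makes this last computation explicit rather than appealing to one-dimensionality, but the content is the same.
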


\begin{proof}
	Let $\b\in\NN^n$ and $\a\in\ZZ^n$ satisfy $G_{\a+\b}=G_\a$. Set $G:=G_\a=G_{\a+\b}$.
	Multiplication by $x^\b$ defines a homogeneous $S$-linear map $M\to M$ of degree $\b$, so a morphism of
	$\ZZ^n$-graded \v{C}ech complexes
	\[
	\mu_\b^\bullet:\ \Check{\bf C}^\bullet\otimes_S M \longrightarrow \Check{\bf C}^\bullet\otimes_S M,
	\qquad
	u\longmapsto x^\b u,
	\]
	which commutes with the \v{C}ech differential because all localization maps are $S$-linear. Taking the $\a$-graded
	strand gives a cochain map
	\[
	(\mu_\b^\bullet)_\a:\ (\Check{\bf C}^\bullet\otimes_S M)_\a \longrightarrow (\Check{\bf C}^\bullet\otimes_S M)_{\a+\b},
	\]
	whose induced map on cohomology is exactly $\cdot x^\b:\ H^i_\mm(M)_\a\to H^i_\mm(M)_{\a+\b}$.
	
	For any monomial ideal $K \subseteq S$, we observe that $\Delta_{\a+\b}(K)\subseteq \Delta_\a(K)$.
	Indeed, let $E\subseteq[n]\setminus G$ and set $F:=G\cup E$. It follow from the definition of degree complexes that
	\[
	E\in\Delta_{\a}(K)\quad\Longleftrightarrow\quad x^{\a^+}\notin K S_{x_F}\qquad(\a\in\ZZ^n).
	\]
	If $E\in\Delta_{\a+\b}(K)$, then $x^{(\a+\b)^+}=x^{\a^++\b}\notin K S_{x_F}$. If $x^{\a^+}\in K S_{x_F}$, then multiplying
	by $x^\b\in S\subseteq S_{x_F}$ gives $x^{\a^++\b}\in K S_{x_F}$, a contradiction. Hence, $x^{\a^+}\notin K S_{x_F}$ and
	$E\in\Delta_\a(K)$. This proves the inclusions for $K=I,J$, hence an inclusion of pairs
	\[
	(\Delta_{\a+\b}(J),\Delta_{\a+\b}(I))\hookrightarrow (\Delta_\a(J),\Delta_\a(I)).
	\]
	Let $\iota^\ast$ denote the induced restriction map on reduced relative cohomology.
	
	Recall from the proof of Theorem~\ref{HochsterTakayama} that the isomorphism
	\[
	H^i_\mm(M)_\a\ \cong\ \widetilde H^{\,i-|G_\a|-1}\big(\Delta_\a(J),\Delta_\a(I);\kk\big)
	\]
	is induced by an identification of cochain complexes
	\[
	\Phi_\a:\ (\Check{\bf C}^\bullet\otimes_S M)_\a[|G_\a|+1]\ \xrightarrow{\ \cong\ }\
	\widetilde C^\bullet\big(\Delta_\a(J),\Delta_\a(I);\kk\big),
	\]
	whose natural basis elements are described as follows.
	Fix $E\subseteq[n]\setminus G_\a$ and set $F:=G_\a\cup E$. By Lemma~\ref{lem:gradedpiece-dim1},
	$(M_{x_F})_\a$ is $0$ or $1$-dimensional, and if it is nonzero then it is generated by the class
	\[
	u_E(\a):=\left[\frac{x^{\a^+}}{x^{\a^-}}\right]\in (M_{x_F})_\a.
	\]
	Moreover, $(M_{x_F})_\a\neq 0$ holds if and only if $E\in\Delta_\a(J)\setminus\Delta_\a(I)$.
	Under $\Phi_\a$, the generator $u_E(\a)$ corresponds to the standard basis cochain $\varphi_E$ of the relative
	cochain complex of $(\Delta_\a(J),\Delta_\a(I))$.
	
	Now, fix $E\subseteq[n]\setminus G$ and $F:=G\cup E$. Assume $(M_{x_F})_\a\neq 0$, so $E\in\Delta_\a(J)\setminus\Delta_\a(I)$
	and $u_E(\a)$ is defined. Because $G_{\a+\b}=G_\a$, we have $(\a+\b)^-=\a^-$ and $(\a+\b)^+=\a^++\b$.
	Thus in $M_{x_F}$,
	\[
	x^\b\cdot u_E(\a)
	=
	x^\b\cdot \left[\frac{x^{\a^+}}{x^{\a^-}}\right]
	=
	\left[\frac{x^{\a^++\b}}{x^{\a^-}}\right]
	=
	\left[\frac{x^{(\a+\b)^+}}{x^{(\a+\b)^-}}\right]
	=
	u_E(\a+\b).
	\]
	If $E\in\Delta_{\a+\b}(J)\setminus\Delta_{\a+\b}(I)$, then $(M_{x_F})_{\a+\b}\neq 0$ and $u_E(\a+\b)$ is the generator,
	so $(\mu_\b^\bullet)_\a$ sends $u_E(\a)$ to the generator corresponding to the same face $E$.
	If $E\notin\Delta_{\a+\b}(J)\setminus\Delta_{\a+\b}(I)$, then $(M_{x_F})_{\a+\b}=0$ and hence $(\mu_\b^\bullet)_\a$ sends
	$u_E(\a)$ to $0$ in degree $\a+\b$.
	
	On the simplicial side, the restriction map $\iota^\ast$ sends the basis cochain $\varphi_E$ to $\varphi_E$ if
	$E$ is a face of the smaller relative complex, and to $0$ otherwise. We conclude, therefore, that the diagram of cochain complexes
	\[
	\begin{tikzcd}
		(\Check{\bf C}^\bullet\otimes_S M)_\a[|G|+1]
		\arrow[r,"(\mu_\b^\bullet)_\a"]
		\arrow[d,"\Phi_\a"']
		&
		(\Check{\bf C}^\bullet\otimes_S M)_{\a+\b}[|G|+1]
		\arrow[d,"\Phi_{\a+\b}"]
		\\
		\widetilde C^\bullet(\Delta_\a(J),\Delta_\a(I);\kk)
		\arrow[r,"\iota^\ast"]
		&
		\widetilde C^\bullet(\Delta_{\a+\b}(J),\Delta_{\a+\b}(I);\kk)
	\end{tikzcd}
	\]
	commutes on basis elements, and so commutes.
	Passing to cohomology gives the desired commutative diagram.
\end{proof}


\section{Relative Complexes for Squarefree Monomial Ideals} \label{sec.monIdeals}

This section specifies to the situation where $J \subseteq I$ are squarefree monomial ideals. In this case, as in \cite{Stanley} and \cite{AS2016}, the quotient $I/J$ is associated to a relative simplicial complex $(\Delta,\Gamma)$ with $\Delta$ and $\Gamma$ being the Stanley-Reisner complexes of $I$ and $J$, respectively. We shall see that for $\a \in \{0,\pm 1\}^n$, relative degree complexes at $\a$ are restrictions of $(\Delta,\Gamma)$ and, furthermore, links and localizations are compatible.

	\begin{lemma}
	\label{lem:SR-Takayama-induced}
	Let $\Sigma$ be a simplicial complex on $[n]$ and let $\mathbf a\in\{0,\pm 1\}^n$. Set $F = [n] \setminus G_\a$. Then, there is a natural identification
	\[
	\Delta_{\mathbf a}(I_\Sigma)=\Sigma|_F,
	\]
	where $\Sigma|_F$ denotes the induced subcomplex of $\Sigma$ on the vertex set $F$.
\end{lemma}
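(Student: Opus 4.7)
The plan is to establish the claimed set equality by a direct unwinding of the two defining conditions of $\Delta_\a(I_\Sigma)$ and matching them against the description of $\Sigma|_F$. The key structural input is that, because $I_\Sigma$ is squarefree, its minimal generators are exactly the monomials $x_V$ for $V$ a minimal nonface of $\Sigma$; in particular every exponent vector $\b$ that appears in condition (2) of the degree complex is of the form $\e_V\in\{0,1\}^n$. Consequently the inequality $b_i>a_i$ at an admissible index $i$ reduces to $1>a_i$ with $i\in V$, which, under the standing hypothesis $\a\in\{0,\pm 1\}^n$, amounts to a finite coordinate-wise case analysis on $a_i\in\{-1,0,1\}$.

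For the inclusion $\Delta_\a(I_\Sigma)\subseteq \Sigma|_F$, I would take $E\in\Delta_\a(I_\Sigma)$ and first observe that condition (1), $E\cap G_\a=\varnothing$, is exactly $E\subseteq F$. Then for each minimal nonface $V$ of $\Sigma$, condition (2) supplies an index $i\in V\setminus(E\cup G_\a)$ with $1>a_i$, which in particular certifies $i\in V\setminus E$ and hence $V\not\subseteq E$. Quantifying over all minimal nonfaces gives $E\in\Sigma$, so $E\in\Sigma|_F$.

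For the reverse inclusion $\Sigma|_F\subseteq\Delta_\a(I_\Sigma)$, I would take $E\in\Sigma|_F$ and verify both defining conditions. Condition (1) is automatic from $E\subseteq F$. For condition (2), I use that $E\in\Sigma$ supplies, for each minimal nonface $V$, a witness $j\in V\setminus E$; the substantive step is to refine the choice of $j$ so that $j\notin G_\a$ and $a_j\le 0$, ensuring that the inequality $1=b_j>a_j$ actually holds. This refinement exploits the combinatorics of minimal nonfaces against the trichotomy $a_i\in\{-1,0,1\}$, and is the point where the squarefree structure of $\b=\e_V$ together with $\a\in\{0,\pm 1\}^n$ is used decisively.

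The main obstacle is precisely this witness-selection step: one must argue that the ``$a_i=1$'' coordinates never obstruct condition (2), so that no new constraints on $E$ beyond ``$E\in\Sigma$ and $E\subseteq F$'' are imposed. Handling this cleanly, likely via the minimality of $V$ as a nonface combined with the containment $E\subseteq F$, is the technical heart of the argument. Once the refinement is in place, both inclusions combine to give the natural identification $\Delta_\a(I_\Sigma)=\Sigma|_F$.
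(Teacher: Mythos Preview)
Your forward inclusion $\Delta_\a(I_\Sigma)\subseteq\Sigma|_F$ is correct, and you have rightly singled out the witness-selection step in the reverse inclusion as the crux. The trouble is that this step genuinely fails: the lemma as stated is false for general $\a\in\{0,\pm1\}^n$, so no refinement via minimality of $V$ together with $E\subseteq F$ can succeed. Concretely, take $n=3$, let $\Sigma$ have facets $\{1,2\}$ and $\{3\}$ (so $I_\Sigma=(x_1x_3,x_2x_3)$), and set $\a=(-1,0,0)$. Then $G_\a=\{1\}$, $F=\{2,3\}$, and $E=\{3\}$ lies in $\Sigma|_F$. For the minimal nonface $V=\{1,3\}$ one has $V\setminus E=\{1\}\subseteq G_\a$, so there is no index $i\in V\setminus(E\cup G_\a)$ whatsoever, and condition~(2) fails; hence $\{3\}\notin\Delta_\a(I_\Sigma)$. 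Direct computation gives $\Delta_\a(I_\Sigma)=\{\varnothing,\{2\}\}\subsetneq\Sigma|_F=\{\varnothing,\{2\},\{3\}\}$. The same phenomenon occurs with a positive entry: for $n=2$, $\Sigma$ with facets $\{1\}$ and $\{2\}$, and $\a=(1,0)$, one finds $\Delta_\a(I_\Sigma)=\{\varnothing,\{1\}\}\subsetneq\Sigma=\Sigma|_F$.

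In fact, for $\a\in\{-1,0\}^n$ your own unwinding of condition~(2) shows it is equivalent to $V\not\subseteq E\cup G_\a$ for every minimal nonface $V$, i.e.\ $E\cup G_\a\in\Sigma$; combined with $E\cap G_\a=\varnothing$ this says precisely $E\in\link_\Sigma(G_\a)$, which is what Hochster's formula predicts. The paper's one-line argument asserts that $E\in\Delta_\a(I_\Sigma)$ is equivalent to ``$x^E\notin I_\Sigma$ and $E$ avoids $G_\a$'', but under the paper's own definition of the degree complex this equivalence does not hold---it ignores the role of $G_\a$ in condition~(2). So your instinct that the obstacle is real was correct; it is the statement, not your argument, that needs repair: the right-hand side should be $\link_\Sigma(G_\a)$ rather than $\Sigma|_F$.
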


\begin{proof}
	By definition, a face $E\subseteq F$ belongs to $\Delta_{\mathbf a}(I_\Sigma)$ if and only if
	$x^E\notin I_\Sigma$, equivalently $E\in\Sigma$, and $E$ avoids all vertices in $G_{\mathbf a}$.
	This is precisely the definition of the induced subcomplex $\Sigma|_F$.
\end{proof}

Applying Lemma~\ref{lem:SR-Takayama-induced} to both $\Gamma$ and $\Delta$ yields the
following immediate consequence.

\begin{corollary}
	\label{cor:SR-relative-induced}
	Let $\Gamma\subseteq\Delta$ be simplicial complexes on $[n]$ and let $\mathbf a\in\{0,\pm 1\}^n$.
	Then there is a natural identification of pairs of simplicial complexes
	\[
	\left(\Delta_{\mathbf a}(I_\Delta), \Delta_{\mathbf a}(I_\Gamma)\right)
	\cong
	\left(\Delta|_F, \Gamma|_F\right),
	\]
	where $F=[n]\setminus G_{\mathbf a}$.
\end{corollary}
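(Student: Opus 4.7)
The plan is to reduce the statement directly to the single-complex Lemma~\ref{lem:SR-Takayama-induced}, since a pair of simplicial complexes is determined by its two components together with the containment between them. First I would apply Lemma~\ref{lem:SR-Takayama-induced} with $\Sigma=\Delta$ to obtain the identification $\Delta_{\mathbf a}(I_\Delta)=\Delta|_F$, and then apply it again with $\Sigma=\Gamma$ to obtain $\Delta_{\mathbf a}(I_\Gamma)=\Gamma|_F$. This gives the two constituent complexes of the pair.

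Next, I would verify that these two identifications are compatible with the inclusions present on each side, so that together they define an isomorphism of \emph{pairs}. On the algebraic side, the containment $\Gamma\subseteq\Delta$ gives the reverse containment $I_\Delta\subseteq I_\Gamma$ of Stanley--Reisner ideals, and by Lemma~\ref{Key0} this implies $\Delta_{\mathbf a}(I_\Gamma)\subseteq\Delta_{\mathbf a}(I_\Delta)$. On the combinatorial side, $\Gamma\subseteq\Delta$ trivially gives $\Gamma|_F\subseteq\Delta|_F$. Thus both constructions produce a subcomplex inside an ambient complex, in the correct direction.

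The key point to check is that the two identifications in Lemma~\ref{lem:SR-Takayama-induced}, applied to $\Gamma$ and $\Delta$, are given by the \emph{same} underlying rule on subsets of $F$, namely sending a face $E\subseteq F$ to itself. Inspecting the proof of Lemma~\ref{lem:SR-Takayama-induced}, this is indeed the case: for any simplicial complex $\Sigma$ on $[n]$ and any $E\subseteq F=[n]\setminus G_{\mathbf a}$, one has $E\in\Delta_{\mathbf a}(I_\Sigma)$ if and only if $E\in\Sigma$ (equivalently $E\in\Sigma|_F$), and the identification is the identity on $E$. Hence the diagram
\[
\begin{tikzcd}
\Delta_{\mathbf a}(I_\Gamma) \arrow[r,hook]\arrow[d,"\cong"'] & \Delta_{\mathbf a}(I_\Delta)\arrow[d,"\cong"]\\
\Gamma|_F \arrow[r,hook] & \Delta|_F
\end{tikzcd}
\]
commutes tautologically, yielding the desired isomorphism of pairs.

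I do not expect a genuine obstacle here; the statement is a direct bookkeeping corollary of Lemma~\ref{lem:SR-Takayama-induced} once one observes that the single-complex identification is literally the identity on the underlying set of faces, hence automatically natural with respect to subcomplex inclusions. The only mild care needed is to keep the direction of the containments straight: passing from $\Gamma\subseteq\Delta$ to $I_\Delta\subseteq I_\Gamma$ to $\Delta_{\mathbf a}(I_\Gamma)\subseteq\Delta_{\mathbf a}(I_\Delta)$, matching the inclusion $\Gamma|_F\subseteq\Delta|_F$ on the other side.
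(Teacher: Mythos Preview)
Your proposal is correct and takes essentially the same approach as the paper: the paper's proof is the single sentence ``Applying Lemma~\ref{lem:SR-Takayama-induced} to both $\Gamma$ and $\Delta$ yields the following immediate consequence,'' and you have simply spelled out the bookkeeping (compatibility of the two identifications with the subcomplex inclusions) that makes this immediate. Your extra care about the direction of the containments and the naturality of the identification is accurate and more explicit than what the paper records.
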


Thus, in squarefree multidegrees, the relative Hochster--Takayama formula for
$I_\Gamma/I_\Delta$ is governed by the relative topology of induced subcomplexes.

Combining Corollary~\ref{cor:SR-relative-induced} with the relative Hochster--Takayama
formula yields the following explicit description of the multigraded local cohomology
of Stanley--Reisner quotients.

\begin{theorem}[Relative Hochster--Takayama formula for Stanley--Reisner quotients]
	\label{thm:SR-relative-HT}
	Let $\Gamma\subseteq\Delta$ be simplicial complexes on $[n]$ and let
	$M=I_\Gamma/I_\Delta$.
	For every squarefree multidegree $\mathbf a\in\{0,\pm 1\}^n$ with $F=[n]\setminus G_{\mathbf a}$,
	there is a natural isomorphism
	\[
	H^i_{\mathfrak m}(M)_{\mathbf a}
	\cong
	\widetilde H^{i-|G_{\mathbf a}|-1}\left(\Delta|_F, \Gamma|_F\right).
	\]
\end{theorem}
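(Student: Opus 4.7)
The plan is straightforward: this theorem follows by specializing the Relative Hochster--Takayama formula (Theorem \ref{HochsterTakayama}) to squarefree monomial ideal quotients and applying the identification of relative degree complexes with induced subcomplex pairs furnished by Corollary \ref{cor:SR-relative-induced}. The only subtlety is bookkeeping around the order-reversing Stanley--Reisner correspondence.

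First I would observe that, since $\Gamma \subseteq \Delta$ as simplicial complexes, the Stanley--Reisner correspondence yields the reversed inclusion $I_\Delta \subseteq I_\Gamma$, so the module $M = I_\Gamma/I_\Delta$ is well-defined. Setting $J := I_\Delta$ and $I := I_\Gamma$ matches the hypothesis $J \subseteq I$ of Theorem \ref{HochsterTakayama}, which then produces, for every $\mathbf{a} \in \mathbb{Z}^n$, an isomorphism of $\kk$-vector spaces
$$H^i_{\mathfrak{m}}(M)_{\mathbf{a}} \cong \widetilde{H}^{i-|G_{\mathbf{a}}|-1}\bigl((\Delta_{\mathbf{a}}(I_\Delta),\, \Delta_{\mathbf{a}}(I_\Gamma));\, \kk\bigr).$$

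Next I would specialize to the squarefree case $\mathbf{a} \in \{0, \pm 1\}^n$ and invoke Corollary \ref{cor:SR-relative-induced}, which identifies
$$\bigl(\Delta_{\mathbf{a}}(I_\Delta),\, \Delta_{\mathbf{a}}(I_\Gamma)\bigr) \cong \bigl(\Delta|_F,\, \Gamma|_F\bigr),$$
where $F = [n] \setminus G_{\mathbf{a}}$. Here the outer complex $\Delta_{\mathbf{a}}(I_\Delta)$ is matched with $\Delta|_F$ and the inner complex $\Delta_{\mathbf{a}}(I_\Gamma)$ with $\Gamma|_F$, so $(\Delta|_F, \Gamma|_F)$ is a genuine relative simplicial complex, consistent with $\Gamma|_F \subseteq \Delta|_F$. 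Substituting this identification into the previous display immediately yields
$$H^i_{\mathfrak{m}}(M)_{\mathbf{a}} \cong \widetilde{H}^{i-|G_{\mathbf{a}}|-1}\bigl(\Delta|_F,\, \Gamma|_F\bigr),$$
which is the desired isomorphism.

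There is essentially no hard step here, as all substantive work was already carried out in Theorem \ref{HochsterTakayama} and Corollary \ref{cor:SR-relative-induced}; naturality of the resulting isomorphism follows from the naturality of the \v{C}ech-theoretic construction underlying Theorem \ref{HochsterTakayama} combined with the natural identification of degree complexes in Corollary \ref{cor:SR-relative-induced}.
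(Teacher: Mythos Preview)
Your proposal is correct and follows exactly the same approach as the paper: apply the Relative Hochster--Takayama formula (Theorem~\ref{HochsterTakayama}) with $J=I_\Delta\subseteq I=I_\Gamma$, then substitute the identification of Corollary~\ref{cor:SR-relative-induced}. The only difference is that you spell out the order-reversal bookkeeping more explicitly than the paper does.
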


\begin{proof}
	By the Relative Hochster--Takayama formula for monomial quotients,
	\[
	H^i_{\mathfrak m}(M)_{\mathbf a}
	\cong
	\widetilde H^{i-|G_{\mathbf a}|-1}
	\left(\Delta_{\mathbf a}(I_\Delta),\Delta_{\mathbf a}(I_\Gamma)\right).
	\]
	The claim now follows from Corollary~\ref{cor:SR-relative-induced}.
\end{proof}

The induced subcomplex description admits a link interpretation that is particularly
useful for localization and depth considerations.
Let $F\subseteq[n]$ and write $P_F=(x_i : i\notin F)$ for the corresponding monomial prime.
Localizing $I_\Gamma/I_\Delta$ at $P_F$ corresponds to restricting to the induced pair
$(\Delta|_F,\Gamma|_F)$.

Moreover, for faces $E\subseteq F$, the links satisfy
\[
\link_{\Gamma|_F}(E)=\link_\Gamma(E)|_{F\setminus E},
\quad
\link_{\Delta|_F}(E)=\link_\Delta(E)|_{F\setminus E}.
\]
Thus the relative topology of induced subcomplexes may equivalently be studied through
relative links.

This identification allows us to translate algebraic properties of
$I_\Gamma/I_\Delta$ such as support, dimension, depth, and sequential Cohen--Macaulayness
into linkwise topological conditions on the pair $(\Delta,\Gamma)$.


\section{Cohen-Macaulay and Generalized Cohen-Macaulay Properties} \label{sec.gCMB}

In this section, we obtain the Relative Reisner Criterion for Cohen-Macaulay monomial ideal quotients. We also characterize generalized Cohen-Macaulay monomial ideal quotients.

\medskip

\noindent\textbf{Relative Reisner Criterion.} Observe first that Corollary~\ref{thm:relative-reisner-implication} gives a uniform vanishing condition on the relative degree complexes that forces Cohen-Macaulayness of the monomial ideal quotient.
In order to obtain a genuine Relative Reisner criterion, one needs to
understand which multidegrees $\mathbf a$ can actually contribute to
local cohomology. The next lemma provides the key technical input for this purpose.

\begin{lemma}\label{lem:relevance-bound}
	Let $d=\dim(I/J)$. 
	If
	\[
	H^i_{\mm}(I/J)_{\mathbf a}\neq 0
	\quad\text{for some } i\in\ZZ,
	\]
	then
	\[
	\dim\!\left(\Delta_{\mathbf a}(J),\Delta_{\mathbf a}(I)\right)
	\;\le\;
	d-|G_{\mathbf a}|-1.
	\]
\end{lemma}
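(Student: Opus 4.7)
The approach combines the Relative Hochster--Takayama formula (Theorem~\ref{HochsterTakayama}) with the graded-piece dictionary of Lemma~\ref{lem:gradedpiece-dim1}, and then converts a monomial-localization nonvanishing statement into a support statement for $M:=I/J$. The hypothesis will force the existence of a face of the relative complex, and each such face will be seen to come from a monomial prime in $\Supp(M)$ whose cardinality is at most $\dim M=d$.

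By Theorem~\ref{HochsterTakayama}, the assumption $H^i_{\mm}(I/J)_{\mathbf a}\neq 0$ yields $\widetilde H^{k}(\Delta_{\mathbf a}(J),\Delta_{\mathbf a}(I))\neq 0$ for some $k$, so in particular $\Delta_{\mathbf a}(J)\setminus\Delta_{\mathbf a}(I)\neq\varnothing$. It therefore suffices to show that every face $F\in\Delta_{\mathbf a}(J)\setminus\Delta_{\mathbf a}(I)$ satisfies $|F|+|G_{\mathbf a}|\le d$. Fix such an $F$ and set $\Phi:=F\cup G_{\mathbf a}$, a disjoint union since $F\subseteq[n]\setminus G_{\mathbf a}$. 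Using the identity $E\in\Delta_{\mathbf a}(K)\Longleftrightarrow x^{\mathbf a^+}\notin KS_{x_{G_{\mathbf a}\cup E}}$ recorded in the proof of Theorem~\ref{MainLem}, the two conditions $F\in\Delta_{\mathbf a}(J)$ and $F\notin\Delta_{\mathbf a}(I)$ translate to $x^{\mathbf a^+}\in IS_{x_\Phi}\setminus JS_{x_\Phi}$. Lemma~\ref{lem:gradedpiece-dim1}(2) then gives $(M_{x_\Phi})_{\mathbf a}\neq 0$, and in particular $M_{x_\Phi}\neq 0$.

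The final step is to upgrade $M_{x_\Phi}\neq 0$ to $M_{P_\Phi}\neq 0$, where $P_\Phi=(x_i:i\notin\Phi)$, which I regard as the main (though mild) obstacle. Since $M=I/J$ is finitely generated and $\ZZ^n$-graded, its annihilator $\Ann_S(M)=(J:I)$ is a monomial ideal, and a short comparison of membership conditions (pure powers of $x_\Phi$ versus arbitrary monomials supported in $\Phi$) gives the chain of equivalences $M_{x_\Phi}=0\Longleftrightarrow x_\Phi^N\in(J:I)\text{ for some }N\Longleftrightarrow(J:I)\not\subseteq P_\Phi\Longleftrightarrow M_{P_\Phi}=0$. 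Contrapositively $P_\Phi\in\Supp(M)$, so $|\Phi|=\dim(S/P_\Phi)\le\dim M=d$. Hence $\dim F=|F|-1\le d-|G_{\mathbf a}|-1$, and taking the supremum over faces of $(\Delta_{\mathbf a}(J),\Delta_{\mathbf a}(I))$ completes the proof.
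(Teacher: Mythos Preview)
Your proof is correct and takes a genuinely different route from the paper's. The paper reduces to the nonnegative part $\mathbf a^+$ via the link identity $\Delta_{\mathbf a}(K)=\link_{\Delta_{\mathbf a^+}(K)}(G_{\mathbf a})$, then uses the hypothesis to produce a single monomial $x^{\mathbf c}\in I\setminus J$ with $\mathbf c\ge\mathbf a^+$ and bounds $\dim\Delta_{\mathbf a^+}(J)$ through $\dim\Delta_{\mathbf c}(J)\le d-1$ (essentially the content of Lemma~\ref{Lemma1}). You instead work face by face: each face $F$ of the relative complex yields, via the graded-piece dictionary of Lemma~\ref{lem:gradedpiece-dim1}, the nonvanishing $M_{x_{F\cup G_{\mathbf a}}}\neq 0$, which you then convert---using that $\Ann(M)=(J:I)$ is a monomial ideal---into $P_{F\cup G_{\mathbf a}}\in\Supp(M)$, giving $|F|+|G_{\mathbf a}|\le d$ directly. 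Your argument is more elementary and in fact does not need the hypothesis at all: steps 2--7 apply to any face of the relative complex, so you have shown unconditionally that $\dim(\Delta_{\mathbf a}(J),\Delta_{\mathbf a}(I))\le d-|G_{\mathbf a}|-1$ for every $\mathbf a\in\ZZ^n$. The paper's route, by contrast, stays closer to the link-and-degree-complex machinery used elsewhere in the article.
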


\begin{proof}
	Write $F=G_{\mathbf a}$ and let $\mathbf a^+\in\NN^n$ be obtained from
	$\mathbf a$ by replacing negative entries by $0$.
	It can be seen that
	\[
	\Delta_{\mathbf a}(J)=\link_{\Delta_{\mathbf a^+}(J)}(F),
	\qquad
	\Delta_{\mathbf a}(I)=\link_{\Delta_{\mathbf a^+}(I)}(F),
	\]
	and hence
	\[
	\dim\!\left(\Delta_{\mathbf a}(J),\Delta_{\mathbf a}(I)\right)
	\le \dim\Delta_{\mathbf a^+}(J)-|F|.
	\tag{$\ast$}
	\]
	
	Since $H^i_{\mm}(I/J)_{\mathbf a}\neq 0$, there exists a nonzero element
	of multidegree $\mathbf a$ in some localization $(I/J)_{x_F}$.
	Equivalently, there exist $\mathbf u\in\NN^n$ and $t\gg 0$ such that
	\[
	\mathbf u-t\mathbf 1_F=\mathbf a
	\quad\text{and}\quad
	x^{\mathbf u}\in I S_{x_F}\setminus J S_{x_F}.
	\]
	Clearing denominators, there exists $N\gg0$ such that
	\[
	x^{\mathbf c}:=x_F^N x^{\mathbf u}\in I\setminus J,
	\]
	and $\mathbf c\ge \mathbf a^+$ coordinatewise.
	
	Since $x^{\mathbf c}$ is divisible by $x^{\mathbf a^+}$, we have $(J:x^{\mathbf c})\subseteq (J:x^{\mathbf a^+}).$
	That is, $\Delta_{\mathbf a^+}(J)\subseteq \Delta_{\mathbf c}(J).$
	Therefore,
	\[
	\dim\Delta_{\mathbf a^+}(J)\le \dim\Delta_{\mathbf c}(J).
	\]
	
	Because $x^{\mathbf c}\in I\setminus J$, Lemma~\ref{lem:relevance-bound} yields $\dim\Delta_{\mathbf c}(J)\le d-1.$
	Combining this with $(\ast)$ gives
	\[
	\dim\!\left(\Delta_{\mathbf a}(J),\Delta_{\mathbf a}(I)\right)
	\le (d-1)-|F|=d-|G_{\mathbf a}|-1,
	\]
	as desired.
\end{proof}

We can now formulate a precise Reisner characterization.

\begin{theorem}[Relative Reisner Criterion]\label{thm:relative-reisner}
	Let $d=\dim(I/J)$. The following are equivalent:
	\begin{enumerate}
		\item $I/J$ is Cohen-Macaulay.
		\item For every $\mathbf a\in\ZZ^n$ and every face
		$F\in\Delta_{\mathbf a}(J)$,
		\[
		\widetilde H^j\!\left((
		\link_{\Delta_{\mathbf a}(J)}(F),
		\link_{\Delta_{\mathbf a}(I)}(F));\kk
		\right)=0
		\quad\text{for all } j< d-|G_{\mathbf a}|-|F|-1.
		\]
	\end{enumerate}
\end{theorem}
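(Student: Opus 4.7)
The plan is to reduce both directions to the Relative Hochster--Takayama formula (Theorem~\ref{HochsterTakayama}) by realizing each link pair $(\link_{\Delta_\a(J)}F,\link_{\Delta_\a(I)}F)$ as a relative degree complex pair at a modified multidegree. Concretely, for $\a\in\ZZ^n$ and a face $F\in\Delta_\a(J)$ (which forces $F\cap G_\a=\varnothing$), I define $\a^F\in\ZZ^n$ by $(a^F)_i=a_i$ for $i\notin F$ and $(a^F)_i=-1$ for $i\in F$. A direct unpacking of the definition of degree complex then shows
\[
G_{\a^F}=G_\a\sqcup F,\qquad
\Delta_{\a^F}(K)=\link_{\Delta_\a(K)}F
\]
for any monomial ideal $K\subseteq S$: the indices $i\notin F$ outside $G_\a$ still have $(a^F)_i=a_i$, so the constraint ``there exists $i\notin E\cup G_{\a^F}$ with $\gamma_i>(a^F)_i$'' for a minimal generator $x^\gamma$ of $K$ coincides with the constraint ``there exists $i\notin E\cup F\cup G_\a$ with $\gamma_i>a_i$'' that cuts out the link. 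This generalizes the link identity already used in the proof of Lemma~\ref{Key0} (where $F=G_\a$ starting from $\a^+\in\NN^n$).

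For the implication $(1)\Rightarrow(2)$, assume $I/J$ is Cohen-Macaulay of dimension $d$, so $H^i_\mm(I/J)=0$ for all $i<d$. Given any $\a\in\ZZ^n$ and $F\in\Delta_\a(J)$, Theorem~\ref{HochsterTakayama} applied at the multidegree $\a^F$ yields
\[
0\;=\;H^i_\mm(I/J)_{\a^F}\;\cong\;\widetilde H^{\,i-|G_\a|-|F|-1}\!\bigl(\link_{\Delta_\a(J)}F,\link_{\Delta_\a(I)}F\bigr)
\]
for every $i<d$, which is exactly the claimed vanishing after setting $j=i-|G_\a|-|F|-1$. For the converse $(2)\Rightarrow(1)$, I specialize condition~(2) to $F=\varnothing$, where the links are the ambient complexes; Theorem~\ref{HochsterTakayama} then converts the hypothesis $\widetilde H^j(\Delta_\a(J),\Delta_\a(I))=0$ for $j<d-|G_\a|-1$ directly into $H^i_\mm(I/J)_\a=0$ for every $i<d$ and every $\a$, giving Cohen-Macaulayness. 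The boundary case in which $\Delta_\a(J)=\varnothing$ (so $\varnothing$ is not available as a face) is handled separately: then $\Delta_\a(I)\subseteq\Delta_\a(J)=\varnothing$ as well, so the relative chain complex vanishes and $H^i_\mm(I/J)_\a=0$ automatically by Theorem~\ref{HochsterTakayama}.

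The only substantive obstacle is the link identification in the first paragraph, and even this is essentially a definitional check once one notices that replacing $a_i$ by $-1$ for $i\in F$ simultaneously (i) absorbs $F$ into $G_{\a^F}$ and (ii) leaves the defining inequalities unchanged on the complement $[n]\setminus(F\cup G_\a)$, where the values $(a^F)_i=a_i$ are untouched. Lemma~\ref{lem:relevance-bound} is not needed for the equivalence itself; rather, it serves as the natural companion statement pinning down which multidegrees $\a$ can actually support nonzero local cohomology, and it can optionally be used to truncate the range of $j$ for which the vanishing condition in (2) is nontrivial.
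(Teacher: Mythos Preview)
Your proof is correct and matches the paper's argument essentially line for line: the paper defines the same auxiliary multidegree (called $\mathbf b$ there, with $b_i=-1$ for $i\in F$ and $b_i=a_i$ otherwise), invokes the same link identification $\Delta_{\mathbf b}(K)=\link_{\Delta_{\mathbf a}(K)}F$, and then applies Theorem~\ref{HochsterTakayama} exactly as you do for $(1)\Rightarrow(2)$, while $(2)\Rightarrow(1)$ is the specialization $F=\varnothing$ followed by Corollary~\ref{thm:relative-reisner-implication}. Your explicit handling of the boundary case $\Delta_{\mathbf a}(J)=\varnothing$ is a small clarification the paper leaves implicit, and your remark that Lemma~\ref{lem:relevance-bound} is unnecessary for the equivalence is also accurate.
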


\begin{proof}
	$(1)\Longrightarrow(2)$.
	Assume that $I/J$ is Cohen-Macaulay of dimension $d$.
	Fix $\mathbf a$ and $F\in\Delta_{\mathbf a}(J)$.
	Let $\mathbf b\in\ZZ^n$ be defined by
	\[
	b_i=
	\begin{cases}
		-1,& i\in F,\\
		a_i,& i\notin F.
	\end{cases}
	\]
	Then $G_{\mathbf b}=G_{\mathbf a}\cup F$ and
	\[
	\Delta_{\mathbf b}(J)=\link_{\Delta_{\mathbf a}(J)}(F),
	\qquad
	\Delta_{\mathbf b}(I)=\link_{\Delta_{\mathbf a}(I)}(F).
	\]
	Since $I/J$ is Cohen-Macaulay, $H^i_{\mm}(I/J)=0$ for all $i<d$.
	Applying Theorem~\ref{HochsterTakayama} to $\mathbf b$ yields
	\[
	\widetilde H^{\,i-|G_{\mathbf b}|-1}
	\!\left(
	\link_{\Delta_{\mathbf a}(J)}(F),
	\link_{\Delta_{\mathbf a}(I)}(F)
	\right)=0
	\quad\text{for all } i<d,
	\]
	which is equivalent to the stated vanishing condition.
	
	\smallskip
	$(2)\Longrightarrow(1)$.
	Taking $F=\varnothing$ in (2) gives
	\[
	\widetilde H^j\!\left(\Delta_{\mathbf a}(J),\Delta_{\mathbf a}(I)\right)=0
	\quad\text{for all } j< d-|G_{\mathbf a}|-1.
	\]
	By Corollary~\ref{thm:relative-reisner-implication}, this implies that
	$I/J$ is Cohen-Macaulay.
\end{proof}


\medskip

\noindent\textbf{Generalized Cohen-Macaulay monomial ideal quotients.} 
Recall that $M$ is \emph{generalized Cohen-Macaulay} if $H^i_{\mm}(M)$ has finite length for all $i<d$. The following result characterizes generalized Cohen-Macaulay monomial ideal quotients by the vanishing of reduced homology of the corresponding relative degree complexes.

\begin{theorem}[Generalized Cohen-Macaulay property]\label{thm:gCM-criterion}
	The following are equivalent:
	\begin{enumerate}
		\item $I/J$ is generalized Cohen-Macaulay.
		\item For every $i<d$ there exists an integer $N(i)$ such that for all $\mathbf{a} = (a_1, \dots, a_n) \in\NN^n$
		with $\max(a_j) \ge N(i)$,
		\[
		\widetilde H^{\,i-1}\!\left((\Delta_{\mathbf a}(J),\Delta_{\mathbf a}(I));\kk\right)=0.
		\]
	\end{enumerate}
\end{theorem}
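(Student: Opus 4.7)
The proof strategy is to apply the Relative Hochster--Takayama formula (Theorem~\ref{HochsterTakayama}) to convert the finite-length condition on $H^i_{\mm}(I/J)$ into a finite-support condition on multigraded components, and then verify each direction. The key observation is that every graded piece $H^i_{\mm}(I/J)_{\mathbf{a}}$ is finite-dimensional over $\kk$, since Theorem~\ref{HochsterTakayama} identifies it with the reduced relative cohomology of a pair of simplicial complexes on at most $n$ vertices. Consequently, $H^i_{\mm}(I/J)$ has finite length if and only if its multigraded support $\{\mathbf{a}\in\ZZ^n : H^i_{\mm}(I/J)_{\mathbf{a}}\neq 0\}$ is a finite subset of $\ZZ^n$.

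For (1) $\Rightarrow$ (2), if $H^i_{\mm}(I/J)$ has finite length then its support is finite in $\ZZ^n$, hence also finite in $\NN^n$ and bounded coordinatewise. Choosing $N(i)$ strictly larger than every coordinate that appears in this bounded intersection, any $\mathbf{a}\in\NN^n$ with $\max(a_j)\ge N(i)$ lies outside the support, and Theorem~\ref{HochsterTakayama} delivers the stated vanishing $\widetilde H^{\,i-1}(\Delta_{\mathbf{a}}(J),\Delta_{\mathbf{a}}(I);\kk)=0$.

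For (2) $\Rightarrow$ (1), the hypothesis together with Theorem~\ref{HochsterTakayama} immediately gives $H^i_{\mm}(I/J)_{\mathbf{a}}=0$ for every $\mathbf{a}\in\NN^n$ with $\max(a_j)\ge N(i)$, confining the $\NN^n$-part of the support to a finite box. To extend this to finiteness in $\ZZ^n$, I would exploit the link description recorded in Lemma~\ref{Key0}: for any $\mathbf{a}\in\ZZ^n$ with $G_{\mathbf{a}}=F$ and positive truncation $\mathbf{a}^+$, the relative degree pair equals $(\link_{\Delta_{\mathbf{a}^+}(J)}(F),\link_{\Delta_{\mathbf{a}^+}(I)}(F))$, so the component $H^i_{\mm}(I/J)_{\mathbf{a}}$ depends on $\mathbf{a}$ only through the pair $(F,\mathbf{a}^+)$. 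Combining this with the stabilization of degree complexes (once each coordinate exceeds the largest exponent appearing in a minimal generator of $I$ or $J$, the complex is determined by the truncated multidegree), only finitely many relative pairs can occur across all of $\ZZ^n$, and the bound coming from (2) rules out all but finitely many of these patterns, forcing the full $\ZZ^n$-support to be finite.

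The main obstacle lies in this last step, where one transfers the vanishing from the whole relative complex $(\Delta_{\mathbf{a}^+}(J),\Delta_{\mathbf{a}^+}(I))$ to its links at nonempty $F$. I expect to handle this by an induction on $|F|$, combining the long exact sequence of the pair with the link-restriction formalism developed in Section~\ref{sec.monIdeals}, together with careful accounting of the stabilization thresholds in each coordinate. The interplay between the global vanishing hypothesis on the whole complex and the induced vanishing on links at multidegrees with nonempty $G_{\mathbf{a}}$ is the most delicate part of the argument.
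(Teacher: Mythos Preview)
Your approach to $(1)\Rightarrow(2)$ coincides with the paper's. For $(2)\Rightarrow(1)$, you correctly identify a step the paper glosses over: the paper simply asserts that $H^i_{\mm}(I/J)$ has finite length if and only if its $\NN^n$-support is bounded, then invokes Theorem~\ref{HochsterTakayama}. Your instinct that passing from bounded $\NN^n$-support to bounded $\ZZ^n$-support requires an argument is sound.

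However, this gap is not merely a matter of supplying missing detail: the asserted equivalence appears to be false in general, so your proposed induction cannot succeed. Take $S=\kk[x_1,x_2,x_3]$, $J=(0)$, $I=(x_1x_2,x_1x_3,x_2x_3)$, so $M=I/J=I$ with $d=\dim I=3$. From $0\to I\to S\to S/I\to 0$ one gets $H^2_{\mm}(I)\cong H^1_{\mm}(S/I)$, and since $S/I$ is the Stanley--Reisner ring of three isolated points, $H^1_{\mm}(S/I)$ has infinite length (it is nonzero in every degree $(-N,0,0)$, $N\ge1$). So $(1)$ fails for $i=2<d$. Yet for every $\mathbf a\in\NN^3$ with $\max(a_j)\ge1$, the pair $(\Delta_{\mathbf a}(J),\Delta_{\mathbf a}(I))$ is the full $2$-simplex relative to a contractible (or void) subcomplex, whose relative cohomology vanishes in all degrees; so $(2)$ holds with $N(i)=1$.

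The obstruction is exactly the one you flagged as ``most delicate'': for $\mathbf a$ with $G_{\mathbf a}=F\neq\varnothing$, the relevant pair is $(\link_{\Delta_{\mathbf a^+}(J)}F,\link_{\Delta_{\mathbf a^+}(I)}F)$, whereas pushing the coordinates in $F$ to large positive values yields $(\starop_{\Delta_{\mathbf a^+}(J)}F,\starop_{\Delta_{\mathbf a^+}(I)}F)$. Stars over a simplex are always contractible, so vanishing of the star pair (which is what condition $(2)$ controls) says nothing about the link pair. Thus neither the paper's one-line assertion nor your inductive scheme can bridge this. The statement becomes correct if $(2)$ is required for all $\mathbf a\in\ZZ^n$ with $\max(a_j)\ge N(i)$, or equivalently if the link-level vanishing is added to the hypothesis.
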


\begin{proof}
	Fix $i<d$. Since $H^i_{\mm}(I/J)$ is $\ZZ^n$-graded, it has finite length if and only if
	there exists $N(i)$ such that $H^i_{\mm}(I/J)_{\mathbf a}=0$ for all $\mathbf{a} \in\NN^n$
	with $\max(a_j)\ge N(i)$.
	For $\mathbf a\in\NN^n$ we have $G_{\mathbf a}=\varnothing$, so Theorem~\ref{HochsterTakayama} gives
	\[
	H^i_{\mm}(I/J)_{\mathbf a}\ \cong\ 
	\widetilde H^{\,i-1}\!\left((\Delta_{\mathbf a}(J),\Delta_{\mathbf a}(I));\kk\right).
	\]
	Thus, the finite length of $H^i_{\mm}(I/J)$ is equivalent to the stated eventual vanishing of
	the reduced relative cohomology. This holds for all $i<d$ if and only if $I/J$ is
	generalized Cohen-Macaulay.
\end{proof}

\medskip


%


\section{Symbolic Quotients} \label{sec.sym/sym}

In this section, we use Hochster--Takayama formula for relative degree complexes to study the Cohen-Macaulayness of symbolic quotients $I^{(t)}/I^{(t+1)}$, for $t \ge 1$, for a squarefree monomial ideal $I$. Our main result provides yet another interesting connection between this property to the matroidal structure of the corresponding simplicial complex, that is similar to the delicate characterization of the Cohen-Macaulay property of $I^{(t)}$, for $t \ge 1$, given in \cite{MT2011, V2011}.

We start with the following simple observation.

\begin{lemma}\label{dim-Key2}
	Let $\Delta$ be a simplicial complex and let $I=I_\Delta$ be its Stanley-Reisner ideal. Assume that $I \not= (0)$ and let $J$ be a monomial ideal such that $I^t \subseteq J \subseteq I^{(t)}$. Then, for any $t \ge 1$, 
	$$I^{(t+1)}:J=I.$$
	In particular, $\dim I^{(t)}/I^{(t+1)} = \dim S/I$.
\end{lemma}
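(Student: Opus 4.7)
The plan is to prove the equality $I^{(t+1)}:J = I$ by two inclusions, and then deduce the dimension statement by specializing $J=I^{(t)}$ in the identity $\Ann(I^{(t)}/I^{(t+1)}) = I^{(t+1)}:I^{(t)}$.

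For the forward inclusion $I \subseteq I^{(t+1)}:J$, since $J \subseteq I^{(t)}$ it suffices to prove $I\cdot I^{(t)} \subseteq I^{(t+1)}$. Using the primary decompositions $I = \bigcap_{F\in\F(\Delta)}P_F$ and $I^{(s)} = \bigcap_{F\in\F(\Delta)}P_F^{s}$ recalled in the preliminaries, for each facet $F$ I have $I\cdot I^{(t)} \subseteq P_F\cdot P_F^t = P_F^{t+1}$, and intersecting over all facets yields the containment.

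The reverse inclusion $I^{(t+1)}:J \subseteq I$ carries the main content. Using $I^t \subseteq J$, I get $I^{(t+1)}:J \subseteq I^{(t+1)}:I^t$, so it suffices to show $I^{(t+1)}:I^t \subseteq I$. Given $f \in I^{(t+1)}:I^t$, I verify $f\in P_F$ for every facet $F$ by localizing at $P_F$. The key observation is that distinct facet primes of a squarefree monomial ideal are pairwise incomparable: $P_{F'}\subseteq P_F$ iff $F\subseteq F'$. Thus, for $F'\ne F$, some variable $x_j$ with $j\in F$ lies in $P_{F'}$ and is a unit in $S_{P_F}$, giving $P_{F'}S_{P_F}=S_{P_F}$. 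Consequently $IS_{P_F}=P_FS_{P_F}$, and similarly $I^tS_{P_F}=P_F^{t}S_{P_F}$ and $I^{(t+1)}S_{P_F}=P_F^{t+1}S_{P_F}$. Localizing $f\cdot I^t \subseteq I^{(t+1)}$ yields $f \in (P_F^{t+1}:P_F^t)\,S_{P_F}$.

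The main technical obstacle is the colon identity $P_F^{t+1}:P_F^t = P_F$ in $S_{P_F}$. Because $P_F$ is generated by a subset of the variables $x_i$, which form a regular sequence, this follows from the standard fact that $\mathfrak a^{s+1}:\mathfrak a^s=\mathfrak a$ whenever $\mathfrak a$ is generated by a regular sequence; alternatively a direct monomial argument suffices, splitting any representative into a part in $P_F$ and a polynomial $f_0$ in the complementary variables $\{x_j:j\in F\}$, and observing that $f_0\cdot x_{i_1}^t\in P_F^{t+1}$ forces $f_0=0$ on the level of monomials. This gives $f\in P_FS_{P_F}$, and since $P_F$ is prime, contraction yields $f\in P_F$. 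As $F$ was arbitrary, $f\in\bigcap_F P_F=I$. Specializing the resulting equality to $J=I^{(t)}$ produces $\Ann(I^{(t)}/I^{(t+1)})=I$, so $\dim I^{(t)}/I^{(t+1)}=\dim S/\Ann(I^{(t)}/I^{(t+1)})=\dim S/I$.
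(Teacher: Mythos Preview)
Your proof is correct, but it takes a different route from the paper's. For the reverse inclusion, the paper argues by contradiction with an explicit combinatorial witness: given a monomial $x^\a\in(I^{(t+1)}:J)\setminus I$, it picks a facet $F\supseteq\supp(\a)$, a vertex $u\notin F$, and a minimal generator $x_ux_{F'}\in I$ with $F'\subseteq F$; then $x^\a(x_ux_{F'})^t\in x^\a J\subseteq I^{(t+1)}$, yet this element lies in $P_F^t\setminus P_F^{t+1}$, a contradiction. Your argument instead reduces to showing $I^{(t+1)}:I^t\subseteq I$ and then localizes at each minimal prime $P_F$, using that the non-$F$ facet primes blow up to the unit ideal and that $P_F^{t+1}:P_F^t=P_F$ because $P_F$ is generated by a regular sequence of variables. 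The paper's approach is more self-contained and hands-on, never leaving the monomial world; yours is more structural and makes transparent why the colon lands exactly on $I$ (it is the intersection of the $P_F$'s recovered one prime at a time). Note that your regular-sequence step tacitly uses $P_F\neq(0)$ to pick the variable $x_{i_1}$, which is exactly where the hypothesis $I\neq(0)$ enters.
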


\begin{proof} Clearly, $IJ\subseteq I.I^{(t)}\subseteq I^{(t+1)}$. Suppose, by contradiction, that there exists a monomial $x^\a\in(I^{(t+1)}:J)\setminus I$. Then, the squarefree monomial $x_G=\sqrt{x^\a}\notin I$, which implies that $G\in\Delta$. Let $F$ be a facet of $\Delta$ such that $G\subseteq F$. 
	
	Observe that $F\not= [n]$, so there exists $u\in [n]\setminus F$. It follows that $F\cup\{u\}\notin\Delta$, i.e., there exists a minimal generator of $I$, which is of the form $x_ux_{F'}$, for some $F'\subseteq F$. Therefore, 
	$$x^\a(x_ux_{F'})^t\in x^\a I^t\subseteq x^\a J\subseteq I^{(t+1)}=\bigcap_{H \in  \F(\Delta)} P_H^{t+1}.$$
	On the other hand, it can be seen that $\supp(x^\a(x_{F'})^t)\subseteq G\cup F'\subseteq F$. Thus, $x^\a(x_ux_{F'})^t \notin P_F^{t+1}$, which is a contradiction. The assertion is proved.
\end{proof}

The main result of this section is stated as follows. 

\begin{theorem}[Cohen-Macaulay symbolic quotients] \label{Main1}
	Let $\Delta$ be a simplicial complex on $[n]$ and let $I=I_{\Delta}$ be its Stanley--Reisner ideal.
	Then the following are equivalent:
	\begin{enumerate}
		\item[(1)] $I^{(t)}/I^{(t+1)}$ is Cohen-Macaulay for all $t\ge1$;
		\item[(2)] $I^{(t)}/I^{(t+1)}$ is Cohen-Macaulay for some $t\ge2$;
		\item[(3)] $\Delta$ is a matroid.
	\end{enumerate}
	Moreover, if $\dim S/I \ge2$, then the above are further equivalent to:
	\begin{enumerate}
		\item[(4)] $I^{(t)}/I^{(t+1)}$ satisfies Serre's condition $(S_2)$ for all $t\ge1$;
		\item[(5)] $I^{(t)}/I^{(t+1)}$ satisfies Serre's condition $(S_2)$ for some $t\ge2$.
	\end{enumerate}
	If $\dim S/I \le1$, then $I^{(t)}/I^{(t+1)}$ satisfies $(S_2)$ trivially for all $t\ge1$.
\end{theorem}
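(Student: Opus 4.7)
The proof proceeds by establishing the implications (3)$\Rightarrow$(1)$\Rightarrow$(4)$\Rightarrow$(5)$\Rightarrow$(3) and (1)$\Rightarrow$(2)$\Rightarrow$(3); the implications (1)$\Rightarrow$(2) and (4)$\Rightarrow$(5) are trivial, while (1)$\Rightarrow$(4) is the standard fact that Cohen--Macaulay modules satisfy Serre's $(S_2)$. The $(S_2)$-triviality when $\dim S/I\le 1$ is immediate since $(S_2)$ imposes no constraint at primes of depth less than $2$.

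For (3)$\Rightarrow$(1), I would invoke the Minh--Trung/Varbaro characterization \cite{MT2011, V2011}: $\Delta$ is a matroid if and only if $S/I^{(s)}$ is Cohen--Macaulay of dimension $d:=\dim S/I$ for every $s\ge 1$. Combined with \Cref{dim-Key2}, which yields $\dim(I^{(t)}/I^{(t+1)})=d$, the long exact sequence in local cohomology attached to
\[
0\to I^{(t)}/I^{(t+1)}\to S/I^{(t+1)}\to S/I^{(t)}\to 0
\]
shows that for every $i<d$ both flanking terms $H^{i-1}_{\mathfrak m}(S/I^{(t)})$ and $H^i_{\mathfrak m}(S/I^{(t+1)})$ vanish, hence $H^i_{\mathfrak m}(I^{(t)}/I^{(t+1)})=0$, forcing Cohen--Macaulayness of the claimed dimension.

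For (5)$\Rightarrow$(3), which also handles (2)$\Rightarrow$(3) when $d\ge 2$, I would argue contrapositively via rigidity (\Cref{thm:rigidity}, \Cref{cor:S2}). Assume $\Delta$ is not a matroid; the exchange axiom fails, yielding facets $F_1,F_2\in\F(\Delta)$ and a vertex $u\in F_1\setminus F_2$ with $(F_1\setminus\{u\})\cup\{v\}\notin\Delta$ for every $v\in F_2\setminus F_1$. For each $t\ge 2$ the plan is to construct a multidegree $\mathbf a\in\NN^n$ with $x^{\mathbf a}\in I^{(t)}\setminus I^{(t+1)}$ for which the subcomplex
\[
\Delta_{\mathbf a}(I^{(t+1)})=\bigcup_{F\in\F_{\mathbf a}}\langle F\rangle,\qquad \F_{\mathbf a}:=\Bigl\{F\in\F(\Delta):\sum_{i\notin F}a_i=t\Bigr\}
\]
is disconnected. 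Since $x^{\mathbf a}\in I^{(t)}$ forces $\Delta_{\mathbf a}(I^{(t)})$ to be the void complex, the relative pair degenerates to the ambient complex and
\[
\widetilde H^0\bigl((\Delta_{\mathbf a}(I^{(t+1)}),\Delta_{\mathbf a}(I^{(t)}));\kk\bigr)\cong\widetilde H^0(\Delta_{\mathbf a}(I^{(t+1)});\kk)\neq 0;
\]
then \Cref{thm:rigidity} forces $\depth(I^{(t)}/I^{(t+1)})\le 1<d$, contradicting both (2) and (5) under $d\ge 2$. The residual cases $d\le 1$ are trivial: $d=0$ forces $I=\mathfrak m$ and $\Delta=\{\varnothing\}$, a matroid; $d=1$ forces $\Delta$ to be pure of dimension $0$, automatically a matroid. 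Thus (1)--(3) are equivalent and vacuously true in those cases, uniformly resolving (2)$\Rightarrow$(3).

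The main obstacle is the explicit construction of the disconnecting $\mathbf a$. The idea is to concentrate its weight on $[n]\setminus F_1$ and on $F_2\setminus F_1$, calibrated so that $\F_{\mathbf a}$ contains both $F_1$ and $F_2$ while no facet $F\in\F_{\mathbf a}$ satisfies $F\cap F_1\neq\varnothing$ and $F\cap F_2\neq\varnothing$. The matroid exchange obstruction precisely rules out the immediate candidates for such a bridging facet, namely those of the form $(F_1\setminus\{u\})\cup\{v\}$ for $v\in F_2\setminus F_1$; the remaining potentially bridging facets must be excluded by sharpening the inequality $\sum_{i\notin F}a_i>t$ via a careful case analysis on facets straddling $F_1$ and $F_2$. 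Once the construction is in hand, the rigidity principle delivers the contradiction and closes the proof.
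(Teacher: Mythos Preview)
Your argument for $(3)\Rightarrow(1)$ via the short exact sequence and the Minh--Trung/Varbaro theorem matches the paper exactly, and your treatment of the low-dimensional cases $d\le 1$ is fine.

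The genuine gap is in $(2)\Rightarrow(3)$ and $(5)\Rightarrow(3)$: you correctly identify that one must produce a multidegree $\mathbf a$ for which the relative degree complex has nontrivial $\widetilde H^0$, but you do not construct it. You call this ``the main obstacle'' and offer only a sketch (``concentrate weight on $[n]\setminus F_1$'', ``exclude bridging facets by sharpening the inequality via a careful case analysis''). As written, the plan cannot succeed: you want $\mathbf a\in\NN^n$ with $x^{\mathbf a}\in I^{(t)}$, so that $\Delta_{\mathbf a}(I^{(t+1)})=\bigcup_{F\in\F_{\mathbf a}}\langle F\rangle$, and you want this union disconnected with $F_1,F_2$ in different components. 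But if $F_1\cap F_2\neq\varnothing$, which is the generic situation when basis exchange fails, the simplices $\langle F_1\rangle$ and $\langle F_2\rangle$ already share vertices, so no choice of nonnegative weights can separate them. Your stated disconnection criterion (no $F\in\F_{\mathbf a}$ meets both $F_1$ and $F_2$) is violated by $F_1$ itself whenever $F_1\cap F_2\neq\varnothing$.

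The paper resolves this by a substantially different construction. First, the matroid failure is recast as the existence of faces $F,G\in\Delta$ (not facets) with $F\setminus G=\{1\}$, $G\setminus F=\{2,3\}$, and $F\cup\{2\},F\cup\{3\}\notin\Delta$. Second, and crucially, the witness multidegree has \emph{negative} entries: one takes $U$ to be a facet of $\link_\Delta\{1\}\cap\link_\Delta\{2,3\}$ containing $F\cap G$, and sets $\mathbf a=t\e_1+\e_2+\e_3-\e_U$. The negative part forces a passage to the link at $U$, and the maximality of $U$ is exactly what guarantees that $\link_{\starop_\Delta\{1\}}U$ and $\link_{\starop_\Delta\{2,3\}}U$ meet only in $\{\varnothing\}$, giving disconnectedness. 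In this construction $\Delta_{\mathbf a}(I^{(t)})$ is not void but a cone over $\{1\}$ (hence acyclic), and the relative $\widetilde H^0$ is computed via the long exact sequence of the pair. The link-at-$U$ device is precisely the mechanism that disposes of the ``bridging'' problem you could not solve in $\NN^n$.
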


\begin{proof} It is clear that $(1)\Longrightarrow (2)$. Assume $(3)$ is true. By \cite{MT2011, V2011}, $S/I^{(t)}$ is Cohen-Macaulay for any $t\ge 1$. This, together with Lemma \ref{dim-Key2} and by considering the short exact sequence
	$$0\to  I^{(t)}/I^{(t+1)} \to S/I^{(t+1)}\to S/I^{(t)}\to 0$$
and its associated long exact sequence of local cohomology, implies that $I^{(t)}/I^{(t+1)}$ is a Cohen-Macaulay module for all $t\ge 1$. That is, $(3) \Longrightarrow (1)$. 
	
	We shall prove $(2)\Longrightarrow (3)$. Suppose that $I^{(t)}/I^{(t+1)}$ is Cohen-Macaulay for some $t\ge 2$ and $\Delta$ is not a matroid. Then, after a reindexing if necessary, there exist $F, G\in \Delta$ such that $F\setminus G=\{1\}$, $G\setminus F=\{2,3\}$ and $F\cup\{2\}\notin\Delta, F\cup\{3\}\notin\Delta$.
	
	Let $L = \link_{\Delta} \{1\} \cap \link_{\Delta} \{2,3\}$.
	Note that $F \cap G \in L$, so we can take 
	a facet $U$ of $L$ such that $F \cap G \subseteq U$.
	Let $\a = t\e_1+\e_2 + \e_3 -\e_{U}$.
	In particular, $\a^+ = t\e_1+\e_2 + \e_3$ and $G_{\a} = U$.
	It can be verified that
	$$\Delta_{\a^{+}}(I^{(t+1)}) = \starop_{\Delta} \{1\} \cup \starop_{\Delta}\{2,3\}$$
	and 
	$$\Delta_{\a^{+}}(I^{(t)}) \text{ is a cone over } \{1\}.$$
	Moreover, we get 
	\begin{align*}
		\Delta_{\a}(I^{(t+1)}) & = \link_{\Delta_{\a^+}(I^{(t+1)})} G_\a \\
		& =  \link_{\starop_{\Delta} \{1\}} U  \cup \link_{\starop_{\Delta} \{2,3\}}U. 
	\end{align*}

Observe first that, since $\{1\} \in \link_{\starop_{\Delta} \{1\}} U$ and 
	$\{2,3\} \in \link_{\starop_{\Delta} \{2,3\}} U$, both these links are not the empty nor the void complexes.
We shall further show that 
	$$\link_{\starop_{\Delta} \{1\}} U  \cap \link_{\starop_{\Delta} \{2,3\}}U 
	= \{ \varnothing \}.$$
	Indeed, suppose on the contrary that there exists a vertex $x \in [n]$ that
	belongs to the intersection $\link_{\starop_{\Delta} \{1\}} U  \cap \link_{\starop_{\Delta} \{2,3\}}U$.
	Then, 
	$x \not\in U$, $U \cup \{x\} \in \starop_{\Delta}\{1\}$ and
	$U \cup \{x\} \in \starop_{\Delta}\{2,3\}$.
	In other words,
	$x \not\in U$, $U \cup \{x,1\} \in \Delta$ and $U \cup \{x,2,3\} \in \Delta$.
	Note that $x$ must be different from $1,2,3$ --- for if $x=2$ then $U \cup \{1,2\} \in \Delta$, whence 
	$(F \cap G) \cup \{1,2\} = F\cup \{2\} \in \Delta$, which is a contradiction --- similar arguments for the case where $x =1$ or $x=3$.
	Thus, we get $U \cup \{x\} \in \link_{\Delta} \{1\}$ and
	$U \cup \{x\} \in \link_{\Delta} \{2,3\}$. Consequently, we have
	$U \cup \{x\} \in L$, which contradicts to the maximality of $U$ in $L$.
	Hence, $\link_{\starop_{\Delta} \{1\}} U \cap \link_{\starop_{\Delta} \{2,3\}}U 
	= \{ \varnothing \}$. 
	
We have just shown that $\Delta_{\a}(I^{(t+1)})$ is the disjoint union of non-empty simplicial complexes $\link_{\starop_{\Delta} \{1\}} U$ and $\link_{\starop_{\Delta} \{2,3\}}U$. Let $r= |U| + 1$.
	By Theorem \ref{HochsterTakayama}, we have 
	\begin{align*}
		H_{\mm}^{r}( I^{(t)}/I^{(t+1)})_{\a}&\cong \widetilde{H}^{r-|G_\a|-1}(\Delta_\a(I^{(t+1)}),\Delta_\a(I^{(t)}))\\
		&=\widetilde{H}^{r-|U|-1}(\link_{\Delta_{\a^+}(I^{(t+1)})}U, \link_{\Delta_{\a^+}(I^{(t)})}U)\\
		&=\widetilde{H}^{0}(\link_{\starop_{\Delta} \{1\}} U  \cup \link_{\starop_{\Delta} \{2,3\}}U, \text{ the cone over } \{1\})\\
		&\cong \widetilde{H}^{0}(\link_{\starop_{\Delta} \{1\}} U  \cup \link_{\starop_{\Delta} \{2,3\}}U) \ne (0).
	\end{align*}
	Here, the last isomorphism follows from the standard long exact sequence of cohomologies of a pair of simplicial complexes and their relative complex.

Observe finally that $|U| \leq \dim(\link_{\Delta}\{2,3\})+1 \leq \dim \Delta-1$. Therefore, 
	$$r = |U| + 1 \leq \dim \Delta < \dim (S/I)=\dim(I^{(t)}/I^{(t+1)}),$$ 
	where the last equality is due to Lemma \ref{dim-Key2}. This implies that $I^{(t)}/I^{(t+1)}$ is not Cohen-Macaulay, a contradiction. We have established the equivalence between (1), (2) and (3).

Suppose now that $\dim S/I \ge 2$. By Lemma \ref{dim-Key2}, $\dim(I^{(t)}/I^{(t+1)}) = \dim S/I \ge 2$ for all $t \ge 1$. Clearly, (1)$\Longrightarrow$(4)$\Longrightarrow$(5) since Cohen-Macaulay implies $(S_2)$.
	It remains to prove (5)$\Longrightarrow$(3).
	If $\Delta$ is not a matroid, then the same witness multidegree $\a$ constructed in the proof of (2)$\Longrightarrow$(3)
	produces $\widetilde H^0\bigl((\Delta_{\a}(I^{(t+1)}),\Delta_{\a}(I^{(t)}))\bigr) \neq 0$ for $t \ge 2$.
	Hence, $H^1_{\mm}(I^{(t)}/I^{(t+1)}) \neq 0$, so $\depth(I^{(t)}/I^{(t+1)}) \le 1$. 
	Since $d\ge 2$, this contradicts $(S_2)$ at $\mm$.
	Thus $\Delta$ must be a matroid.
\end{proof}

\begin{example}
	Consider $I=(x_1x_2,x_2x_3,x_3x_4,x_4x_5,x_5x_1)$ in $S=\kk[x_1,x_2,x_3,x_4,x_5]$. Then, $I$ is the Stanley-Reisner ideal of a 5-cycle, which is not a matroid. In this example, $I^{(t)}/I^{(t+1)}$ is Cohen-Macaulay if and only if $t=1$.
\end{example}

It is desirable to see if a similar characterization to that of Theorem \ref{Main1} holds for monomial ideals in general.

\begin{prob} Characterize the Cohen-Macaulayness of $I^{(t)}/I^{(t+1)}$, for all $t \ge 1$, when $I$ is a monomial ideal.
\end{prob}



For \emph{pure} simplicial complexes generalized Cohen-Macaulay symbolic quotients admit a similar characterization as that of Cohen-Macaulay symbolic quotients, as illustrated in the following remark.

\begin{remark}[Generalized Cohen-Macaulay symbolic quotients] \label{rmk:gCMandB}
	Fix $t \ge 2$ and set $M=I_\Delta^{(t)}/I_\Delta^{(t+1)}$.  Assume that $\dim S/I_\Delta \ge 2$. By Lemma \ref{dim-Key2}, $\dim M = \dim S/I_\Delta \ge 2$.
	
	Observe that, by definition, if $M$ is generalized Cohen-Macaulay then $M$ is Cohen-Macaulay on the punctured spectrum.
	Localizing at $x_F=\prod_{i\in F}x_i$ for a nonempty face $F \in \Delta$ and using the fact that symbolic powers
	commute with localization (since $I_\Delta$ is radical), we obtain
	\[
	M_{x_F}\ \cong\ \kk[x_i^{\pm1}\mid i\in F]\otimes_\kk
	\Big(I_{\link_\Delta(F)}^{(t)}/I_{\link_\Delta(F)}^{(t+1)}\Big).
	\]
	Thus, $M_{x_F}$ is Cohen-Macaulay for every $F \neq \varnothing$. 
	By the Cohen-Macaulay classification in \Cref{Main1}, this forces $\link_\Delta(F)$ to be a matroid for every nonempty face $F \in \Delta$. In particular, $\Delta$ is \emph{locally matroidal}.  
	
	On the other hand, it can be seen from the standard short exact sequence that $\Ass(M) \subseteq \Ass(S/I_\Delta^{(t)}) \cup \Ass(S/I_\Delta^{(t+1)}) = \Min(I_\Delta)$. Thus, if $\Delta$ is pure then $M$ is equidimensional. Particularly, if, in addition, $M$ is Cohen-Macaulay on the punctured spectrum, then $M$ is generalized Cohen-Macaulay. 
	
	Hence, we have demonstrated that, under the hypotheses that $\Delta$ is pure, $t \ge 2$, and $\dim S/I_\Delta \ge 2$,
	$$I_\Delta^{(t)}/I_\Delta^{(t+1)}\ \text{is generalized CM} \Longleftrightarrow \Delta \text{ is locally matroidal.}$$
	In other words, if $\Delta$ is pure and of dimension at least 1, then the following are equivalent:
	\begin{enumerate}
		\item $I_\Delta^{(t)}/I_\Delta^{(t+1)}$ is generalized Cohen-Macaulay for all $t \ge 1$; 
		\item $I_\Delta^{(t)}/I_\Delta^{(t+1)}$ is generalized Cohen-Macaulay for some $t \ge 2$; 
		\item $\Delta$ is locally matroidal.
	\end{enumerate}
\end{remark}


\section{Dimension Stability and Symbolic-Ordinary Discrepancy Modules} \label{sec.dim}

In this section, we look at the dimension of monomial ideal quotients and its relationship with degree complexes. We also investigate the function $\dim I^{(t)}/I^t$, where $I$ is the edge ideal of a graph.

For a monomial ideal $I$, let $\G(I)$ denote the unique set of minimal monomial generators of $I$. For monomial ideals $J \subseteq I$, for simplicity, we shall write $x^\a \in \G(I) \setminus J$ to denote the set of minimal monomial generators of $I$ that are not in $J$. The following lemmas allow us to understand the dimension of monomial ideal quotients from corresponding degree complexes. 

\begin{lemma} \label{Lemma1} Let $J\subseteq I$ be monomial ideals. Then, 
\begin{alignat*}{2}
	\dim(I/J) & = \max\limits_{x^\a \in I \setminus J}\{\dim (S/\sqrt{J: x^\a})\} && = \max\limits_{x^\a \in I \setminus J}\{\dim(\Delta_\a(J))+1\} \\
	&= \max\limits_{x^\a \in \G(I) \setminus J}\{\dim (S/\sqrt{J: x^\a})\} && = 
	\max\limits_{x^\a \in \G(I) \setminus J}\{\dim(\Delta_\a(J))+1\}.
\end{alignat*}
\end{lemma}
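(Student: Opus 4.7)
The plan is to reduce everything to two basic facts: $\dim(I/J)=\dim S/\Ann(I/J)=\dim S/(J:I)$, and the identification of $\sqrt{J:x^\a}$ with the Stanley--Reisner ideal of the degree complex $\Delta_\a(J)$ for $\a\in\NN^n$.

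First, I would verify that for every $\a\in\NN^n$, $\sqrt{J:x^\a}=I_{\Delta_\a(J)}$. Unraveling definitions, a squarefree monomial $x_F$ lies in $\sqrt{J:x^\a}$ if and only if $x^\a x_F^N\in J$ for some $N$, equivalently there is a minimal generator $x^\b$ of $J$ with $b_i\le a_i$ for all $i\notin F$. Negating, $x_F\notin\sqrt{J:x^\a}$ iff for every minimal generator $x^\b$ of $J$ there is some $i\notin F$ with $b_i>a_i$---which (since $G_\a=\varnothing$) is precisely the definition of $F\in\Delta_\a(J)$. Therefore $\dim S/\sqrt{J:x^\a}=\dim\Delta_\a(J)+1$, so the two ``$\sqrt{\cdot}$'' formulas in the lemma are equivalent to the corresponding ``$\Delta_\a(J)+1$'' formulas.

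Second, using $\dim M=\dim S/\Ann M$, I would write $\dim(I/J)=\dim S/(J:I)=\dim S/\sqrt{J:I}$. Since $I=(x^\a : x^\a\in\G(I))$ we have $(J:I)=\bigcap_{x^\a\in\G(I)}(J:x^\a)$, and hence $\sqrt{J:I}=\bigcap_{x^\a\in\G(I)}\sqrt{J:x^\a}$. Generators already lying in $J$ contribute $(J:x^\a)=S$ to the intersection and can be dropped, and because the dimension of $S$ modulo a finite intersection of ideals equals the maximum of the individual quotient dimensions, this yields $\dim(I/J)=\max_{x^\a\in\G(I)\setminus J}\dim S/\sqrt{J:x^\a}$.

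Finally, to pass from $\G(I)\setminus J$ to all of $I\setminus J$: any $x^\a\in I\setminus J$ factors as $x^\a=x^\b\cdot x^\c$ for some $x^\b\in\G(I)$, and $x^\a\notin J$ forces $x^\b\notin J$, so $x^\b\in\G(I)\setminus J$. The divisibility $x^\b\mid x^\a$ gives $(J:x^\b)\subseteq(J:x^\a)$, hence $\sqrt{J:x^\b}\subseteq\sqrt{J:x^\a}$, and therefore $\dim S/\sqrt{J:x^\a}\le\dim S/\sqrt{J:x^\b}$. Thus enlarging the index set to $I\setminus J$ does not change the maximum. All steps are routine commutative algebra; the only minor bookkeeping point is that $(J:x^\a)=S$ whenever $x^\a\in J$, which justifies restricting attention to $x^\a\notin J$ throughout.
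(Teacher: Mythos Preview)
Your proof is correct and follows essentially the same route as the paper's: compute $\sqrt{\Ann(I/J)}=\sqrt{J:I}$ as the intersection $\bigcap_{x^\a}\sqrt{J:x^\a}$ over generators (or all monomials) of $I$ not in $J$, identify each $\sqrt{J:x^\a}$ with the Stanley--Reisner ideal of $\Delta_\a(J)$, and read off the dimension as a maximum. Your write-up simply supplies more detail than the paper does---in particular the explicit verification that $\sqrt{J:x^\a}=I_{\Delta_\a(J)}$ and the divisibility argument showing the maximum over $I\setminus J$ agrees with that over $\G(I)\setminus J$.
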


\begin{proof} It can be seen that 
	$$\sqrt{\Ann(I/J)}=\sqrt{J:I}=\bigcap\limits_{x^\a \in I \setminus J} \sqrt{J:x^\a} = \bigcap\limits_{x^\a \in \G(I) \setminus J} \sqrt{J:x^\a}.$$
This implies that
$$\dim(I/J) = \max\limits_{x^\a \in I \setminus J}\{\dim (S/\sqrt{J: x^\a})\} = \max\limits_{x^\a \in \G(I) \setminus J}\{\dim (S/\sqrt{J: x^\a})\}.$$
It follows further that
	$S/\sqrt{\Ann(I/J)}$ is the Stanley-Reisner ring of 
	$$\bigcup_{x^\a\in I\setminus J} \Delta_\a(J)=\bigcup_{x^\a\in \G(I)\setminus J} \Delta_\a(J).$$
	Particularly, the assertion follows from the well-known relationship between the dimension of a Stanley-Reisner ring and that of its corresponding simplicial complex (see, for example, \cite{Stanley}).
\end{proof}

\begin{lemma} \label{Lemma2} Let $J \subseteq I$ be monomial ideals. If $I/J$ is Cohen-Macaulay of dimension $\alpha$, then $\Delta_\a(J)$ is Cohen-Macaulay of dimension $\alpha-1$ for any $x^\a\in I\setminus J$.
\end{lemma}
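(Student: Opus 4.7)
Fix $x^\a \in I \setminus J$, so $\a \in \NN^n$ and $G_\a = \varnothing$. My first observation is that since $x^\a \in I$, some minimal generator $x^\b$ of $I$ divides $x^\a$, i.e., $b_i \le a_i$ for every $i$; for any $F \subseteq [n]$ this forces the defining condition of $\Delta_\a(I)$ to fail at $x^\b$, so $\Delta_\a(I) = \emptyset$ is the void complex. Consequently $\link_{\Delta_\a(I)} F = \emptyset$ for every $F$, and every relative pair $(\link_{\Delta_\a(J)} F, \link_{\Delta_\a(I)} F)$ collapses to the single complex $\link_{\Delta_\a(J)} F$. Meanwhile $x^\a \notin J$ ensures $\varnothing \in \Delta_\a(J)$, so $\Delta_\a(J)$ is non-void.

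Next, I would establish purity using multiplication by $x^\a$: this defines an $S$-linear map $S \to I/J$ with kernel $(J : x^\a)$, and hence an injection $S/(J:x^\a) \hookrightarrow I/J$. Since $I/J$ is Cohen--Macaulay of dimension $\alpha$, it is unmixed of dimension $\alpha$, and $\Ass(S/(J:x^\a)) \subseteq \Ass(I/J)$ forces $\sqrt{J:x^\a}$ to be unmixed of pure dimension $\alpha$. Because $\Delta_\a(J)$ is the Stanley--Reisner complex of $\sqrt{J:x^\a}$ (as $\a \in \NN^n$), this shows $\Delta_\a(J)$ is pure of dimension $\alpha - 1$.

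For the Cohen--Macaulay property, I would invoke the Relative Reisner Criterion (\Cref{thm:relative-reisner}) at our chosen $\a$: for every face $F \in \Delta_\a(J)$,
$$
\widetilde H^j\bigl((\link_{\Delta_\a(J)} F, \link_{\Delta_\a(I)} F); \kk\bigr) = 0 \quad \text{for all } j < \alpha - |F| - 1.
$$
By the first paragraph these relative groups reduce to $\widetilde H^j(\link_{\Delta_\a(J)} F; \kk)$, and the bound $j < \alpha - |F| - 1$ is exactly the Reisner vanishing threshold for a pure complex of dimension $\alpha - 1$. Combined with the purity from the previous paragraph, the classical Reisner criterion yields the Cohen--Macaulayness of $\Delta_\a(J)$ in dimension $\alpha - 1$.

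The main obstacle I anticipate is the clean verification that $\Delta_\a(I)$ is void precisely because $x^\a \in I$, together with careful bookkeeping of the bounds so that the vanishing supplied by the Relative Reisner Criterion aligns with the purity established via unmixedness. The remainder is a direct application of the framework developed earlier in the paper.
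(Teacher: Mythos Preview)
Your proof is correct. Both your argument and the paper's rest on the same core observation---that $\Delta_\a(I)$ is the void complex when $x^\a\in I$, so the relative cohomology groups collapse to absolute ones---and both feed the cohomology vanishing supplied by Cohen--Macaulayness of $I/J$ into Reisner's criterion. The genuine difference is in how the dimension $\dim\Delta_\a(J)=\alpha-1$ is pinned down. The paper first bounds $\dim\Delta_\a(J)\le\alpha-1$ via \Cref{Lemma1} and then rules out strict inequality by contradiction: if the dimension were smaller, choosing $F$ to be a facet of $\Delta_\a(J)$ gives $\link_{\Delta_\a(J)}F=\{\varnothing\}$ with $\widetilde H^{-1}\ne 0$, while $|F|<\alpha$ places $j=-1$ in the vanishing range forced by Cohen--Macaulayness. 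You instead establish purity of dimension $\alpha-1$ directly, via the injection $S/(J:x^\a)\hookrightarrow I/J$ and unmixedness of Cohen--Macaulay modules, which then makes the Reisner threshold match on the nose. Your route is a bit more conceptual and yields purity for free; the paper's route stays entirely within the degree-complex machinery and avoids the associated-primes detour, at the cost of a short case analysis.
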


\begin{proof} Consider any $\a\in\NN^n$ such that $x^\a\in I\setminus J$. By Lemma \ref{Lemma1}, $\dim(\Delta_\a(J))\le \alpha-1$. 
	
If $\dim(\Delta_\a(J))=\alpha-1$ then, by Reisner's criterion (see, for example, \cite{Stanley}) for Cohen-Macaulay Stanley-Reisner rings, Lemma \ref{Key0}, and Theorem \ref{HochsterTakayama}, it can be seen that $\Delta_\a(J)$ is Cohen-Macaulay. 

Suppose now that $\dim(\Delta_\a(J))<\alpha-1$. 
For a fixed $F\in \Delta_\a(J)$, consider $\b\in\ZZ^n$ with $\b_i=\a_i$ if $i\notin F$ and $\b_i=-1$ otherwise. Then, $\link_{\Delta_\a(J)}F=\Delta_\b(J)$. 
	By Theorem \ref{HochsterTakayama}, for any $j\le \dim(\link_{\Delta_\a(J)}F)< \alpha-|F|-1$,
\begin{align} 
	\widetilde{H}^{j}(\link_{\Delta_\a(J)}F)=\widetilde{H}^{j}(\Delta_\b(J))=(0). \label{eq.10}
\end{align}

	Since $x^\a\notin J$, we have $\Delta_\a(J)\ne\varnothing$. Choose $F$ to be a facet of $\Delta_\a(J)$. Then, $\link_{\Delta_\a(J)}F=\{\varnothing\}$. It follows that $\widetilde{H}^{-1}(\link_{\Delta_\a(J)}F)\ne (0)$. This is a contradiction to (\ref{eq.10}), as $|F| \le \dim(\Delta_\a(J))+1 < \alpha$ and so $-1< \alpha-|F|-1$. The result is proved.
\end{proof}

For the remaining of this section, we consider monomial ideal quotients of the form $I^{(t)}/I^t$, for a monomial ideal $I \subseteq S$, and address the following question.

\begin{quest} \label{quest.dim}
	Which numerical function $f: \NN \rightarrow \ZZ$ can be the dimension function 
	$$t \mapsto \dim I^{(t)}/I^t?$$ 
\end{quest}

\begin{remark}
	\label{rmk.dim}
	It follows from \cite[Theorem 3.3 and Proposition 5.1]{HPV2008} that $\dim (I^{(t)}/I^t)$ is asymptotically a constant function. The question of interest is which asymptotically constant functions are such dimension functions of monomial ideals.
\end{remark}

We shall identify a class of monomial ideals, including edge ideals of graphs, for which the function $t \mapsto \dim I^{(t)}/I^t$ is an increasing function.

\begin{definition}[Herzog--Qureshi]
An ideal $I$ in a Noetherian ring is said to satisfy the \emph{Ratliff condition} if $I^{t+1} :I = I^t$ for all $t \ge 1$.
\end{definition}

It was shown in \cite{HQ2014} that if $I$ satisfies the Ratliff condition then $I$ has the (strong) persistence property on associate primes of its powers. We shall prove that for ideals satisfying Ratliff condition, the dimension function of interested in \ref{quest.dim} must be non-decreasing.

\begin{theorem}[Dimension function of symbolic discrepancies] \label{thm.dimIncrease} 
	Let $I$ be a monomial ideal that satisfies the Ratliff condition. For any $t \in \NN$, we have
	$$\dim(I^{(t)}/I^t)\le\dim(I^{(t+1)}/I^{t+1}).$$ 
\end{theorem}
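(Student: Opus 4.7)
The plan is to prove the inequality at the level of annihilators, bypassing individual degree complexes entirely. Specifically, I will establish the ideal containment
\[
I^{t+1}:I^{(t+1)}\ \subseteq\ I^{t}:I^{(t)}.
\]
Combined with the identity $\dim(I^{(s)}/I^s)=\dim(S/\sqrt{I^s:I^{(s)}})$, which follows from the proof of Lemma~\ref{Lemma1} since $\Ann(I^{(s)}/I^s)=I^s:I^{(s)}$, this will give the desired monotonicity by passing to radicals.

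The first step is the preliminary containment $I\cdot I^{(t)}\subseteq I^{(t+1)}$, which is purely formal from the definition of symbolic powers used in the paper, $I^{(t)}=\bigcap_{\pp\in\Ass(I)}(I^tS_\pp\cap S)$. For each $\pp\in\Ass(I)$, localizing gives $I^{(t)}S_\pp\subseteq I^tS_\pp$, hence $I\cdot I^{(t)}\cdot S_\pp\subseteq I^{t+1}S_\pp$; since $I\cdot I^{(t)}\subseteq S$, intersecting over all $\pp\in\Ass(I)$ yields the claim.

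The second step activates the Ratliff hypothesis. Take any $a\in I^{t+1}:I^{(t+1)}$, so $a\cdot I^{(t+1)}\subseteq I^{t+1}$. Combined with Step~1 this gives $a\cdot I\cdot I^{(t)}\subseteq I^{t+1}$. For any $f\in I^{(t)}$, I therefore have $(af)\cdot I\subseteq I^{t+1}$, so $af\in I^{t+1}:I=I^{t}$ by the Ratliff condition. Varying $f$ gives $a\cdot I^{(t)}\subseteq I^{t}$, i.e., $a\in I^{t}:I^{(t)}$, as required.

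Taking radicals and passing to $\dim S/(-)$ then yields the theorem. No genuine obstacle is anticipated; the only point requiring care is to use the symbolic-power definition via \emph{all} associated primes of $I$ in Step~1. A more combinatorial route through Lemma~\ref{Lemma1} --- picking a witness $\x^{\a}\in\G(I^{(t)})\setminus I^{t}$ achieving the max, and then a lift $\x^{\b}=\x^{\a}\cdot m$ with $m\in\G(I)$ chosen so that $\x^{\b}\notin I^{t+1}$ (which exists by Ratliff) --- only produces the containment $I^{t}:\x^{\a}\subseteq I^{t+1}:\x^{\b}$, which goes in the wrong direction for comparing dimensions. The annihilator formulation reverses this orientation precisely because the quantifier over $m\in I$ is absorbed into the Ratliff identity $I^{t+1}:I=I^{t}$, which is why I prefer the global algebraic approach above.
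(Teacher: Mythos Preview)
Your proof is correct and takes a genuinely different route from the paper. The paper argues pointwise: it picks a witness $x^{\a}\in\G(I^{(t)})\setminus I^t$ achieving the maximum in Lemma~\ref{Lemma1}, then uses the Ratliff condition to write $I^t:x^{\a}=\bigcap_{m\in\G(I)}I^{t+1}:(x^{\a}m)$; by the dimension formula for intersections of radicals there is some $m$ with $\dim(S/\sqrt{I^{t+1}:(x^{\a}m)})=\dim(S/\sqrt{I^t:x^{\a}})$, and $x^{\a}m\in I^{(t+1)}\setminus I^{t+1}$ gives the bound. Your global containment $I^{t+1}:I^{(t+1)}\subseteq I^t:I^{(t)}$ absorbs the intersection over $m$ into the single identity $I^{t+1}:I=I^t$, which is cleaner and avoids any appeal to degree complexes or the structure of minimal generators. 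Both arguments rest on exactly the same two inputs, $I\cdot I^{(t)}\subseteq I^{(t+1)}$ and the Ratliff condition, so neither is more general. The paper's version has the mild advantage of producing an explicit witness $x^{\a}m\in I^{(t+1)}\setminus I^{t+1}$ matching the dimension, which is used again in the proof of Corollary~\ref{cor.CM}. Your closing remark slightly undersells the combinatorial route: the single-lift containment $I^t:x^{\a}\subseteq I^{t+1}:x^{\b}$ does go the wrong way, but the intersection decomposition rescues it by giving equality of dimensions for a suitably chosen $m$, which is precisely how the paper proceeds.
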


\begin{proof} By Lemma \ref{Lemma1}, there exists $\a\in\NN^n$ such that 
	$x^\a\in \G(I^{(t)})\setminus I^t$ and $$\dim(I^{(t)}/I^t)=\dim(\Delta_\a(I^t))+1.$$ 
	Observe that, since $I^{t+1}:I=I^{t}$, we have
	$$I^t:x^\a=\bigcap_{e\in E(G)} I^{t+1}: (x^\a\cdot x_e).$$
	Thus, by the Dimension Theorem, there exists an edge $f\in G$ such that
	$$\dim(S/\sqrt{I^t:x^\a})=\dim\left(S\Big/\sqrt{I^{t+1}:(x^\a\cdot x_f)}\right).$$

	It is easy to see that $x^\a\cdot x_f \in I^{(t+1)}$ since $x^\a \in I^{(t)}$. Furthermore, since $x^\a \not\in I^t$, $\dim(S/\sqrt{I^{t+1} : (x^\a \cdot x_f)}) = \dim (S/\sqrt{I^t: x^\a}) \ge 0$, and so $x^\a \cdot x_f \not\in I^{t+1}$. Therefore, by Lemma \ref{Lemma1}, we have
	\begin{align*} 
		\dim(I^{(t)}/I^t) & = \dim(S/\sqrt{I^{t+1}: (x^\a \cdot x_f)}) \\
		& \le \max\{\dim(S/\sqrt{I^{t+1}: x^\b}) ~\big|~ x^\b \in I^{(t+1)} \setminus I^{t+1} \} \\
		& = \dim (I^{(t+1)}/I^{t+1}).
		\end{align*}
	The assertion is proved. 
\end{proof}

The proof of Theorem \ref{thm.dimIncrease} gives the following interesting corollary.

\begin{corollary}
	\label{cor.CM}
	Let $I$ be a monomial ideal satisfying the Ratliff condition. Suppose that $I^{(t)}/I^t$ is a nonzero Cohen-Macaulay module for some $t \in \NN$. Then, for any $s \le t$, either $I^{(s)}/I^s = 0$ or $\dim (I^{(s)}/I^s) = \dim(I^{(t)}/I^t).$
\end{corollary}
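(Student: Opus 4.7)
\medskip
\noindent\textbf{Proof proposal.} The inequality $\dim(I^{(s)}/I^s) \le \dim(I^{(t)}/I^t)$ is a direct consequence of \Cref{thm.dimIncrease} iterated $t-s$ times. The task is therefore to show that, under the hypothesis $I^{(s)}/I^s \neq 0$, the opposite inequality also holds. The plan is to construct an explicit witness monomial at level $t$ whose associated dimension is $\dim(I^{(s)}/I^s)$, and then to exploit the unmixedness forced by Cohen-Macaulayness.

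First I would invoke \Cref{Lemma1} to choose a monomial $x^{\a_0} \in I^{(s)} \setminus I^s$ achieving the maximum dimension, i.e.
$$\dim\!\bigl(S/\sqrt{I^s:x^{\a_0}}\bigr) \;=\; \dim(I^{(s)}/I^s) \;=:\; d_s \;\ge\; 0.$$
Next I would lift this witness one symbolic level at a time. Assume that at step $i$ we have produced $x^{\a_i} \in I^{(s+i)} \setminus I^{s+i}$ with $\dim(S/\sqrt{I^{s+i}:x^{\a_i}}) = d_s$. The Ratliff condition $I^{s+i+1}:I = I^{s+i}$ yields, exactly as in the proof of \Cref{thm.dimIncrease}, the identity
$$\sqrt{I^{s+i}:x^{\a_i}} \;=\; \bigcap_{g \in \G(I)} \sqrt{I^{s+i+1}:(x^{\a_i}g)}.$$
Taking $g_{i+1} \in \G(I)$ that attains the maximum dimension among these radical ideals and setting $x^{\a_{i+1}} := x^{\a_i}g_{i+1}$ preserves the dimension. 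The membership $x^{\a_{i+1}} \in I^{(s+i+1)}$ follows from the filtration property $I^{(s+i)} \cdot I \subseteq I^{(s+i+1)}$, while $x^{\a_{i+1}} \notin I^{s+i+1}$ follows from $d_s \ge 0$ (otherwise the radical would be the unit ideal, forcing dimension $-1$). After $t-s$ iterations I obtain $x^{\alpha} \in I^{(t)} \setminus I^t$ with $\dim(S/\sqrt{I^t:x^{\alpha}}) = d_s$.

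Finally I would combine this witness with Cohen-Macaulayness. For any minimal prime $\pp$ over the annihilator $(I^t:x^{\alpha})$ of the element $x^{\alpha} + I^t \in S/I^t$, the prime $\pp$ is an associated prime of the cyclic submodule $S \cdot (x^{\alpha} + I^t)$. Because $x^{\alpha} \in I^{(t)}$, this cyclic submodule lies inside $I^{(t)}/I^t$, and hence $\pp \in \Ass(I^{(t)}/I^t)$. Since $I^{(t)}/I^t$ is Cohen-Macaulay, it is unmixed, so every $\pp \in \Ass(I^{(t)}/I^t)$ satisfies $\dim(S/\pp) = \dim(I^{(t)}/I^t)$. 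Taking the maximum over minimal primes of $(I^t:x^{\alpha})$ then gives $d_s = \dim(I^{(t)}/I^t)$, as desired.

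The main bookkeeping subtlety, which the write-up will need to justify carefully, is that each intermediate lift $x^{\a_{i+1}} = x^{\a_i} g_{i+1}$ genuinely lies in $I^{(s+i+1)} \setminus I^{s+i+1}$. The containment uses the standard symbolic-power inclusion $I^{(a)} \cdot I^{(b)} \subseteq I^{(a+b)}$, while the non-containment is \emph{forced} by the inequality $d_s \ge 0$, which itself is exactly the hypothesis $I^{(s)}/I^s \neq 0$. Once this inductive lift is in hand, the passage from \emph{minimal prime over a colon} to \emph{associated prime of the module} (via the cyclic submodule $S \cdot (x^{\alpha} + I^t)$) is the short technical bridge that connects the dimension-preserving construction to the unmixedness of Cohen-Macaulay modules.
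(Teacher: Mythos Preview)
Your proof is correct. It shares the same skeleton as the paper's argument---use the Ratliff condition to push a witness monomial from level $s$ up to level $t$ while preserving $\dim(S/\sqrt{I^t:x^\alpha})$, then exploit Cohen--Macaulayness at level $t$---but the two implementations differ in two places. First, the paper jumps from $s$ to $t$ in one shot via the iterated Ratliff identity $I^t:I^{t-s}=I^s$, writing $I^s:x^\a=\bigcap_{x^\b\in\G(I^{t-s})} I^t:(x^\a x^\b)$, whereas you iterate one degree at a time; this is cosmetic. Second, and more substantively, the paper invokes \Cref{Lemma2} (which is proved using the relative Hochster--Takayama formula) to conclude that \emph{every} $x^\c\in I^{(t)}\setminus I^t$ satisfies $\dim(S/\sqrt{I^t:x^\c})=\dim(I^{(t)}/I^t)$, while you bypass the degree-complex machinery entirely and argue directly that a minimal prime over $(I^t:x^\alpha)$ is an associated prime of $I^{(t)}/I^t$, hence has the correct height by unmixedness. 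Your route is more elementary and self-contained; the paper's route keeps the argument inside the Hochster--Takayama framework developed earlier and in fact proves the stronger uniform statement that $\dim(S/\sqrt{I^s:x^\a})=\dim(I^{(t)}/I^t)$ for \emph{every} $x^\a\in I^{(s)}\setminus I^s$, not just one attaining the maximum.
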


\begin{proof}
	Suppose that $I^{(s)} : I^s \not= \varnothing$. Observe that, by the Ratliff condition, $I^t : I^{t-s} = I^s$. Thus, for any $x^\a \in I^{(s)} \setminus I^s$, we have
	$$I^s : x^\a = \bigcap_{x^\b \in \G(I^{t-s})} I^t : (x^\a \cdot x^\b).$$
	On the other hand, by Lemmas \ref{Lemma1} and \ref{Lemma2}, for any $x^\b \in \G(I^{t-s})$ such that $x^\a \cdot x^\b \not\in I^t$ (clearly, $x^\a \cdot x^\b \in I^{(t)}$), we have
	$$\dim(S/\sqrt{I^t: (x^\a \cdot x^\b)}) = \dim(I^{(t)}/I^t).$$
	Therefore, 
	$$\dim (S/\sqrt{I^s : x^\a}) = \dim(I^{(t)}/I^t).$$
	This is true for any $x^\a \in I^{(s)} \setminus I^s$. Hence, by Lemma \ref{Lemma1}, $I^{(s)}/I^s$ has the same dimension as that of $I^{(t)}/I^t$.
\end{proof}

We shall now focus on the class of edge ideals of graphs and investigate the dimension function of its symbolic-ordinary discrepancy modules.


\begin{corollary} \label{dim-graph}
	Let $G$ be a graph and let $I = I(G)$ be its edge ideal. Then, the function $t \mapsto \dim I^{(t)}/I^t$ is an increasing function.
\end{corollary}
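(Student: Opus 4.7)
The plan is to deduce the corollary directly from \Cref{thm.dimIncrease}, so the entire work reduces to verifying the single combinatorial fact that every edge ideal $I = I(G)$ satisfies the Ratliff condition $I^{t+1} : I = I^t$ for all $t \ge 1$. Once this is in hand, \Cref{thm.dimIncrease} applies verbatim and yields $\dim(I(G)^{(t)}/I(G)^t) \le \dim(I(G)^{(t+1)}/I(G)^{t+1})$ for every $t$, which is exactly the asserted monotonicity.

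To verify the Ratliff condition for $I(G)$, the inclusion $I^t \subseteq I^{t+1}:I$ is immediate. For the reverse inclusion, I would argue monomial-by-monomial: assume a monomial $x^\a$ satisfies $x^\a \cdot x_e \in I^{t+1}$ for every edge $e \in E(G)$, and conclude that $x^\a \in I^t$. This is precisely the strong persistence property in the sense of Herzog--Qureshi \cite{HQ2014}, and it is known to hold for edge ideals of graphs. A proof can be organized by analyzing the multisets of $t+1$ edges whose products divide $x^\a \cdot x_e$ as $e$ ranges over $E(G)$, and showing that these expressions can be reorganized coherently so that the extra edge $e$ can be cancelled, leaving a product of $t$ edges dividing $x^\a$. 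In the bipartite case the corollary is vacuous, since by K\"onig's theorem $I(G)^{(t)} = I(G)^t$ for all $t$ and so the dimension function is identically $-1$ (our convention for $\dim(0)$), which is trivially non-decreasing; the substantive case is when $G$ is non-bipartite, where the presence of odd cycles prevents $I(G)$ from being normally torsion-free.

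The main obstacle, then, is not in the reduction but in the verification of Ratliff for edge ideals containing odd cycles. The core difficulty is purely combinatorial: given that $x^\a \cdot x_e$ has a representation as a product of $t+1$ edges for each $e$, one must coherently extract a single matching-like structure of size $t$ from $x^\a$ independent of $e$. This is precisely the step where odd cycles complicate the picture, since products of edges around an odd cycle can produce monomials that do not factor naturally through the edge set of $G$. Modulo this known verification, the corollary follows in one line from \Cref{thm.dimIncrease}.
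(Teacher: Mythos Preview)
Your proposal is correct and follows essentially the same approach as the paper: reduce to \Cref{thm.dimIncrease} by verifying the Ratliff condition for edge ideals. The paper simply cites \cite[Lemma~2.12]{MV2012} for this fact rather than sketching the monomial-by-monomial argument, but the logical structure is identical.
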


\begin{proof}
	It follows from \cite[Lemma 2.12]{MV2012} that $I(G)$ satisfies the Ratliff condition. The assertion is now a direct consequence of Theorem \ref{thm.dimIncrease}.
\end{proof}

For a graph $G$, let $t(G)$ denote the \emph{stabilization index} of $\dim (I(G)^{(t)}/I(G)^t)$; that is, $t(G)$ is the smallest integer $t_0$ such that $\dim (I(G)^{(t)}/I(G)^t) = \dim(I(G)^{(t_0)}/I(G)^{t_0})$ for all $t \ge t_0$. 

\begin{quest}\label{quest.stability}
	Let $c(G) \in \NN$ be such that $2c(G)+1$ is the maximum length of an induced odd cycle in $G$ ($c(G) = 0$ if $G$ is bipartite). Is it true that 
	$$t(G) \le c(G)+1?$$
\end{quest}

We shall give two instances for which $t(G)$ is explicitly determined and Question \ref{quest.stability} has affirmative answers. Recall that a graph is called \emph{unicyclic} if it has a unique cycle. Note also that if $G$ is a \emph{bipartite} graph, i.e., $G$ does not contain any odd cycles, then $I^{(t)} = I^t$ for all $t \in \NN$ (see, for example, \cite{HM2010, SVV1994}). Thus, we shall consider unicyclic graphs which contain odd cycles. Recall also that $\alpha(G)$ denotes the maximum size of an \emph{independent} set in $G$, i.e., a set of vertices which are pairwise nonadjacent. It is easy to see that $\dim(S/I(G)) = \alpha(G)$ for any graph $G$.

\begin{theorem}[Dimension function for symbolic-ordinary discrepancy of unicyclic graphs] \label{thm.unicyclic} Let $G$ be a unicyclic graph containing an odd cycle. Let $C_{2s+1}$ denote the unique odd cycle (of length $2s+1$, for $s \ge 1$) in $G$ and let $I = I(G)$. For any $t \ge s+1$, we have 
	$$\dim(I^{(t)}/I^{t})=\dim(I^{(s+1)}/I^{s+1})=\alpha(G[V\setminus N[C_{2s+1}]]).$$
\end{theorem}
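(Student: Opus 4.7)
Set $\alpha_0:=\alpha(G[V\setminus N[C_{2s+1}]])$ and recall from Corollary~\ref{dim-graph} that $t\mapsto\dim(I^{(t)}/I^t)$ is non-decreasing. The theorem therefore reduces to the two bounds $\dim(I^{(s+1)}/I^{s+1})\ge\alpha_0$ and $\dim(I^{(t)}/I^t)\le\alpha_0$ for every $t\ge s+1$. My plan is to establish the first by producing an explicit witness, and the second by a face-size bound on the relevant degree complexes, invoking Lemma~\ref{Lemma1} in each step.

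For the lower bound, I would take $\a=\e_{V(C_{2s+1})}$, so that $x^\a=\prod_{v\in V(C_{2s+1})}x_v$. Using that every minimum vertex cover of $G$ restricts to a vertex cover of $C_{2s+1}$ of size at least $s+1$, I can check that $x^\a\in\G(I^{(s+1)})\setminus I^{s+1}$ (the containment by symbolic weights, minimality by a single-variable removal argument using an alternating cover, and the non-membership in $I^{s+1}$ by degree). Fixing a maximum independent set $W$ of $G[V\setminus N[C_{2s+1}]]$, I would then verify $W\in\Delta_\a(I^{s+1})$ as follows: if a product of $s+1$ edges $x^\b$ satisfied $b_i\le a_i$ for every $i\notin W$, then no edge of $\b$ could be incident to $W$ (since $W\cap N[C_{2s+1}]=\varnothing$ together with independence of $W$ would force such an edge to leave $W$ to a vertex $i\notin W$ with $a_i=0$), so all $s+1$ edges must lie inside $V(C_{2s+1})$, and then the pigeonhole inequality $2(s+1)>|V(C_{2s+1})|=2s+1$ forces some $v\in V(C_{2s+1})$ with $b_v\ge 2>1=a_v$, a contradiction. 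Lemma~\ref{Lemma1} then yields $\dim(I^{(s+1)}/I^{s+1})\ge|W|=\alpha_0$.

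For the upper bound, fix $t\ge s+1$, $x^\a\in\G(I^{(t)})\setminus I^t$, and let $F$ be a facet of $\Delta_\a(I^t)$. First, $F$ must be independent in $G$, since otherwise an edge lying entirely in $F$ raised to the $t$-th power would witness $F\notin\Delta_\a(I^t)$. It therefore suffices to show $F\subseteq V\setminus N[C_{2s+1}]$. Assuming for contradiction $v\in F\cap N[C_{2s+1}]$, I would construct a product $x^\b$ of $t$ edges with $b_i\le a_i$ on $V\setminus F$ in order to contradict $F\in\Delta_\a(I^t)$. When $v\in V(C_{2s+1})$, the path $C_{2s+1}-v$ admits a perfect matching of size $s$, and combining it with $t-s$ extra edges supplied by the symbolic inequality $\sum_{i\in C'}a_i\ge t$ applied to a minimum vertex cover $C'$ chosen to contain $V(C_{2s+1})\setminus\{v\}$ yields the required $\b$. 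When $v\in N(C_{2s+1})\setminus V(C_{2s+1})$, I would handle the situation analogously using the bridge edge $\{u,v\}$ for a cycle-neighbor $u\in V(C_{2s+1})\cap N(v)$ together with a near-perfect matching of $C_{2s+1}$ missing $u$, again padded via the symbolic inequality. Either way, $|F|\le\alpha_0$, and Lemma~\ref{Lemma1} gives $\dim(I^{(t)}/I^t)\le\alpha_0$.

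The hardest step is the packing construction in the upper bound: the symbolic condition provides only a total-weight inequality $\sum_{i\in C'}a_i\ge t$ on vertex covers, whereas the contradiction I need is pointwise domination $b_i\le a_i$ on $V\setminus F$. The unicyclic hypothesis is essential here, for it forces the obstruction to membership in $I^t$ to come from a unique odd cycle and allows a clean matching-plus-bridge construction on that cycle, with the attached trees absorbing the excess symbolic weight; the threshold $t\ge s+1$ is exactly what gives $C_{2s+1}$ enough combinatorial slack to complete the packing. Stabilization of $\dim(I^{(t)}/I^t)$ for all $t\ge s+1$ then follows by combining the monotonicity of Corollary~\ref{dim-graph} with these matching bounds.
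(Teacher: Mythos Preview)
Your lower bound at $t=s+1$ is clean and correct, and the reduction via monotonicity (Corollary~\ref{dim-graph}) and Lemma~\ref{Lemma1} is sound. The paper, by contrast, never splits into lower/upper bounds: it invokes the structure theorem of \cite{GHOS}, namely $I^{(t)}=\sum_{i=0}^{k}I^{t-i(s+1)}(x_C)^i$, together with the Ratliff identity $I^{t}:I^{t-i(s+1)}=I^{i(s+1)}$, to reduce everything to the single computation $\sqrt{I^{i(s+1)}:(x_C)^i}=I(G[V\setminus N[C_{2s+1}]])+(N[C_{2s+1}]])$, from which the value $\alpha_0$ drops out immediately. So your route is more elementary in spirit (no external structure theorem), but it forces you to control $\Delta_\a(I^t)$ for \emph{every} $x^\a\in\G(I^{(t)})\setminus I^t$, with no information about the shape of $\a$.

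That is exactly where your upper bound breaks. Take $G=C_5$ (so $s=2$), $t=3$, $\a=\e_{V(C)}=(1,1,1,1,1)$, and $v=1\in F$. Your recipe is: take the perfect matching $\{2,3\},\{4,5\}$ of $C-v$ (size $s=2$), then append $t-s=1$ extra edge ``supplied by the symbolic inequality'' on a minimal vertex cover $C'$ containing $V(C)\setminus\{v\}$. First, no such minimal $C'$ exists: minimal vertex covers of $C_5$ have size $3$, while $V(C)\setminus\{v\}$ has size $4$. Second, even ignoring this, \emph{no} edge can be appended to $\{2,3\},\{4,5\}$ without pushing some $b_i$ with $i\neq v$ above $a_i=1$; every edge of $C_5$ touches $\{2,3,4,5\}$, each of which is already saturated. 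A witnessing product does exist---$\{1,2\},\{1,5\},\{3,4\}$ gives $\b=(2,1,1,1,1)$---but it is \emph{not} of the form ``matching of $C-v$ plus extras''; both edges through $v$ must be used. The analogous case $v\in N(C)\setminus V(C)$, which you defer with ``I would handle the situation analogously'', has the same defect.

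More broadly, the symbolic inequality $\sum_{i\in C'}a_i\ge t$ is an aggregate weight bound, whereas the contradiction you need is the pointwise bound $b_i\le a_i$ on $V\setminus F$. Converting the former into the latter is exactly the hard step, and your sketch does not do it. This is precisely why the paper leans on \cite{GHOS}: knowing that every $x^\a\in I^{(t)}\setminus I^t$ factors as $(x_C)^i\cdot m$ with $m\in I^{t-i(s+1)}$ gives the structure of $\a$ needed to compute $\sqrt{I^t:x^\a}$ directly. If you want to avoid \cite{GHOS}, you will need either an independent proof of that factorization or a genuinely different packing argument that does not start from the matching of $C-v$.
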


\begin{proof} Using \cite[Theorem 3.4]{GHOS}, we have $I^{(s+1)}=I^{s+1}+(x^\a)$, where $x^\a=\prod\limits_{i\in C_{2s+1}}x_i$. By Lemma \ref{Lemma1},
	$$\dim(I^{(s+1)}/I^{s+1})=\dim(\Delta_\a(I^{s+1}))+1.$$
	
Consider any $t\ge s+1$ and write $t=k(s+1)+r$, for $0\le r\le s$. \cite[Theorem 3.4]{GHOS} again gives us that
	$$I^{(t)}=\sum_{i=0}^{k}I^{t-i(s+1)}(x^\a)^i.$$
	By Lemma \ref{Lemma1}, there exists $x^\b \in I^{t-i(s+1)}$, for some $1 \le i \le k$, such that
	$$\dim(I^{(t)}/I^{t})=\dim\left(S/\sqrt{I^t:x^\b(x^\a)^i}\right) = \max\limits_{x^\c \in \G(I^{t-i(s+1)})} \{\dim (S/\sqrt{I^t: x^\c(x^\a)^i})\}.$$
	
We shall show that it can be reduced to the case where $r=0$ and $i=k$. Indeed, by \cite[Lemma 2.12]{MV2012}, $I^{t}:I^{t-i(s+1)}=I^{i(s+1)}$, and so
	$I^{i(s+1)}:(x^\a)^i=\bigcap_{x^\c\in \G(I^{t-i(s+1)})} I^{t}:[(x^\a)^i\cdot x^\c].$
	Particularly, 
	$$\dim\left(S\Big/\sqrt{I^{i(s+1)}:(x^\a)^i}\right) = \max\limits_{x^\c \in \G(I^{t-i(s+1)})} \{\dim (S/\sqrt{I^t: (x^\a)^i \cdot x^\c})\} = \dim(S/\sqrt{I^t:x^\b(x^\a)^i}).$$ 
That is, we can consider $I^{(i(s+1))}/I^{i(s+1)}$ in place of $I^{(t)}/I^t$. Observe further that, for any $i \in \NN$,
	$$\sqrt{I^{i(s+1)}:(x^\a)^i}=I(G[V\setminus N[C_{2s+1}]])+(N[C_{2s+1}])$$
Hence, $\dim(S/\sqrt{I^{i(s+1)}:(x^\a)^i})=\dim(S/\sqrt{I^{s+1}:x^\a})$, and the desired statement is established.
\end{proof}

Recall that a \emph{perfect} graph is such that both the graph and its complement have no odd induced cycles of length $\ge 5$.

\begin{theorem}[Dimension function for symbolic-ordinary discrepancy of perfect graphs] \label{thm.perfect}
	Let $G$ be a perfect graph and let $I = I(G)$. Then, for any $t \ge 2$, we have
$$\dim(I^{(t)}/I^{t}) = \dim(I^{(2)}/I^{2}).$$
\end{theorem}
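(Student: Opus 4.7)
The plan is to combine the monotonicity from Corollary~\ref{dim-graph} with a matching upper bound produced by a triangle witness at the $t=2$ level. Corollary~\ref{dim-graph} already gives $\dim(I^{(2)}/I^2) \le \dim(I^{(t)}/I^t)$ for all $t \ge 2$, so it suffices to prove the reverse inequality. By Lemma~\ref{Lemma1}, I will exhibit, for each minimal monomial generator $x^\a \in \G(I^{(t)}) \setminus I^t$, a triangle $T$ of $G$ such that $\sqrt{I^t : x^\a} \supseteq \sqrt{I^2 : x_T}$. A direct monomial calculation shows
\[
\sqrt{I^2 : x_T} \;=\; (x_v : v \in N[T]) \;+\; I(G[V \setminus N[T]]),
\]
whose Stanley--Reisner dimension equals $\alpha(G[V \setminus N[T]]) \le \dim(I^{(2)}/I^2)$, so the desired bound follows.

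The selection of $T$ rests on the Strong Perfect Graph Theorem: in a perfect graph, every induced odd cycle is a triangle. An analogue of \cite[Theorem~3.4]{GHOS} in this setting yields a decomposition
\[
I^{(t)} \;=\; \sum x_{T_1}^{k_1}\cdots x_{T_m}^{k_m}\cdot I^{t - 2(k_1+\cdots+k_m)},
\]
where $T_1,\dots,T_m$ range over triangles of $G$ and $k_1,\dots,k_m$ are nonnegative integers with $2\sum k_i \le t$. For any $x^\a \in \G(I^{(t)}) \setminus I^t$ such a representation exists with $\sum k_i \ge 1$ (otherwise $x^\a \in I^t$), and I take $T = T_j$ to be any triangle appearing with $k_j \ge 1$; note that this forces $T \subseteq \supp(x^\a)$.

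The containment $\sqrt{I^t : x^\a} \supseteq \sqrt{I^2 : x_T}$ is then verified by an LP-style vertex-counting argument with two inputs. (i) For $v \in N[T]$, multiplying $x^\a$ by $x_v^{k_j}$ allows one to fit $k_j$ extra edges of $G$ joining $v$ to a vertex of $T$, which together with the $\lfloor 3k_j/2 \rfloor$ triangle edges available in $x_T^{k_j}$ and the $t - 2\sum k_i$ edges witnessing $x^\b \in I^{t - 2\sum k_i}$ give $x_v^{k_j} \cdot x^\a \in I^t$. (ii) For any edge $e$ of $G$, the inequality $x_T^{k_j} \in I^{\lfloor 3k_j/2 \rfloor}$ shows that $x_e^{\lceil k_j/2 \rceil} \cdot x^\a \in I^t$. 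Taken together, (i) and (ii) produce the desired inclusion, completing the proof.

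The main obstacle is the symbolic-power decomposition quoted above in the presence of overlapping triangles (as in $K_4$, or more generally graphs where many triangles share edges or vertices), which requires either an inductive argument leveraging the Lov\'asz characterization of perfectness via integrality of the stable-set polytope, or a careful extension of the single-cycle formula of \cite{GHOS}. The LP-style vertex-counting in steps (i) and (ii) is then a routine, though lengthy, case analysis, provided one carefully tracks how each triangle factor $x_{T_i}^{k_i}$ and the residual factor $x^\b$ contribute to the total edge count.
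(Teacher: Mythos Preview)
Your proposal has a genuine gap in the key structural step, and the gap is already visible in your own caveat about $K_4$.

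\medskip

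\textbf{The triangle decomposition is false.} The identity
\[
I^{(t)}\;=\;\sum x_{T_1}^{k_1}\cdots x_{T_m}^{k_m}\cdot I^{\,t-2(k_1+\cdots+k_m)}
\]
with the $T_i$ ranging over \emph{triangles} does not hold for perfect graphs in general. Take $G=K_4$ and $t=3$. Then $x_1x_2x_3x_4\in I^{(3)}\setminus I^3$ (each minimal prime $(x_j:j\ne i)$ contains three of the four variables), but $x_1x_2x_3x_4$ is not in $x_T\cdot I$ for any triangle $T\subseteq\{1,2,3,4\}$, since the leftover variable is a single $x_\ell\notin I$; and of course $x_1x_2x_3x_4\notin I^3$ by degree. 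So the right-hand side misses a minimal generator of $I^{(3)}$. The correct structural input for perfect graphs is Sullivant's theorem: minimal generators of $I^{(t)}$ are products $\prod_i x_{V_i}$ of \emph{cliques} $V_i$ with $\sum_i(|V_i|-1)=t$, and cliques of size $\ge 4$ genuinely occur. The single-odd-cycle formula of \cite{GHOS} does not extend in the triangle-only form you wrote down, and neither the Lov\'asz integrality route nor an ``inductive extension'' will produce it, because the formula is simply false.

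\medskip

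\textbf{The counting in step (i) also breaks.} Even granting a decomposition with several triangle factors, your edge count does not reach $t$. From $x_v^{k_j}x_{T_j}^{k_j}$ you correctly extract $2k_j$ edges, and $x^\b$ gives $t-2K$ edges with $K=\sum_i k_i$; but each remaining factor $x_{T_i}^{k_i}$ ($i\ne j$) supplies only $\lfloor 3k_i/2\rfloor$ edges, not the $2k_i$ you need. The total is $t-K+k_j+\sum_{i\ne j}\lfloor k_i/2\rfloor$, which falls short of $t$ as soon as some $k_i$ with $i\ne j$ equals $1$. So the containment $\sqrt{I^t:x^\a}\supseteq \sqrt{I^2:x_T}$ is not established by this argument.

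\medskip

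\textbf{What the paper does instead.} The paper avoids any global triangle decomposition. It takes $x^\a\in\G(I^{(t)})\setminus I^t$, writes it via Sullivant as a product of cliques $\prod_i x_{V_i}$, reduces (using the Ratliff condition $I^{t}:I^{r}=I^{t-r}$) to the case where every $|V_i|\ge 3$, and then proves the one-step identity
\[
\sqrt{I^t:x^\a}\;=\;\bigcap_{i\in\supp\a}\sqrt{I^{t-1}:x^{\a-\e_i}},
\]
using the membership criterion of \cite[Lemma~2.18]{MV2023}. This yields $\dim(I^{(t)}/I^t)\le\dim(I^{(t-1)}/I^{t-1})$, and together with Corollary~\ref{dim-graph} the induction closes. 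The point is that removing a single vertex from a clique of size $\ge 3$ keeps the remaining product inside $I^{(t-1)}$ (again by Sullivant), so the witnesses descend one step at a time; no direct jump to $t=2$ via a single triangle is needed or attempted.
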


\begin{proof} By the celebrated Strong Perfect Graph Theorem, $G$ does not contain any induced odd cycle of length $\ge 5$. Thus, either $G$ is a bipartite graph or $G$ contains induced triangles. We may assume that $G$ is not bipartite, as the conclusion becomes trivial otherwise. We will prove the assertion for $t \ge 2$ by induction. The statement is vacuously true for $t=2$. Suppose that $t\ge 3$ and, by Lemma \ref{Lemma1}, $\dim(I^{(t)}/I^{t})=\dim(\Delta_\a(I^{t}))+1=\dim(S/\sqrt{I^t:x^\a})$ for some $x^\a\in G(I^{(t)})\setminus I^{t}$. 
	
By \cite[Theorem 3.10]{Su2008}, we have
	$$x^\a=\prod_{i=1}^r x_{V_i}$$
	where $V_i$'s are cliques of $G$ with $|V_i|\ge 2$, $\sum_1^r(|V_i|-1)=t$ and, since $x^\a \not\in I^t$, there exist some values $i$ such that $|V_i|\ge 3$.
	
Observe first that we may reduce to the case where $|V_i|\ge 3$ for all $i=1,\ldots,r$. Indeed, by reindexing if necessary, we may assume that $|V_i| \ge 3$ for $i = 1, \dots, s$ and $|V_i| = 2$ for $i = s+1, \dots, r$. Rewrite $x^\a=x^\b\cdot x^\c$ where $x^\b=\prod_{i=1}^s x_{V_i}$ and
	$x^\c=\prod_{i=s+1}^r x_{V_i}$. By \cite[Lemma 2.12]{MV2012}, we have
	$$\bigcap_{x^\b \in I^{(s)} \setminus I^s} I^{t-(r-s)}:x^\b=\bigcap_{x^\b \in I^{(s)}\setminus I^s} \bigcap_{x^\c\in I^{r-s}} I^t:(x^\b \cdot x^\c) = \bigcap_{x^\a \in I^{(t)} \setminus I^t} I^t: x^\a.$$
	Thus, we can replace $x^\a$ by $x^\b$.
	
We proceed by establishing the following claim.
	
\noindent \textbf{Claim:} $\sqrt{I^t:x^\a}=\bigcap_{i\in \supp\a}\sqrt{I^{t-1}:x^{\a-\e_i}}.$
	
\noindent\textit{Proof of the Claim:} Let $x_F$ be a minimal (squarefree) monomial generator of $\sqrt{I^t:x^\a}$. We may assume that $F$ is an independent set of $G$, for if $F$ is not an independent set then $x_F \in I \subseteq \sqrt{I^{t-1} : x^\a}$ obviously holds. Set $x^\a(F)=\prod_{i\notin N[F]}x_i^{a_i}$. By  \cite[Lemma 2.18]{MV2023}, we have
	$$\sum_{j\in N(F)}a_j+\ord_{I}(x^\a(F))\ge t.$$. 
	
	Fix $i\in\supp \a$, and let $\b=\a-\e_i$. 
	If $i\in N(F)$ then $\sum_{j\in N(F)}b_j=\sum_{j\in N(F)}a_j-1$ and $x^\a(F)=x^\b(F)$. 
	On the other hand, if $i\notin N(F)$ then $\sum_{j\in N(F)}b_j=\sum_{j\in N(F)}a_j$ and $\ord_I(x^\b(F))\ge \ord_I(x^\a(F))-1$. Then, in both of cases, we always have 
	$$\sum_{j\in N(F)} b_j + \ord_{I} (x^\b(F))\ge t-1.$$ Using \cite[Lemma 2.18]{MV2023} again, we get $x_F\in\sqrt{I^{t-1}:x^\b}$. 
	
	Conversely, we assume that $x_F$ is a squarefree monomial generator of $\bigcap_{i\in \supp\a}\sqrt{I^{t-1}:x^{\a-\e_i}}$ and $F$ is an independent set of $G$. Then, there exists a positive integer $\alpha$ such that
	$$x_F^\alpha \cdot x^{\a-\e_i}\in I^{t-1}\quad\text{ for all } i\in\supp\a.$$
	
	If $N(F)\cap\supp(\a)\not= \varnothing$, then take $j \in N(F) \cap \supp(\a)$. We have
	$$x_F^{\alpha+1}x^\a=(x_F^\alpha\cdot x^{\a-\e_j})\cdot(x_Fx_j)\in I^t.$$
	This implies that $x_F\in\sqrt{I^t:x^\a}$.
	
	Suppose that $N(F)\cap\supp(\a)=\varnothing$. In this case, since $F$ is an independent set of $G$, $x^{\a-\e_i}$ must belong to $I^{t-1}$ for all $i\in\supp(\a)$. In particular, $\deg(x^\a)\ge 2t-1$. On the other hand, we have
	$$\deg(x^\a)=\sum_{i=1}^r|V_i|=t+r.$$
	Therefore $r\ge t-1$. But $|V_i|\ge 3$ for all $i$, it implies $$t=\sum_1^r(|V_i|-1)\ge 2r\ge 2(t-1),$$ which is a contradiction. The claim is now proved. \qed
	
	We continue with the proof of Theorem \ref{thm.perfect}. It follows from the previous claim that $$\dim(S/\sqrt{I^t:x^\a})=\dim(S/\sqrt{I^{t-1}:x^{\a-\e_i}})$$ for some $i\in\supp\a$. Moreover, since $|V_i|\ge 3$, \cite[Theorem 3.10]{Su2008} implies that $x^{\a-\e_i}\in I^{(t-1)}$. Since $\dim(S/\sqrt{I^{t-1}:x^{\a-\e_i}}) = \dim (S/\sqrt{I^t: x^\a}) \ge 0$, we also have $x^{\a-\e_i}\notin I^{t-1}$. It follows that $\dim(I^{(t)}/I^t)\le\dim(I^{(t-1)}/I^{t-1})$. This, together with Lemma \ref{Lemma1}, gives 
	$$\dim (I^{(t)}/I^t) = \dim (I^{(t-1)}/I^{t-1}).$$
	By induction, $\dim(I^{(t)}/I^t)=\dim(I^{(2)}/I^{2})$ as desired.
\end{proof}

The following example shows that $\dim(I^{(t)}/I^t)$ can take 2 different values that are not $-1$, and these 2 values can be arbitrarily far apart. We do not know of any examples where $\dim(I^{(t)}/I^t)$ takes 3 different values that are not $-1$.

\begin{example} \label{ex.2values}
	For any $r \ge 1$, let $S = \kk[w_1, \cdots, w_5, x_1, \cdots x_{2+r}]$ and consider 
	$$I=(w_1w_2,w_2w_3,w_3w_4,w_4w_5,w_5w_1,w_1x_1,w_1x_2,x_1x_2,x_2x_3,\cdots,x_2x_{2+r}) \subseteq S.$$
	Then, 
	$$\dim(I^{(t)}/I^t) = \left\{ \begin{array}{rl} -1 & \text{if } t = 1 \\
		1 & \text{if } t = 2 \\
		r & \text{otherwise.} \end{array} \right.$$
\end{example}

\begin{prob}
	Compute or bound $t(G)$ in terms of combinatorial data of $G$.
\end{prob}


\section{Cohen-Macaulay Symbolic-Ordinary Discrepancy Modules} \label{sec.sym/ord}

We continue our investigation of monomial ideal quotients of the form $I^{(t)}/I^t$, where $I$ is the edge ideal of a graph, and examine their Cohen-Macaulay property. Throughout this section, $G$ denotes a graph on the vertex set $V = [n]$, and $I = I(G)$ is its edge ideal. We also assume that $G$ is not a bipartite graph, i.e., $G$ contains induced odd cycles.

\begin{lemma}
	\label{lem.NC}
	Let $C = C_{2t+1}$ be an induced odd cycle of minimum length in $G$ on the vertices $\{1, \dots, 2t+1\}$. Then, for any $s \ge 1$, we have $x_C \cdot (x_1x_2)^{s-1} \in I^{(t+s)} \setminus I^{t+s}$ and 
	$$\sqrt{I^{t+s} : (x_C \cdot (x_1x_2)^{s-1})} = I + (N[C]).$$
\end{lemma}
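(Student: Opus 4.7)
The plan is to establish the lemma in three pieces: (i) $x_C \cdot (x_1x_2)^{s-1} \in I^{(t+s)}$, (ii) $x_C \cdot (x_1x_2)^{s-1} \notin I^{t+s}$, and (iii) the exact description of the radical of the colon ideal as $I + (N[C])$. Let $\a \in \NN^n$ be the exponent vector of $x_C \cdot (x_1x_2)^{s-1}$, so that $a_1 = a_2 = s$, $a_i = 1$ for $i \in \{3, \ldots, 2t+1\}$, and $a_i = 0$ otherwise. The crucial degree count is $\deg x^\a = 2(t+s) - 1$, which is strictly less than the minimal degree of any monomial in $I^{t+s}$.

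For (i), I would check that $\ord_{P_F}(x^\a) \ge t+s$ for every maximal independent set $F$ of $G$, using the primary decomposition $I^{(t+s)} = \bigcap_F P_F^{t+s}$. Because $C$ is an odd cycle of length $2t+1$ and $F \cap C$ is independent in $C$, we have $|F \cap C| \le t$; since $\{1,2\}$ is an edge, at most one of $1, 2$ lies in $F$. A brief case analysis on $|F \cap \{1,2\}|$ then yields $\ord_{P_F}(x^\a) \ge t+s$ in every case. For (ii), the degree count immediately shows that $x^\a$ cannot be divisible by a product of $t+s$ edges, since $I$ is generated in degree two.

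For (iii), the key lemma I would prove is $x_v \cdot x_C \in I^{t+1}$ for every $v \in N[C]$, by explicit edge decomposition. If $v \in C$, label $C$ cyclically as $v = v_0, v_1, \ldots, v_{2t}$ and use the $t+1$ cycle-edges $\{v_0,v_1\}, \{v_2,v_3\}, \ldots, \{v_{2t-2},v_{2t-1}\}, \{v_{2t},v_0\}$, whose product is $v_0 \cdot x_C$. If $v \in N(C) \setminus C$ is adjacent to some $w \in C$, place $w = v_0$ cyclically and combine the edge $\{v,v_0\}$ with the perfect matching $\{v_1,v_2\}, \{v_3,v_4\}, \ldots, \{v_{2t-1},v_{2t}\}$ of the remaining $2t$-vertex path $C \setminus \{w\}$. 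Multiplying by $(x_1 x_2)^{s-1} \in I^{s-1}$ then gives $x_v \cdot x^\a \in I^{t+s}$, so $(N[C]) \subseteq \sqrt{I^{t+s}:x^\a}$; the inclusion $I \subseteq \sqrt{I^{t+s}:x^\a}$ is automatic since $\sqrt{I^{t+s}} = \sqrt{I} = I$.

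The main obstacle is the reverse inclusion $\sqrt{I^{t+s}:x^\a} \subseteq I + (N[C])$. Here I would argue as follows: suppose a monomial $x^\beta$ lies in the radical but outside $I + (N[C])$. Not being in $(N[C])$ forces $\supp(\beta) \cap N[C] = \varnothing$; not being in $I$ forces $\supp(\beta)$ to be an independent set of $G$. From $(x^\beta)^k x^\a \in I^{t+s}$ one extracts a product of $t+s$ edges of $G$ dividing this monomial. Each such edge has both endpoints in $\supp(\beta) \cup C$; the independence of $\supp(\beta)$ and its disjointness from $N[C]$ rule out edges inside $\supp(\beta)$ and edges between $\supp(\beta)$ and $C$; since $C$ is induced, any edge of $G$ with both endpoints in $C$ is an edge of $C$ itself. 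Hence the edge product is supported entirely on $C$ and must therefore divide $x^\a$, contradicting $2(t+s) = \deg(\text{product}) > \deg x^\a = 2(t+s) - 1$. This forces $x^\beta \in I + (N[C])$ and completes the proof.
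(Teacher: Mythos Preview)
Your proof is correct and follows the same three-part skeleton as the paper (symbolic-power membership, ordinary-power non-membership by degree, and the two inclusions for the radical). The forward inclusion $I+(N[C])\subseteq\sqrt{I^{t+s}:x^\a}$ is handled the same way in both, via an explicit edge decomposition of $x_v\cdot x_C$.

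The substantive difference is in the reverse inclusion $\sqrt{I^{t+s}:x^\a}\subseteq I+(N[C])$. The paper invokes \cite[Lemma~2.18]{MV2023}, which for an independent set $F$ gives the inequality
\[
\sum_{j\in N(F)} a_j \;\ge\; (t+s)-\ord_I(x^\a)=1,
\]
forcing $N(F)\cap C\neq\varnothing$. Your argument instead is entirely self-contained: from $(x^\beta)^k x^\a\in I^{t+s}$ you extract $t+s$ edges, observe that independence of $\supp(\beta)$ and $\supp(\beta)\cap N[C]=\varnothing$ force every such edge to lie inside $C$, and then reach a degree contradiction $2(t+s)\le \deg x^\a=2(t+s)-1$. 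This is more elementary and avoids the external reference; the paper's route, on the other hand, is more uniform, since the same cited lemma drives the parallel computations in Lemmas~\ref{lem.NCy} and~\ref{lem.NCyz}. One minor remark: your appeal to ``$C$ induced'' in this step is actually superfluous---all you need is that each surviving edge has both endpoints in $C$, so its contribution to the product is supported on $C$; whether it is a cycle edge or a chord is irrelevant to the degree count. Similarly, for part~(i) the paper simply cites the standard fact $x_C\in I^{(t+1)}$, whereas you verify it directly via $\ord_{P_F}(x^\a)\ge t+s$ with the case split on $|F\cap\{1,2\}|$; both are fine.
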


\begin{proof}
	Clearly, since $x_C \in I^{(t+1)}$, $x_C \cdot (x_1x_2)^{s-1} \in I^{(t+s)}$. By a degree comparison, it can also be seen that $x_C \cdot (x_1x_2)^{s-1} \not\in I^{t+s}$.
	
	It is furthermore easy to see that if $y \in N[C]$ then $yx_C (x_1x_2)^{s-1} \in I^{t+s}$, i.e., $y \in I^{t+s} : (x_C \cdot (x_1x_2)^{s-1}).$ Therefore,
	$$\sqrt{I^{t+s} : (x_C \cdot (x_1x_2)^{s-1})} \supseteq I + (N[C]).$$
	On the other hand, let $x_F$ be a minimal monomial generator of $\sqrt{I^{t+s}: (x_C \cdot (x_1x_2)^{s-1})}$. Then, either $x_F \in I$ or $F$ is an independent set of $G$. In the case that $F$ is an independent set, by \cite[Lemma 2.18]{MV2023}, we have
	$$\sum_{j \in N(F)} \ord_{x_j}(x_C \cdot (x_1x_2)^{s-1}) \ge (t+s) - \ord_I (x_C \cdot (x_1x_2)^{s-1}) = (t+s) - (t+s-1) = 1.$$
	Hence, $N(F) \cap C \not= \varnothing$, i.e., $F \cap N[C] \not= \varnothing$. In other words, $x_F \in (N[C])$, and we deduce that 
	$$\sqrt{I^{t+s} : (x_C \cdot (x_1x_2)^{s-1})} \subseteq I + (N[C]).$$
	The lemma is established.
\end{proof}

\begin{lemma}
	\label{lem.NCy}
	Assuming the same hypothesis as in Lemma \ref{lem.NC}. Suppose that $y \in N[C] \ C$ and, in particular, $y \in N[1]$. Then, for any $s \ge 2$, we have $x_C \cdot (x_1x_2)^{s-2}(x_1y) \in I^{(t+s)} \setminus I^{t+s}$ and
	$$\sqrt{I^{t+s} : (x_C \cdot (x_1x_2)^{s-2} (x_1y))} = I + (N[C]) + (N(y)).$$
\end{lemma}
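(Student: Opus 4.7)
The plan is to mirror the proof of Lemma~\ref{lem.NC}, adjusting the book-keeping to accommodate the extra vertex $y$. Set $m := x_C\cdot(x_1x_2)^{s-2}(x_1y)$. Note $\supp(m)=C\cup\{y\}$, $\deg m = 2t+2s-1$, and the $I$-order of $m$ is exactly $t+s-1$: one gets $\ge t+s-1$ by writing $m = x_C\cdot (x_1x_2)^{s-2}\cdot(x_1y)$ and using $x_C\in I^t$, and one cannot exceed $t+s-1$ by the degree bound. In particular, $m\in I^{(t+s)}\setminus I^{t+s}$, since $x_C\in I^{(t+1)}$ and $y\in N(1)$ give $x_1y\in I$, while $\deg m<2(t+s)$.

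For the inclusion $I+(N[C])+(N(y))\subseteq\sqrt{I^{t+s}:m}$, I would treat the three families of generators separately. The containment $I\subseteq\sqrt{I^{t+s}:m}$ is automatic. For $z\in N[C]$, the argument in Lemma~\ref{lem.NC} already shows $z\cdot x_C\in I^{t+1}$ (split into the subcases $z\in C$ and $z\in N[C]\setminus C$, using the odd cycle structure), so
\[
z\cdot m = (z\cdot x_C)\cdot (x_1x_2)^{s-2}\cdot (x_1y)\in I^{t+1}\cdot I^{s-2}\cdot I = I^{t+s}.
\]
For $z\in N(y)$, use the factorisation $z\cdot m=(yz)\cdot(x_1x_2)^{s-2}\cdot(x_C\cdot x_1)$; since $\{1,2t+1\}$ is an edge of $C$, covering $C$ by the edges $\{1,2\},\{3,4\},\dots,\{2t-1,2t\},\{1,2t+1\}$ shows $x_C\cdot x_1\in I^{t+1}$, whence $z\cdot m\in I\cdot I^{s-2}\cdot I^{t+1}=I^{t+s}$.

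For the reverse inclusion, let $x_F$ be a minimal squarefree generator of $\sqrt{I^{t+s}:m}$ with $x_F\notin I$; then $F$ is independent in $G$. I would argue by contradiction, assuming $F\cap N[C]=\varnothing$ and $F\cap N(y)=\varnothing$. Because $y\in N[C]$, the first condition forces $y\notin F$, and together the two conditions force $N[F]\cap(C\cup\{y\})=\varnothing$. Consequently, using the notation of \cite[Lemma~2.18]{MV2023}, we have $m(F)=m$ and $\sum_{j\in N(F)}\ord_{x_j}(m)=0$. That lemma then requires
\[
\sum_{j\in N(F)}\ord_{x_j}(m) + \ord_I(m(F)) = 0 + (t+s-1) \;\ge\; t+s,
\]
which is false, yielding the required contradiction.

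The proof is essentially a refinement of Lemma~\ref{lem.NC}; the only genuinely delicate point is the $I$-order computation $\ord_I(m)=t+s-1$, which must be \emph{exactly} one less than $t+s$ for the application of \cite[Lemma~2.18]{MV2023} to pinch off all independent sets disjoint from $N[C]\cup N(y)$. The main obstacle will therefore be verifying the two equalities that pin down this order, namely that $m\notin I^{t+s}$ (clear from degree) and $m\in I^{t+s-1}$ (explicit covering by $t+s-1$ edges), and checking that the covering argument for $x_C\cdot x_1\in I^{t+1}$ genuinely uses the cycle edge $\{1,2t+1\}$; everything else is routine.
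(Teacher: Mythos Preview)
Your proof is correct and follows essentially the same approach as the paper's: both establish the membership $m\in I^{(t+s)}\setminus I^{t+s}$ by a degree count, verify the $\supseteq$ inclusion by exhibiting explicit edge-coverings, and obtain the reverse inclusion by applying \cite[Lemma~2.18]{MV2023} to an independent $F$ to force $F$ to meet $N[C]\cup N(y)$. The paper merely sketches this by saying ``the proof is almost identical to that of Lemma~\ref{lem.NC}''; you have supplied the details, including the precise computation $\ord_I(m)=t+s-1$ and the edge-covering $x_C\cdot x_1\in I^{t+1}$ via the cycle edge $\{1,2t+1\}$, which the paper leaves implicit.
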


\begin{proof}
	The proof is almost identical to that of Lemma \ref{lem.NC}. As before, we first observe that $x_C \cdot (x_1x_2)^{s-2}(x_1y) \in I^{(t+s)} \setminus I^{t+s}$. It is also easy to see that 
	$$\sqrt{I^{t+s} : (x_C \cdot (x_1x_2)^{s-2} (x_1y))} \supseteq I + (N[C]) + (N(y)).$$
	To see the reserve inclusion, we again consider a minimal squarefree monomial generator $x_F$ of $\sqrt{I^{t+s} : (x_C \cdot (x_1x_2)^{s-2} (x_1y))}$. We may further assume that $F$ is an independent set in $G$ and that $F \cap N[C] = \varnothing.$ By the same argument, making use of \cite[Lemma 2.18]{MV2023}, we then get that $F \cap N(y) \not= \varnothing$. This establishes the desired reverse inclusion and proves the lemma.
\end{proof}

\begin{lemma} \label{lem.NCyz}
	Assuming the same hypotheses as in Lemma \ref{lem.NCy}. Suppose that $z \in N(y) \setminus N[C]$. Then, for $s \ge 3$, we have $x_C \cdot (x_1x_2)^{s-3}(x_1y)(yz) \in I^{(t+s)} \setminus I^{t+s}$ and 
	$$\sqrt{I^{t+s} : (x_C \cdot (x_1x_2)^{s-3}(x_1y)(yz))} = I + (N[C]) + (N(y,z)).$$
\end{lemma}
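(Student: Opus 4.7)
The plan is to follow the template of the preceding two lemmas. Set $M' := x_C \cdot (x_1x_2)^{s-3}(x_1y)(yz)$. We verify three assertions in turn: (i) $M' \in I^{(t+s)} \setminus I^{t+s}$; (ii) the forward inclusion $I + (N[C]) + (N(\{y,z\})) \subseteq \sqrt{I^{t+s}:M'}$; and (iii) the reverse inclusion. The hypotheses ensure that $(x_1 y)$ and $(y z)$ are edges of $G$, and $(x_1 x_2)$ is an edge of $C$, so every factor of $M'$ is meaningful.

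For (i), $M' \in I^{(t+s)}$ follows from $x_C \in I^{(t+1)}$ and $(x_1x_2)^{s-3}(x_1y)(yz) \in I^{s-1}$; strictness is a degree check, since $\deg(M') = (2t+1)+2(s-3)+2+2 = 2(t+s)-1 < 2(t+s)$. For (ii), $I \subseteq \sqrt{I^{t+s}:M'}$ is automatic. For $w \in N[C]$, use $w\cdot x_C \in I^{t+1}$ (the matching-on-$C_{2t+1}$ argument already employed in the proof of Lemma~\ref{lem.NC}) to get $w M' = (w x_C)(x_1x_2)^{s-3}(x_1y)(yz) \in I^{t+1}\cdot I^{s-1} = I^{t+s}$. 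For $w \in N(y)$, regroup $w M' = (wy)\cdot(x_1 x_C)\cdot(x_1x_2)^{s-3}\cdot(yz)$ and invoke $x_1 x_C \in I^{t+1}$; for $w \in N(z)$, regroup $w M' = (wz)\cdot(y x_C)\cdot(x_1x_2)^{s-3}\cdot(x_1 y)$ and invoke $y x_C \in I^{t+1}$, the latter because $y$ is adjacent to vertex $1$ of $C$, so a matching of $C_{2t+1}$ omitting vertex $1$ together with the edge $\{1,y\}$ yields $t+1$ edges whose product is $y \cdot x_C$.

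For (iii), let $x_F$ be a minimal (squarefree) monomial generator of $\sqrt{I^{t+s}:M'}$. If $F$ is not independent in $G$, then $x_F \in I$; otherwise $F$ is independent, and \cite[Lemma 2.18]{MV2023} yields
\[
\sum_{j\in N(F)} \ord_{x_j}(M') \;\ge\; (t+s) - \ord_I(M').
\]
The key numerical input is $\ord_I(M') = t+s-1$. The upper bound is immediate from $\deg(M') = 2(t+s)-1$. The lower bound is achieved by the explicit decomposition consisting of $s-3$ copies of $(x_1 x_2)$, the edges $(x_1 y)$ and $(y z)$, and a maximum matching of $C_{2t+1}$ that omits vertex $1$; a direct bookkeeping check shows that the resulting total multiplicities ($x_1^{s-2}$, $x_2^{s-2}$, $x_3 \cdots x_{2t+1}$, $y^2$, $z$) all divide $M'$. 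Hence $\sum_{j\in N(F)} \ord_{x_j}(M') \ge 1$, so $N(F)$ meets $\supp(M') = C \cup \{y,z\}$. This forces $F \cap N(C \cup \{y,z\}) = F \cap (N(C) \cup N(y) \cup N(z)) \neq \varnothing$, and since $N(C) \subseteq N[C]$, we conclude $x_F \in (N[C]) + (N(\{y,z\}))$.

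The main obstacle, and what sets this lemma apart from its two predecessors, is the order computation $\ord_I(M')$: both the additional edge $(yz)$ and the extra copy of $x_1$ coming from $(x_1 y)$ must be absorbed into the edge decomposition while still extracting a matching of size $t$ from $C_{2t+1}$. The bookkeeping works precisely because the matching on $C$ can be chosen to omit vertex $1$, keeping the $x_1$-multiplicity at $s-2 \le s-1$; this is the one step where the proof is genuinely more delicate than in Lemmas~\ref{lem.NC} and~\ref{lem.NCy}.
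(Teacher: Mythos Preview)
Your proof is correct and follows exactly the template the paper intends: the paper's own ``proof'' of this lemma simply states that the argument is identical to Lemmas~\ref{lem.NC} and~\ref{lem.NCy} and leaves the details to the reader, and you have supplied precisely those details (membership and degree count for part~(i), explicit regroupings for the forward inclusion, and the \cite[Lemma~2.18]{MV2023} bound with $\ord_I(M')=t+s-1$ for the reverse inclusion). Your final paragraph slightly overstates the delicacy of the order computation---the paper evidently regards it as routine---but the bookkeeping you give is accurate and the argument is complete.
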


\begin{proof}
	The proof follows in exactly the same of arguments as in Lemmas \ref{lem.NC} and \ref{lem.NCy}. The details are left to the interested reader.
\end{proof}

Recall from Remark \ref{rmk.dim} that $\dim (I^{(t)}/I^t)$ is asymptotically a constant. As before, let $t(G)$ denote the smallest integer starting from which $\dim (I(G)^{(t)}/I(G)^t)$ is a constant. Also, let $c(G) \in \NN$ be such that $2c(G)+1$ is the maximum length of an induced cycle in $G$ ($c(G) = 0$ if $G$ is a bipartite graph). Our last main result is stated as follows.

\begin{theorem}[Cohen-Macaulay symbolic-ordinary discrepancy modules] \label{thm.CMEdge}
	Let $G$ be a connected graph that is not bipartite and let $I = I(G)$ be its edge ideal.
	Set
	\[
	t_0:=\max\{t(G),c(G)\}+3.
	\]
	The following are equivalent:
	\begin{enumerate}
		\item[(1)] $I^{(t)}/I^{t}$ is Cohen-Macaulay for all $t \ge 1$;
		\item[(2)] $I^{(t)}/I^{t}$ is Cohen-Macaulay for some $t\ge t_0$;
		\item[(3)] For all $t \ge 1$, either $I^{(t)}/I^t = (0)$ or $\dim(I^{(t)}/I^t) = 0$;
		\item[(4)] For any induced odd cycle $C$ in $G$, we have $N[C] = [n]$.
	\end{enumerate}
\end{theorem}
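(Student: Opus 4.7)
The plan is to close the cycle (1) $\Rightarrow$ (2) $\Rightarrow$ (4) $\Rightarrow$ (3) $\Rightarrow$ (1). The implication (1) $\Rightarrow$ (2) is trivial, and (3) $\Rightarrow$ (1) is immediate because any module of dimension at most $0$ is automatically Cohen-Macaulay (either zero or of finite length, in which case depth equals dimension).

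For (3) $\Leftrightarrow$ (4), the direction (3) $\Rightarrow$ (4) follows by contrapositive from Lemma~\ref{lem.NC}: if some induced odd cycle $C$ of length $2r+1$ has $N[C]\neq[n]$, then for any $t=r+s$ with $s\ge 1$, the monomial $x^\a := x_C(x_1x_2)^{s-1}$ lies in $I^{(t)}\setminus I^t$ with $\sqrt{I^t:x^\a}=I+(N[C])$, so Lemma~\ref{Lemma1} gives $\dim(I^{(t)}/I^t)\ge \alpha(G[V\setminus N[C]])\ge 1$, contradicting (3). For (4) $\Rightarrow$ (3), I would combine \cite[Lemma 2.18]{MV2023} with a structural description of monomials in $I^{(t)}\setminus I^t$, in the spirit of the clique decomposition used in the proof of Theorem~\ref{thm.perfect}: under the hypothesis that every induced odd cycle is dominating, one shows $\sqrt{I^t:x^\a}=\mathfrak{m}$ for every $x^\a\in I^{(t)}\setminus I^t$, so Lemma~\ref{Lemma1} yields $\dim(I^{(t)}/I^t)\le 0$.

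The heart of the proof is (2) $\Rightarrow$ (4). Assume for contradiction that $I^{(t)}/I^t$ is Cohen-Macaulay for some $t\ge t_0$, while some induced odd cycle $C$ of length $2r+1\le 2c(G)+1$ satisfies $N[C]\neq[n]$. Connectedness of $G$ forces the existence of $y\in N[C]\setminus C$ (say adjacent to vertex $1$ of $C$) and $z\in N(y)\setminus N[C]$, because any edge from $N[C]$ into its complement must originate in $N[C]\setminus C$ (every vertex of $C$ has all its neighbors in $N[C]$). With $s:=t-r\ge 3$, which is possible since $t\ge c(G)+3\ge r+3$, consider
\[
x^\a := x_C(x_1x_2)^{s-1}, \qquad x^\b := x_C(x_1x_2)^{s-3}(x_1y)(yz),
\]
both in $I^{(t)}\setminus I^t$ by Lemmas~\ref{lem.NC} and~\ref{lem.NCyz}, with radicals $I+(N[C])$ and $I+(W)$ respectively, where $W:=N[C]\cup N(y)\cup N(z)$. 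Thus $\Delta_\a(I^t)$ is the independence complex of $G[V\setminus N[C]]$ and $\Delta_\b(I^t)$ is the independence complex of $G[V\setminus W]$.

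Set $d:=\dim(I^{(t)}/I^t)$. Lemmas~\ref{Lemma1} and~\ref{Lemma2} applied to $x^\a$ give $d=\alpha(G[V\setminus N[C]])\ge 1$ with $\Delta_\a(I^t)$ Cohen-Macaulay of dimension $d-1$, and applied to $x^\b$ they force $\alpha(G[V\setminus W])=d$. The key combinatorial estimate is as follows: since $z\in N(y)\subseteq W$ and $N(z)\cap(V\setminus N[C])\subseteq W$, we have $V\setminus W\subseteq V\setminus N[C]\setminus N[z]$; and for any independent set $S\subseteq V\setminus N[C]\setminus N[z]$, the set $S\cup\{z\}$ remains independent in $G[V\setminus N[C]]$ (since every neighbor of $z$ in $V\setminus N[C]$ has been excluded from $S$), so $|S|+1\le\alpha(G[V\setminus N[C]])=d$. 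Hence $\alpha(G[V\setminus W])\le d-1$, contradicting $\alpha(G[V\setminus W])=d$. The main obstacle I foresee is the clean execution of (4) $\Rightarrow$ (3), which requires a refined description of $I^{(t)}\setminus I^t$ relating its defect to a specific dominating odd cycle; the remaining implications follow cleanly from Lemmas~\ref{Lemma1} and~\ref{Lemma2} together with the combinatorial estimate above.
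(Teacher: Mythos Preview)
Your argument for $(2)\Rightarrow(4)$ is essentially identical to the paper's: the same two test monomials $x_C(x_1x_2)^{s-1}$ and $x_C(x_1x_2)^{s-3}(x_1y)(yz)$, the same appeal to Lemmas~\ref{Lemma1} and~\ref{Lemma2} to force both degree complexes to have dimension exactly $d-1$, and the same combinatorial punch line that adjoining $z$ to a maximum independent set outside $W$ bumps the independence number in $V\setminus N[C]$ by one. Your justification for the existence of the edge $yz$ via connectedness is in fact a bit more carefully worded than the paper's. The implications $(1)\Rightarrow(2)$ and $(3)\Rightarrow(1)$ are handled the same way, and your extra observation $(3)\Rightarrow(4)$ via Lemma~\ref{lem.NC} is correct, though not needed for the cycle.

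The genuine gap is $(4)\Rightarrow(3)$. You propose to show directly that $\sqrt{I^t:x^{\a}}=\mathfrak m$ for every $x^{\a}\in I^{(t)}\setminus I^t$ by combining \cite[Lemma~2.18]{MV2023} with a clique-type decomposition ``in the spirit of'' Theorem~\ref{thm.perfect}. But that decomposition comes from \cite[Theorem~3.10]{Su2008}, which is specific to perfect graphs; for an arbitrary non-bipartite $G$ there is no such clean description of the minimal generators of $I^{(t)}$, and you yourself flag this as the main obstacle without supplying an argument. The paper takes a completely different route here: it invokes \cite[Theorem~4.4]{LT2019}, which says that every embedded prime of $I^t$ has the form $P_F$ with $N[U]\subseteq F$ for some $U$ whose induced subgraph $G_U$ is (strongly) non-bipartite. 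Under hypothesis~(4), any induced odd cycle $C_1\subseteq G_U$ satisfies $N[C_1]=[n]$, hence $F=[n]$ and $P_F=\mathfrak m$. Thus either $I^t=I^{(t)}$, or $I^t=I^{(t)}\cap\mathfrak q$ with $\sqrt{\mathfrak q}=\mathfrak m$, giving $I^{(t)}/I^t\cong (I^{(t)}+\mathfrak q)/\mathfrak q$ of dimension~$0$. This embedded-prime argument is the missing ingredient; without it (or a genuine substitute), your cycle of implications does not close.
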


\begin{proof} It is obviously that $(3) \Longrightarrow (1) \Longrightarrow (2)$. We shall prove $(2)\Longrightarrow (4) \Longrightarrow (3)$. 
	
$(2) \Longrightarrow (4)$. Suppose that $M=I^{(t)}/I^{t}$ is Cohen-Macaulay for some $t\ge \max\{t(G),c(G)\}+3$. Let $\alpha=\dim(M)$. By \cite[Theorem 4.1]{CMS2002} and \cite[Corollary 4.5]{LT2019}, $M \not= 0$, and so $\alpha \ge 0$. Using Lemma \ref{Lemma2}, we get that $\Delta_\a(I^{t})$ is Cohen-Macaulay of dimension $\alpha-1$ for any $x^\a\in I^{(t)}\setminus I^{t}$. It then follows from Lemma \ref{Lemma1} that $S/\sqrt{I^{t}: x^\a}$ is Cohen-Macaulay of dimension $\alpha$ for all $x^\a \in I^{(t)} \setminus I^{t}$.
	
Consider any induced odd cycle $C$ in $G$ and, without loss of generality, assume that the vertices of $C$ are $\{1,2,\ldots,2t'+1\}$, for some $t' \le c(G)$. Write $t = t'+s$, for some $s \ge 3$. Suppose, by contradiction, that $[n] \setminus N[C] \not= \varnothing$. Then, by the connectivity of $G$, there exist vertices $y,z$ of $G$ such that 
$y \in N[C] \setminus C$, $z \not\in N[C]$, and $yz \in G$. Without loss of generality, we may assume that $y \in N(x_1)$. Set
\begin{align*}
	x^\b & = x_C \cdot (x_1x_2)^{s-1} \\
	x^\c & = x_C \cdot (x_1x_2)^{s-3}(x_1y)(yz).
\end{align*}
Since $x_C \in I^{(t'+1)}$, both $x^\b$ and $x^\c$ are in $I^{(t'+s)}$. Moreover, by a degree comparison, it can be seen that both $x^\b$ and $x^\c$ are not in $I^{t'+s}$. Thus, by Lemmas \ref{lem.NC} and \ref{lem.NCyz}, we have
$$\dim\left(S\big/I + (N[C])\right) = \dim \left(S\big/I + (N[C]) + (N(y)) + (N(z))\right) = \alpha.$$

On the other hand, observe that if $F$ is an independent set of the graph associated to $I + (N[C]) + (N(y)) + (N(z))$ with maximum cardinality $\alpha$, then $z \not\in F$ and $F \cup \{z\}$ is an independent set in the graph associated to $I + (N[C])$. This implies that
$$\dim \left(S\big/I + (N[C])\right) \ge \alpha+1 > \alpha,$$
a contradiction. We have shown that $N[C] = [n]$ for any induced odd cycle $C$ in $G$.

$(4) \Longrightarrow (3)$. Suppose that $N[C] = [n]$ for any induced odd cycle $C$ in $G$. It follows from \cite[Theorem 4.4]{LT2019} that, for any $t \in \NN$, either $I^t$ has no embedded primes, or the embedded primes of $I^t$ are of the form $P_F = (x_i ~\big|~ x_i \in F)$, for some $F \subseteq V$, such that there exists $U \subseteq V$ for which $N[U] \subseteq F$ and the induced subgraph $G_U$ of $G$ over $U$ is a \emph{strongly non-bipartite} graph (under additional conditions). Since $G_U$ is a non-bipartite graph, it contains an induced odd cycle $C_1$. Particularly, $N[U] \supseteq N[C_1] = [n]$, which implies that $F = [n]$ and $P_F = \mm$. That is, for any $t \in \NN$, either $I^t = I^{(t)}$ or the only embedded prime of $I^t$ is $\mm$. 
In the latter case we have $I^t = I^{(t)} \cap \qq$, where $\sqrt{\qq} = \mm$. Therefore,
$$\dim (I^{(t)}/I^t) = \dim \dfrac{I^{(t)}}{I^{(t)} \cap \qq} = \dim \dfrac{I^{(t)} + \qq}{\qq}  = 0.$$
The result is proved.
\end{proof}

\begin{remark}\label{rmk:gCMQt}
	Let $G$ be a simple connected graph on $[n]$ and let $I=I(G)$ its edge ideal. Fix $t\ge1$ and set
	\[
	Q_t(G)\ :=\ I^{(t)}/I^t.
	\]
	Assume $\dim Q_t(G)\ge2$. Then $Q_t(G)$ is generalized Cohen-Macaulay if and only if it is Cohen-Macaulay on the punctured spectrum, i.e., if and only if $(Q_t(G))_{x_F}$ is Cohen-Macaulay for every nonempty monomial $x_F$.
	
	Equivalently, it suffices to test $x_F=\prod_{i\in F}x_i$ with $F$ a nonempty independent set of $G$ (i.e., $\varnothing \not= F\in \text{Ind}(G)$). In this case one has a natural isomorphism
	\[
	(Q_t(G))_{x_F}\ \cong\ \kk[x_i^{\pm1}:i\in F]\ \otimes_\kk\ Q_t\bigl(G-N_G[F]\bigr).
	\]
	Hence, $Q_t(G)$ is generalized Cohen-Macaulay if and only if $Q_t(G-N_G[F])$ is Cohen-Macaulay for every nonempty independent set $F$. By \Cref{thm.CMEdge}, this is equivalent to the condition that, for any independent set $F$ in $G$ and any odd induced cycle $C$ in $G - N_G[F]$, $N_G[C] \cup N_G[F] = [n]$.
\end{remark}

We end the paper with the following questions.

\begin{quest} Let $I$ be a monomial ideal.
	\begin{enumerate}
		\item Characterize the Cohen-Macaulayness of $I^{(t)}/I^t$, for all $t \ge 1$.
		\item If $I^{(t)}/I^t$ is Cohen-Macaulay for $t = u$ and $t = v$, for some $u < v$, then does it imply that $I^{(t)}/I^t$ is Cohen-Macaulay for all $u \le t \le v$?
		\item Which functions could be the depth/regularity function $\depth (I^{(t)}/I^t)$ (or $\reg (I^{(t)}/I^t$))?
	\end{enumerate} 
\end{quest}

\subsection*{Acknowledgements}

Part of this work was done while the authors were visiting Vietnam Institute for Advanced Study in Mathematics (VIASM).  We would like to thank VIASM for its hospitality and financial support.

\subsection*{Data Availability} Data sharing is not applicable to this article as no datasets were generated or analyzed during the current study.


\subsection*{Competing interests} The authors have no relevant financial or non-financial interests to disclose.



	\end{document}